\newtheoremstyle{myremark}     {10pt}{10pt}{}{}{\bfseries}{.}{.5em}{}
\newtheorem{theorem}{Theorem}
\newcounter{tmp}
\newtheorem{proposition}[theorem]{Proposition}
\newtheorem{lemma}[theorem]{Lemma}
\newtheorem{coro}[theorem]{Corollary}
\newtheorem{remark}[theorem]{Remark}
\newtheorem*{theorem*}{Theorem}
\newtheorem*{proposition*}{Proposition}
\newtheorem{question}{Question}
\newcommand{\Z}{\mbox{$\mathbb Z$}}
\newcommand{\N}{\mbox{$\mathbb N$}}     
\newcommand{\R}{\mbox{$\mathbb R$}}     
\newcommand{\C}{\mbox{$\mathbb C$}}     
\newcommand{\norm}[1]{\left\Vert#1\right\Vert}
\begin{document}
	
	\title{Poisson boundary on full Fock space}
	
	\author[Bhat]{B.V. Rajarama Bhat}
	\address{Indian Statistical Institute, Stat-Math Unit, R V College Post, Bengaluru 560059, India}
	\email{bhat@isibang.ac.in}

	\author[Bikram]{Panchugopal Bikram}
	\address{School of Mathematical Sciences,
		National Institute of Science Education and Research,  Bhubaneswar, An OCC of Homi Bhabha National Institute,  Jatni- 752050, India}
	\email{bikram@niser.ac.in}

	\author[De]{Sandipan De}
	\address{Sandipan De, School of Mathematics and Computer Science, Indian Institute of Technology Goa,
		Farmagudi, Ponda-403401, Goa, India.}
	\email{sandipan@iitgoa.ac.in, 444sandipan@gmail.com}

	\author[Rakshit]{ Narayan Rakshit}
	\address{Indian Statistical Institute, Stat-Math Unit, R V College Post, Bengaluru 560059, India}
	\email{narayan753@gmail.com}
	\keywords{Poisson boundary, von Neumann algebra, Fock space}
	\subjclass[2020]{Primary  46L10,  46L36 ; Secondary 46L40, 46L53.}
	
	\maketitle
	
	\newenvironment{dedication}{}{}
	\begin{dedication}
		\begin{center}
			\textit{Dedicated to Prof. V.S. Sunder}
		\end{center}
	\end{dedication}

	\begin{abstract} 
		This article is devoted to studying the non-commutative Poisson boundary associated with $\Big(B\big(\mathcal{F}(\mathcal{H})\big), P_{\omega}\Big)$
		where $\mathcal{H}$ is a separable Hilbert space (finite or infinite-dimensional), $\dim \mathcal{H} > 1$, with an orthonormal basis $\mathcal{E}$, $B\big(\mathcal{F}(\mathcal{H})\big)$ is the algebra of bounded linear operators on the full Fock space $\mathcal{F}(\mathcal{H})$ defined over $\mathcal{H}$, $\omega = \{\omega_e : e \in \mathcal{E} \}$ is a sequence of strictly positive real numbers such that $\sum_e \omega_e = 1$ and $P_{\omega}$ is the Markov operator  on $B\big(\mathcal{F}(\mathcal{H})\big)$ defined by
		\begin{align*}
			P_{\omega}(x) = \sum_{e \in \mathcal{E}} \omega_e l_e^* x l_e, \  x \in B\big(\mathcal{F}(\mathcal{H})\big),
		\end{align*}
		where, for $e \in \mathcal{E}$, $l_e$ denotes the left creation operator associated with $e$.
		We observe that the non-commutative Poisson boundary associated with $\Big(B\big(\mathcal{F}(\mathcal{H})\big), P_{\omega}\Big)$ is $\sigma$-weak closure of the Cuntz algebra $\mathcal{O}_{\dim \mathcal{H}}$ generated by the right creation operators. We prove that the Poisson boundary is an injective factor of type $III$ for any choice of $\omega$. Moreover, if $\mathcal{H}$ is finite-dimensional, we completely classify the Poisson boundary in terms of	its Connes' $S$ invarinat and curiously they are type $III _{\lambda }$ factors with $\lambda$ belonging to a certain small class of algebraic numbers.

	\end{abstract}

	
	\section{introduction}
	This article is dedicated to studying the non-commutative Poisson boundary associated to a certain unital completely positive (henceforth, abbreviated UCP) normal map on the algebra of bounded linear operators on the full Fock space over a separable Hilbert space.
	Given a  normal UCP map on a von Neumann algebra to itself, one can equip the fixed point set with an abstract von Neumann algebra structure called non-commutative Poisson boundary.

	Let us elucidate the notion of non-commutative Poisson boundary in more detail. Given a von Neumann algebra $N$ and an operator system $L$ in $N$ (that is, $L$ is a self-adjoint linear subspace of $N$ containing the identity), it is known that (see \cite[Theorem 3.1]{ChEf77}, \cite[Theorem 2.6]{Arv}) if there exists a completely positive projection
	$E: N \rightarrow N$ with image $E(N) = L$, then $L$ becomes a $C^*$-algebra with respect to the multiplication given by 
	\begin{align*}
		x \circ y = E(xy), \ x, y \in L,
	\end{align*}
	(which we call the  Choi-Effros product).

	Let $P$ be a normal UCP map from $N$ to itself. Such a map is called a non-commutative Markov operator. An element $x \in N$ is said to be $P$-harmonic if $P(x) = x$. We denote by $H^{\infty}(N, P)$ the set of all $P$-harmonic elements of $N$, that is,
	\begin{align*}
		H^{\infty}(N, P) = \{x \in N: P(x) = x\}.
	\end{align*}
	Then $H^{\infty}(N, P)$ is a $\sigma$-weakly closed operator system and it is the image of a completely positive projection of $N$. Indeed, Izumi showed in \cite[Theorem 3.3]{Iz2} that if we choose
	a free ultrafilter $\kappa \in \beta \mathbb{N} \setminus \mathbb{N}$, and define $E : N \rightarrow H^{\infty}(N, P)$ by
	\begin{align*}
		E(x) = \lim_{n \rightarrow \kappa} \frac{1}{n}\sum_{k = 0}^{n-1} P^k(x), \ x \in N,
	\end{align*}
	where the limit is taken in the weak operator topology, then $E$ is the desired projection and the Choi-Effros product equips $H^{\infty}(N, P)$ with a $C^*$-algebra structure. As $H^{\infty}(N, P)$ is a $\sigma$-weakly closed operator system, it is isometrically isomorphic to the dual of a Banach space and hence, from a theorem of Sakai \cite{Sakai} it follows that the $C^*$-algebra $H^{\infty}(N, P)$ can be represented faithfully as a von Neumann algebra. We call the resulting von Neumann algebra the non-commutative Poisson boundary of $(N, P)$. Although $E$ depends on the choice of the free ultrafilter $\kappa$, the Choi-Effros product of $H^{\infty}(N, P)$ does not depend on it,  because an operator system may have at most one von Neumann algebra structure.

	
	It was pointed out by W. Arveson that the non-commutative Poisson
	boundary for $P$ is identified with the fixed point algebra of the minimal dilation of $P$. To be more precise, let $(M, \alpha, p)$ denote the minimal dilation of $(N, P)$ where $M$ is a von Neumann algebra, $p$ is a projection in $M$ such that the central carrier of $p$ is $1_M$, and $\alpha$ is a unital normal endomorphism of $M$ such that $N = pMp$, $M$ is generated by $\bigcup_{n \geq 0} \alpha^n(N)$, and $P^n(a) = p \alpha^n(a) p$ for all $a \in N$ and $n \geq 1$. Izumi proved in \cite[Theorem 5.1]{Iz3} that the map 
	\begin{align*}
		&\theta: x \in M^{\alpha}:= \{x \in M: \alpha(x) = x\} \mapsto pxp \in H^{\infty}(N, P)
	\end{align*}
	is a completely positive order isomorphism between the two operator systems. In particular, the von Neumann algebra
	$M^{\alpha}$ gives a realization of the von Neumann algebra structure of $H^{\infty}(N, P)$. One of the useful consequences of this dilation theoretic approach is the following result \cite[Corollary 5.2]{Iz3} which we shall use frequently in the sequel to compute the Choi-Effros product of elements of the Poisson boundary.
	\begin{lemma}\label{formula}
		For any $a, b \in H^{\infty}(N, P)$, the sequence $\{P^n(ab)\}$ converges to the Choi-Effros product $a \circ b$ in the strong operator topology. 
	\end{lemma}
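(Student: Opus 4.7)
The proof proceeds through the dilation-theoretic model recalled just before the statement. Write $(M, \alpha, p)$ for a minimal dilation of $(N, P)$ and $\theta \colon M^{\alpha} \to H^{\infty}(N, P)$ for Izumi's completely positive order isomorphism $\theta(\tilde x) = p \tilde x p$. Since an operator system admits at most one compatible von Neumann algebra structure, this completely positive order isomorphism is automatically a $*$-isomorphism of the two von Neumann algebras. Setting $\tilde a = \theta^{-1}(a)$ and $\tilde b = \theta^{-1}(b)$ in $M^{\alpha}$, the Choi--Effros product is therefore realized as
\[ a \circ b \;=\; \theta(\tilde a \tilde b) \;=\; p\,\tilde a\,\tilde b\,p. \]

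Next, I would apply the intertwining relation $P^n(x) = p \alpha^n(x) p$ to $x = ab \in N$. Using that $\alpha$ is a $*$-endomorphism, this yields $P^n(ab) = p \alpha^n(a) \alpha^n(b) p$. Since $a = p \tilde a p$ and $\alpha(\tilde a) = \tilde a$, a short induction on $n$ gives $\alpha^n(a) = \alpha^n(p)\,\tilde a\,\alpha^n(p)$, and similarly for $b$; hence
\[ P^n(ab) \;=\; p\,\alpha^n(p)\,\tilde a\,\alpha^n(p)\,\tilde b\,\alpha^n(p)\,p. \]
Granted that $\alpha^n(p) \to 1_M$ in the strong operator topology, joint SOT-continuity of multiplication on norm-bounded subsets of $M$ forces the right-hand side to converge to $p\,\tilde a\,\tilde b\,p = a \circ b$, which is exactly what is claimed.

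The principal obstacle I expect is establishing $\alpha^n(p) \uparrow 1_M$ strongly. The starting observation is that $P(1_N) = 1_N$ translates into $p\,\alpha(p)\,p = p$, and a short Cauchy--Schwarz argument upgrades this equality to the inequality $\alpha(p) \geq p$. Iterating, $\{\alpha^n(p)\}_{n \geq 0}$ is an increasing sequence of projections and hence converges strongly to some projection $q \leq 1_M$. To identify $q$ with $1_M$, I would observe that $q \geq \alpha^n(p)$ together with $\alpha^n(a) \in \alpha^n(p) M \alpha^n(p)$ for $a \in N$ forces $q\,\alpha^n(a) = \alpha^n(a)\,q = \alpha^n(a)$ for every $a \in N$ and every $n$; a short induction on products then extends this to $q m = m$ for every $m$ in the $*$-algebra generated by $\bigcup_{n \geq 0} \alpha^n(N)$, and minimality of the dilation together with SOT-closure yields $q = 1_M$.
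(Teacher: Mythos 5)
The paper does not prove this lemma at all: it is quoted verbatim as \cite[Corollary 5.2]{Iz3}, a consequence of Izumi's identification $\theta\colon M^{\alpha}\ni x\mapsto pxp\in H^{\infty}(N,P)$ recalled in the paragraph just above the statement. Your argument is therefore not compared against an in-paper proof but against Izumi's; and what you have written is a correct, self-contained reconstruction of essentially that dilation-theoretic argument. The chain $P^{n}(ab)=p\,\alpha^{n}(a)\alpha^{n}(b)\,p=p\,\tilde a\,\alpha^{n}(p)\,\tilde b\,p$ (using $p\le\alpha^{n}(p)$ to absorb the outer occurrences of $\alpha^{n}(p)$ into $p$) together with $\alpha^{n}(p)\uparrow 1_{M}$ does give SOT-convergence to $p\,\tilde a\,\tilde b\,p=\theta(\tilde a\tilde b)=a\circ b$, the last identification being legitimate because a unital complete order isomorphism between von Neumann algebras is a $*$-isomorphism and the operator system $H^{\infty}(N,P)$ carries at most one von Neumann algebra structure. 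The only place I would tighten the write-up is the very last inference $q=1_{M}$: passing from $qm=m$ on the $*$-algebra generated by $\bigcup_{n\ge 0}\alpha^{n}(N)$ to its SOT-closure identifies $q$ as a unit for that closure, but concluding $q=1_{M}$ from this requires knowing $1_{M}$ lies in that closure (i.e.\ nondegeneracy). It is cleaner to observe that $q\alpha^{n}(a)=\alpha^{n}(a)=\alpha^{n}(a)q$ places $q$ in $M'\cap M=\mathcal{Z}(M)$, so that $q$ is a central projection dominating $p$; the hypothesis in the paper's definition of minimal dilation that the central carrier of $p$ equals $1_{M}$ then forces $q=1_{M}$ immediately. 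With that one-line repair the proof is complete and matches the intended source.
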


	Poisson boundaries over discrete quantum groups were first studied by Izumi \cite{Iz1}, in particular for the dual of Woronowicz's compact quantum group $SU_q(2)$. Izumi's result was generalized to the case of $SU_q(n)$ by Izumi, Neshveyev and Tuset \cite{IzNeTu}. Poisson boundaries for other discrete quantum groups have been studied by Vaes, Vander Vennet and Vergnioux \cite{VaVe08}, \cite{VaVe10}, \cite{VaVer07}.
	In general for a given  Markov operator $P$ on a von Neumann algebra $N$, it is a hard problem to find a concrete realization of the von Neumann algebra $H^\infty(N, P)$, even in the commutative case.  Kaimanovich refers to it as an identification problem \cite{Kia}. In \cite[Theorem 4.1]{Iz2} Izumi showed that if $\Gamma$ is a discrete countable group with a probability measure $\mu$ on $\Gamma$, $\rho$ denotes the right regular representation of $\Gamma$ on $\ell^2(\Gamma)$, and $Q_\mu$ is the Markov operator on $B(\ell^2(\Gamma))$ defined by  
	$$  Q_\mu(x) = \sum_{\gamma \in \Gamma} \mu(\gamma) \rho(\gamma) x \rho(\gamma^{-1}),~~ x \in B(\ell^2(\Gamma)),$$ then the Poisson boundary of
	$(B(\ell^2(\Gamma)), Q_\mu)$ is isomorphic to the crossed product of the Poisson boundary on the level of $\ell^\infty(\Gamma)$ with the canonical action of $\Gamma$ on it. Izumi then raised the question \cite[Problem 4.3]{Iz2} if such an identification result holds for a general second countable group $\Gamma$ with a reasonable good probability measure on it. This question was answered affirmatively by Jaworski and Neufang in \cite{JaNe07}.  This result was further generalized in \cite{KaNeRu} for locally compact quantum groups.
	
	An additional motivation for this article stems from the following example \cite[Page 360]{Iz3}. 
	Let $\mathcal{H}$ denote a $1$-dimensional Hilbert space with an orthonormal basis $\{e\}$ and let $P$ denote the Markov operator acting on $B\big(\mathcal{F}(\mathcal{H})\big)$, the algebra of bounded linear operators on the full Fock space $\mathcal{F}(\mathcal{H})$ over $\mathcal{H}$, defined by $P(x) = l^* x l$ where $l \in B\big(\mathcal{F}(\mathcal{H})\big)$ is the left creation operator associated with $e$, that is, $l(x) = e \otimes x$ for $x \in \mathcal{F}(\mathcal{H})$. Then $H^{\infty}\Big(B\big(\mathcal{F}(\mathcal{H})\big), P\Big)$ as a von Neumann algebra is isomorphic to $L^{\infty}(\mathbb{T})$. 
	
	Let us now explain the setting and the main object of study of this paper. Throughout this article, $\mathcal{H}$ denotes a separable Hilbert space (finite or infinite-dimensional) with an orthonormal basis 
	$\{e_i : i \in \Theta\}$ where $\Theta$ stands for the set $\{1, 2, \cdots, n\} (n \in \mathbb{N}, n > 1)$ or the set $\mathbb{N}$ of natural numbers and $\omega = \{\omega_i : i \in \Theta\}$ is a sequence of strictly positive real numbers such that $\sum_{i \in \Theta} \omega_i = 1$. We define a Markov operator $P_{\omega}$ acting on $B\big(\mathcal{F}(\mathcal{H})\big)$ by
	\begin{align*}
		P_{\omega}(x) = \sum_{i \in \Theta} \omega_i l_{e_i}^* x l_{e_i}, \ x \in B(\mathcal{F}(\mathcal{H})),
	\end{align*}
	where $l_{e_i}$ is the left creation operator associated with $e_i$, $i \in \Theta$. The purpose of this article is to study the non-commutative Poisson boundary $H^{\infty}\Big(B\big(\mathcal{F}(\mathcal{H})\big), P_{\omega}\Big)$. We will see later in Section $3$ that the Poisson boundary does not depend on the choice of the orthonormal basis of $\mathcal{H}$. Stated more precisely, if we choose another orthonormal basis $\{f_i : i \in \Theta\}$ of $\mathcal{H}$ and consider the Markov operator $P_{\omega}^\prime$ on $B\big(\mathcal{F}(\mathcal{H}) \big)$ given by $$P_{\omega}^\prime (x) = \sum_{i \in \Theta} \omega_i l_{f_i}^* x l_{f_i}, \ x \in B(\mathcal{F}(\mathcal{H})),$$
	then the von Neumann algebras $H^{\infty}\Big(B\big(\mathcal{F}(\mathcal{H})\big), P_{\omega}\Big)$ and $H^{\infty}\Big(B\big(\mathcal{F}(\mathcal{H})\big), P_{\omega}^\prime\Big)$ are isomorphic.
	
	In what follows, for notational convenience, we will simply use the notation $H^{\infty}$ to denote $H^{\infty}\Big(B\big(\mathcal{F}(\mathcal{H})\big), P_{\omega}\Big)$. In the special case when $\mathcal{H}$ is finite-dimensional with $\dim \mathcal{H} > 1$, and $\omega$ is the constant sequence $\frac{1}{\dim \mathcal{H}}$, we will sometimes use the notation $H^{\infty}_{\dim \mathcal{H}}$ to denote $H^{\infty}$.
	
	We are now in a position to highlight the main results of this paper. Our first important  result is Theorem \ref{masa} that demonstrates a diffuse \textit{masa} (maximal abelian subalgebra) in $H^\infty$ (see Theorem \ref{masa} for more details).
		\begingroup
	\setcounter{tmp}{\value{theorem}}
	\setcounter{theorem}{0} 
	\renewcommand\thetheorem{\Alph{theorem}}
	\begin{theorem}
		Let $\mathcal{D}$ denote the diagonal subalgebra of $B\big(\mathcal{F}(\mathcal{H})\big)$. Then $\mathcal{D} \cap H^\infty$ is a diffuse masa in $H^\infty$.	
	\end{theorem}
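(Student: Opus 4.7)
The plan is to show that $\mathcal{D}\cap H^\infty$ is a diffuse, commutative, $\sigma$-weakly closed $*$-subalgebra of $H^\infty$ and then to prove it is maximal abelian. A direct computation in the basis $\{e_w : w\in W\}$ of $\mathcal{F}(\mathcal{H})$ (with $W$ the free monoid on $\Theta$) gives $P_\omega(d)_w = \sum_{j\in\Theta}\omega_j d_{jw}$ for $d\in\mathcal{D}$, so $P_\omega$ preserves $\mathcal{D}$; by Lemma~\ref{formula} the Choi-Effros product $a\circ b$ of $a,b\in\mathcal{D}\cap H^\infty$ is the SOT limit of $P_\omega^n(ab)\in\mathcal{D}$, which stays in $\mathcal{D}\cap H^\infty$, and commutativity follows from $ab=ba$. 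Next, I would recognise $d_w=\sum_j\omega_j d_{jw}$ as the bounded-harmonic-function equation for the random walk $w\mapsto jw$ (probability $\omega_j$) on $W$ and, via martingale convergence, set up a $*$-isomorphism $\mathcal{D}\cap H^\infty\cong L^\infty(\Theta^{\mathbb{N}},\omega^{\otimes\mathbb{N}})$ sending $d$ to $\phi_d(\xi)=\lim_n d_{\xi_n\xi_{n-1}\cdots\xi_1}$, with inverse $\phi\mapsto \bigl(w\mapsto\mathbb{E}[\phi(w_{|w|},\ldots,w_1,X_1,X_2,\ldots)]\bigr)$ for $X_i$ i.i.d.\ $\omega$; an analogous martingale computation shows the Choi-Effros product corresponds to pointwise multiplication. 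Since $\max_i\omega_i<1$ the measure $\omega^{\otimes\mathbb{N}}$ is atomless, so $\mathcal{D}\cap H^\infty$ is diffuse.

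For maximality, let $x\in H^\infty$ commute under $\circ$ with every $p\in\mathcal{D}\cap H^\infty$; I show $x(a,b):=\langle xe_b,e_a\rangle = 0$ for $a\neq b$. With $W_1,W_2,\ldots$ i.i.d.\ $\omega$, the harmonicity of $x$ makes $X_n:=x(W_n\cdots W_1 a,\, W_n\cdots W_1 b)$ a bounded martingale with a.s.\ limit $X_\infty$, and similarly $p(W_n\cdots W_1 c) \to \phi_p\circ T_c$, where $T_c(\xi):=(c_{|c|},\ldots,c_1,\xi_1,\xi_2,\ldots)$. Lemma~\ref{formula} and bounded convergence give
\[
(x\circ p)(a,b) = \mathbb{E}\bigl[X_\infty\cdot(\phi_p\circ T_b)\bigr], \quad
(p\circ x)(a,b) = \mathbb{E}\bigl[X_\infty\cdot(\phi_p\circ T_a)\bigr],
\]
so commutation reads $\mathbb{E}[X_\infty(\phi_p\circ T_b-\phi_p\circ T_a)]=0$ for every $\phi_p\in L^\infty(\Theta^{\mathbb{N}})$. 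The cylinders $C_a:=T_a(\Theta^{\mathbb{N}})$ and $C_b:=T_b(\Theta^{\mathbb{N}})$ are either disjoint or nested. In the disjoint case, choosing $\phi_p$ supported on $C_b$ immediately forces $X_\infty=0$ a.s., hence $x(a,b)=\mathbb{E}[X_\infty]=0$.

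The main obstacle is the nested case: WLOG $|a|<|b|$ so that $a$ is a suffix of $b$, $k:=|b|-|a|\ge 1$, and $C_b\subsetneq C_a$. Taking $\phi_p$ supported on $C_a\setminus C_b$ yields $X_\infty\mathbb{1}_{E^c}=0$ a.s., where $E:=\{W_1=b_k,\ldots,W_k=b_1\}$; then taking $\phi_p$ supported on $C_b$ and using $X_\infty = X_\infty\mathbb{1}_E$, a short computation produces the shift-type identity
\[
\mathbb{E}[X_\infty\cdot h] = \mathbb{E}\bigl[X_\infty\cdot (h\circ\sigma^k)\bigr] \qquad (h\in L^\infty(\Theta^{\mathbb{N}})),
\]
where $\sigma$ is the left shift on $\Theta^{\mathbb{N}}$. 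Iterating this identity and applying the mean ergodic theorem to the mixing transformation $\sigma^k$ on $(\Theta^{\mathbb{N}},\omega^{\otimes\mathbb{N}})$ yields $\mathbb{E}[X_\infty h]=\mathbb{E}[X_\infty]\,\mathbb{E}[h]$, so $X_\infty$ is a.s.\ constant; combined with $X_\infty|_{E^c}=0$ and $\mathbb{P}(E^c)>0$ this forces $X_\infty=0$ a.s., giving $x(a,b)=0$. Hence $x\in\mathcal{D}\cap H^\infty$, which proves the masa property.
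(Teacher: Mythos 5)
Your proof is correct, but it takes a genuinely different route from the paper's. The paper argues entirely inside the operator algebra: it first shows $\mathcal{D}_\omega = \mathcal{D}\cap H^\infty$ is generated by the projections $r_I\circ r_I^*$, proves diffuseness by deriving a contradiction from a putative minimal projection (using that $\omega_{I_n}^{-1}\to\infty$), and for maximality splits into the same two combinatorial cases you do, handling the ``incomparable'' case by commuting $x$ with $r_I\circ r_I^*-r_J\circ r_J^*$ and the ``nested'' case ($J=IK$) by iterating $\varphi(r_I^*\circ x\circ r_{IK})=\omega_K^n\,\varphi(r_{IK^n}^*\circ x\circ r_{IK^{n+1}})$ and letting $\omega_K^{-n}\uparrow\infty$ against a bounded quantity. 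You instead identify $\mathcal{D}\cap H^\infty$ with $L^\infty(\Theta^{\mathbb N},\omega^{\otimes\mathbb N})$ --- the classical Poisson boundary of the random walk $w\mapsto jw$ on the free monoid --- via martingale convergence, which makes diffuseness immediate (the Bernoulli measure is atomless) and turns the commutation relation into the identity $\mathbb{E}[X_\infty(\phi\circ T_b-\phi\circ T_a)]=0$; your disjoint-cylinder case then kills $X_\infty$ outright, and your nested case replaces the paper's unboundedness trick by the shift identity plus the mean ergodic theorem for the (mixing) Bernoulli shift. Both mechanisms exploit the same self-similarity $b=ca$, but they are genuinely different: the paper's argument is shorter, self-contained, and reuses machinery needed elsewhere (the generators $r_I\circ r_I^*$ reappear in the centralizer analysis), while yours buys a sharper structural statement --- an explicit isomorphism of the diagonal masa with the classical boundary of the underlying walk, very much in the spirit of Izumi's identification results --- at the cost of importing martingale convergence and ergodic-theoretic input. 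The only points you should make explicit are routine: the Poisson reconstruction formula $d_w=\mathbb{E}\bigl[\phi_d(T_w(X))\bigr]$ (martingale property plus bounded convergence, needed for surjectivity of $d\mapsto\phi_d$ and for passing from ``commutes with all $p$'' to ``the identity holds for all $\phi\in L^\infty$''), and the reduction to $|a|<|b|$ in the nested case via $x\mapsto x^*$.
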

	
	Next we summarize our results regarding the type classification of $H^\infty$ (see Remark \ref{Cunz-1}, Theorem \ref{type III} and Corollary \ref{type-rational} for the precise formulations).
	\begin{theorem}
		With notations as above, $H^{\infty}$ is the $\sigma$-weak closure of the Cuntz algebra generated by the right creation operators and hence, is injective. For any choice of the sequence $\omega$, $H^\infty$ is always a factor of type $III$ . Further, if $\mathcal{H}$ is finite-dimensional and if $G$ is the closed subgroup of $\R_+^*$  generated by $\{ \omega_1, \omega_2, \cdots, \omega_{\dim \mathcal{H}} \}$,
		then  	 
		\[
		H^{\infty} \ \text{is} \ 
		\begin{cases}
			\text{type} \ {III}_\lambda,\quad \text{ if }  ~~G = \{ \lambda^n: ~ n \in \Z\}, 0 < \lambda < 1, \text{ and }\\
			\text{type} \ {III}_1, \quad \text{ if }~~ G = \R_+^*.
		\end{cases} 
		\]
		Moreover, in the case when $\mathcal{H}$ is finite-dimensional, if $H^\infty$ is of type $III_\lambda$ for some real number $\lambda \in (0, 1)$, then $\lambda$ is an algebraic number and if, in particular, $\lambda$ is rational, then $\lambda$ must be of the form $\frac{1}{k}$ for some natural number $k$. In particular, if $\mathcal{H}$ is finite-dimensional and $\omega$ is the constant sequence $\frac{1}{\dim \mathcal{H}}$, then $H^\infty$ is a factor of type $III_{\frac{1}{\dim \mathcal{H}}}$.
	\end{theorem}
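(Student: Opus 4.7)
\emph{Stage 1 (Cuntz identification and injectivity).} My proof plan has three stages. First, structurally, I would observe that every right creation operator is $P_\omega$-harmonic: for each $j \in \Theta$ and $v \in \mathcal{F}(\mathcal{H})$,
\[
l_{e_j}^*\, r_{e_i}\, l_{e_j}(v) \;=\; l_{e_j}^*\bigl(e_j \otimes v \otimes e_i\bigr) \;=\; v\otimes e_i \;=\; r_{e_i}(v),
\]
so $l_{e_j}^* r_{e_i} l_{e_j} = r_{e_i}$ for every $j$, and summing against $\omega_j$ gives $P_\omega(r_{e_i}) = r_{e_i}$. The same manipulation shows that $r_{e_i}^*$ and every word in the right creations and their adjoints is harmonic, so the Cuntz algebra $\mathcal{O}_{\dim\mathcal{H}}$ generated by $\{r_{e_i}\}_{i\in\Theta}$ sits inside $H^\infty$. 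The reverse containment $H^\infty \subseteq \overline{\mathcal{O}_{\dim\mathcal{H}}}^{\sigma\text{-wk}}$ is the chief obstacle I foresee in the entire argument; I would attack it via a Fourier-type decomposition of a generic harmonic element in the monomials $r_{e_{i_1}}\cdots r_{e_{i_k}}\, r_{e_{j_l}}^*\cdots r_{e_{j_1}}^*$, extracting coefficients through the diffuse masa $\mathcal{D}\cap H^\infty$ supplied by Theorem A and controlling the resulting series with Lemma \ref{formula}. Once the identification is in hand, injectivity of $H^\infty$ is automatic from nuclearity of $\mathcal{O}_{\dim\mathcal{H}}$.

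\emph{Stage 2 (faithful normal state and type classification).} On $H^\infty$ I would introduce a faithful normal state $\psi$ as a suitable weak-$\ast$ cluster point of $\varphi_0 \circ P_\omega^n$, where $\varphi_0$ is the vacuum state. A direct computation shows that $\psi$, restricted to the Cuntz subalgebra, is the quasi-free Cuntz state
\[
\psi\bigl(r_{e_{i_1}}\cdots r_{e_{i_k}}\, r_{e_{j_l}}^*\cdots r_{e_{j_1}}^*\bigr) \;=\; \delta_{k,l}\,\delta_{i_1 j_1}\cdots\delta_{i_k j_k}\,\omega_{i_1}\cdots\omega_{i_k}.
\]
Its modular automorphism scales each generator $r_{e_i}$ by a power of $\omega_i$, so $\mathrm{Sp}(\Delta_\psi)$ is the closed multiplicative subgroup $G$ of $\R_+^*$ generated by $\{\omega_i\}_{i\in\Theta}$. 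Standard ITPFI/ergodicity arguments then show that $\psi$ is a factor state and that $S(H^\infty) = G \cup \{0\}$, forcing $H^\infty$ to be an injective type $III$ factor for every choice of $\omega$. The Araki--Woods dichotomy finally separates type $III_\lambda$ (when $G = \{\lambda^n\}$) from type $III_1$ (when $G = \R_+^*$).

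\emph{Stage 3 (algebraic refinement).} Specializing to $\mathcal{H}$ finite-dimensional with $H^\infty$ of type $III_\lambda$, each $\omega_i = \lambda^{k_i}$ for a positive integer $k_i$, so the normalization $\sum_i \omega_i = 1$ becomes
\[
\sum_{i=1}^{\dim\mathcal{H}} \lambda^{k_i} \;=\; 1,
\]
presenting $\lambda$ as a root of a polynomial with integer coefficients, hence algebraic. If in addition $\lambda = p/q$ is rational in lowest terms with $q > p \geq 1$, clearing denominators with $K = \max_i k_i$ yields $\sum_i p^{k_i} q^{K-k_i} = q^K$; every $k_i \geq 1$ makes the left side divisible by $p$, whence $\gcd(p,q)=1$ forces $p = 1$, so $\lambda = 1/q$. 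In the uniform case $\omega_i = 1/\dim\mathcal{H}$ one has $G = \{(\dim\mathcal{H})^{-n}\}_{n\in\Z}$, and hence $H^\infty$ is of type $III_{1/\dim\mathcal{H}}$, as claimed.
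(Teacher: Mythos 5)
Your Stage 3 is correct and is essentially the paper's own argument (Lemma \ref{rational} and Corollary \ref{type-rational}), and your state $\psi$ is in fact just the vacuum state $\varphi$ of Proposition \ref{state} (every element of $H^\infty$ is $P_\omega$-fixed, so $\varphi_0\circ P_\omega^n=\varphi_0$ there, and the quasi-free formula you write is exactly $\varphi(r_I\circ r_J^*)=\delta_{I,J}\,\omega_I$ from Corollary \ref{phi}). The problems are in Stages 1 and 2, where the two steps you yourself flag as the hard ones are left as plans rather than proofs. In Stage 1 the inclusion $H^\infty\subseteq\mathcal{O}_{\dim\mathcal{H}}''$ is to be attacked by a ``Fourier-type decomposition'' of a harmonic element into the monomials $r_I\, r_J^*$; but a general element of a von Neumann algebra generated by the $r_i$ need not be recoverable as a convergent series of its matrix coefficients in any naive sense, and nothing in your sketch addresses the convergence or the totality of such monomials. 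The paper avoids this entirely: since $\sigma^\varphi_t(r_I\circ r_J^*)=(\omega_I/\omega_J)^{it}\,r_I\circ r_J^*$ (Proposition \ref{delta}), the modular group of $\varphi$ preserves the subalgebra $\mathcal{N}$ generated by the $r_i$, so Takesaki's theorem provides a $\varphi$-compatible conditional expectation $E:H^\infty\to\mathcal{N}$, and the identity $\omega_J\langle(x-E(x))r_I\Omega,r_J\Omega\rangle=\varphi\big((x-E(x))\circ r_I\circ r_J^*\big)=0$ forces $E=\mathrm{id}$ (Proposition \ref{generator}).

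In Stage 2 the assertion that ``standard ITPFI/ergodicity arguments'' show $\varphi$ is a factor state is precisely the missing content: $H^\infty$ is not presented as an infinite tensor product, and the factoriality of the centralizer $H^\infty_\varphi$ is what the paper must actually prove (Proposition \ref{phifactor}), using the Doplicher--Roberts fact (Lemma \ref{DR}) that the canonical endomorphism $\alpha(x)=\sum_i r_i\circ x\circ r_i^*$ is a norm limit of inner perturbations on the diagonal, whence any element of $\mathcal{Z}(H^\infty_\varphi)$ is $\alpha$-fixed and therefore scalar. That argument is carried out only for $\dim\mathcal{H}<\infty$; for arbitrary $\omega$ (in particular $\dim\mathcal{H}=\infty$) the paper does not compute $S(H^\infty)$ at all, and instead proves the type $III$ statement by a separate direct argument (assuming semifiniteness, producing unitaries $u_t\in\mathcal{Z}(H^\infty_\varphi)$ implementing $\sigma^\varphi_t$, showing $\varphi(u_{t_0})=0$ for a suitable $t_0$, and concluding $u_{t_0}=0$ because $\mathcal{Z}(H^\infty_\varphi)\subseteq\mathcal{D}_\omega$). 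Your blanket claim that $S(H^\infty)=G\cup\{0\}$ for every choice of $\omega$ is therefore unsupported and is stronger than what the paper establishes; as written, Stages 1 and 2 are outlines of where a proof would have to go rather than proofs.
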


	Next we address the following question. 
	\begin{question}\label{question}
		Given a unitary operator $U : \mathcal{H} \rightarrow \mathcal{H}$, does there exist an automorphism ($*$-algebra isomorphism) of $H^{\infty}$ that takes $r_{\xi}$ to $r_{U \xi}$ for $\xi \in \mathcal{H} ?$ (Here, for any $\xi \in \mathcal{H}$, $r_\xi$ denotes the right creation operator associated with $\xi$.)	
	\end{question}
	This question and its treatment is inspired in part by the second quantization procedure on free Araki-Woods von Neumann algebras (see \cite{BIK}, \cite{ChEr}) (more generally, on $q$-Araki-Woods \cite{MaWa} or on mixed $q$-Araki-Woods algebras \cite{BKM}) that is an indispensable tool for obtaining approximation properties. 
	We summarize our results below, referring the reader to Theorem \ref{quan} for a precise statement.
	\begin{theorem}
		Let $\mathcal{H}$ be finite-dimensional with $\dim \mathcal{H} > 1$, and let $\omega$ be the constant sequence $\frac{1}{\dim \mathcal{H}}$. 
		For each unitary $U$ on $\mathcal{H}$, there is a unique automorphism $\Psi_U$ of $H^\infty_{\dim \mathcal{H}}$ that takes $r_{\xi}$ to $r_{U \xi}$ for $\xi \in \mathcal{H}$. Further, the correspondence $$\mathcal{U}(\mathcal{H}) \ni U \mapsto \Psi_U \in \text{Aut}(H^{\infty}_{\dim \mathcal{H}})$$ of the unitary group $\mathcal{U}(\mathcal{H})$ of $\mathcal{H}$ to $\text{Aut}(H^{\infty}_{\dim \mathcal{H}})$, the automorphism group of $H^{\infty}_{\dim \mathcal{H}}$, is an injective group homomorphism.
	\end{theorem}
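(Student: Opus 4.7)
The plan is to implement a second quantization procedure at the boundary level. For each unitary $U$ on $\mathcal{H}$, define the unitary $\mathcal{F}(U) := \bigoplus_{n \geq 0} U^{\otimes n}$ on $\mathcal{F}(\mathcal{H})$; a direct check on simple tensors shows that $\mathrm{Ad}(\mathcal{F}(U))$ is a $*$-automorphism of $B(\mathcal{F}(\mathcal{H}))$ satisfying $\mathrm{Ad}(\mathcal{F}(U))(l_\xi) = l_{U\xi}$ and $\mathrm{Ad}(\mathcal{F}(U))(r_\xi) = r_{U\xi}$ for every $\xi \in \mathcal{H}$. The strategy is to argue that this ambient automorphism restricts to an automorphism $\Psi_U$ of $H^\infty_{\dim \mathcal{H}}$ that respects the Choi-Effros product.

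The crux is to verify that $\mathrm{Ad}(\mathcal{F}(U))$ commutes with $P_\omega$ when $\omega$ is the constant sequence $1/n$ with $n = \dim \mathcal{H}$. Using $\mathcal{F}(U) l_{e_i} \mathcal{F}(U)^* = l_{U e_i}$, this reduces to the basis-independence identity
\[ \sum_{i} l_{f_i}^* y\, l_{f_i} \;=\; \sum_{i} l_{e_i}^* y\, l_{e_i}, \qquad y \in B(\mathcal{F}(\mathcal{H})), \]
for any two orthonormal bases $\{e_i\}$ and $\{f_i\}$ of $\mathcal{H}$. I would establish this by writing $f_i = \sum_j u_{ji} e_j$ with $(u_{ji})$ unitary, expanding by linearity of $\xi \mapsto l_\xi$, and contracting via $\sum_i \overline{u_{ji}} u_{ki} = \delta_{jk}$. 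This is precisely where the finite-dimensionality of $\mathcal{H}$ and the constancy of $\omega$ enter.

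Once $[\mathrm{Ad}(\mathcal{F}(U)), P_\omega] = 0$ is in hand, the fixed-point operator system $H^\infty$ is stable under $\mathrm{Ad}(\mathcal{F}(U))$, and the commutation propagates to every iterate $P_\omega^n$. Invoking Lemma \ref{formula} together with SOT-continuity of $\mathrm{Ad}(\mathcal{F}(U))$ on bounded sets yields
\[ \Psi_U(a \circ b) \;=\; \Psi_U\bigl(\lim_n P_\omega^n(ab)\bigr) \;=\; \lim_n P_\omega^n\bigl(\Psi_U(a)\Psi_U(b)\bigr) \;=\; \Psi_U(a) \circ \Psi_U(b), \]
where $\Psi_U := \mathrm{Ad}(\mathcal{F}(U))|_{H^\infty}$. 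Thus $\Psi_U$ is a $*$-automorphism of the Poisson boundary with $\Psi_U(r_\xi) = r_{U\xi}$ by construction.

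For the remaining assertions: uniqueness follows from Theorem B above, since $H^\infty_{\dim \mathcal{H}}$ is the $\sigma$-weak closure of the Cuntz algebra generated by the right creation operators (and $r_\xi$ is a linear combination of $\{r_{e_i}\}$), so any automorphism with the prescribed action on all $r_\xi$ is determined on a $\sigma$-weakly dense $*$-subalgebra. Injectivity of $U \mapsto \Psi_U$ is immediate: evaluating $r_{U\xi} = r_\xi$ on the vacuum vector $\Omega$ forces $U\xi = \xi$. The homomorphism law $\Psi_{UV} = \Psi_U \circ \Psi_V$ holds on the generators $r_\xi$ and hence globally. The main conceptual obstacle is that $H^\infty$ carries the Choi-Effros product rather than the ambient multiplication; passing from the ambient $*$-automorphism to a Choi-Effros-preserving one is exactly what Lemma \ref{formula} supplies, once commutation with $P_\omega$ has been secured.
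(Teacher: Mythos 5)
Your proposal is correct and follows essentially the same route as the paper: the paper likewise observes that $\widetilde{\Gamma}_U$ intertwines $P_\omega$ with the Markov operator built from the basis $\{Ue_i\}$, and then uses the unitarity contraction $\sum_k \overline{u_{ki}}u_{kj}=\delta_{i,j}$ to show that this new operator coincides with $P_\omega$ when $\omega$ is constant, so that the Choi--Effros products agree by the SOT-limit formula of Lemma \ref{formula}. The uniqueness, injectivity, and homomorphism assertions are handled in the paper exactly as you do, via the generation of $H^\infty_{\dim\mathcal H}$ by the right creation operators and evaluation at the vacuum.
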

	\endgroup
	Below, we briefly discuss the contents of this article.
	
	The material of the Section $2$ is well known and is meant just to set up the notation to be used in the sequel for the convenience of the	reader. In this section we summarize relevant facts concerning
	full Fock spaces over Hilbert spaces.
	
	The Section $3$ begins by observing that the Poisson boundary does not depend on the choice of the orthonormal basis and then proves a technical result (Proposition \ref{multiplications}) that establishes the multiplication rule in the algebra $H^{\infty}$. Next we show in Proposition \ref{state} that the restriction to $H^{\infty}$ of the vector state on $B\big(\mathcal{F}(\mathcal{H})\big)$ induced by the vacuum vector of $\mathcal{F}(\mathcal{H})$ is indeed a faithful normal state on $H^{\infty}$ and finally, we conclude this section with Theorem \ref{factor} which proves that $H^{\infty}$ is an infinite factor. 
	
	The Section $4$ considers the modular theory for the GNS representation of $H^{\infty}$ associated with the faithful normal state obtained in the preceding section. One of the main contributions of this section is Proposition \ref{generator} which shows that $H^{\infty}$, as a von Neumann algebra, is the $\sigma$-weak closure of the Cuntz algebra $\mathcal{O}_{\dim \mathcal{H}}$ generated by $\{r_{e_i} : i \in \Theta\}$ (where, recall that, for any $\xi \in \mathcal{H}$, $r_{\xi}$ denotes the right creation operator associated with $\xi$). 
	
	The next section is devoted to showing that the abelian von Neumann subalgebra of $H^{\infty}$ which is the intersection of $H^{\infty}$ with the diagonal subalgebra of $B \big(\mathcal{F}(\mathcal{H})\big)$, is a diffuse 
	maximal abelian subalgebra in $H^{\infty}$.
	
	In the penultimate section (Section $6$) we discuss the centralizer of $H^{\infty}$ and its factoriality. The main result of this section is Theorem \ref{type III} which shows that $H^{\infty}$ is a type $III$ factor for any choice of the sequence $\omega$, and if
	$\mathcal{H}$ is finite-dimensional, we completely classify $H^{\infty}$ in terms of its Connes' $S$ invariant.

	The main result of the final section $7$ is Theorem \ref{quan} which proves that, in the case when $\mathcal{H}$ is finite-dimensional ($\dim \mathcal{H} > 1$) and $\omega$ is the constant sequence $\frac{1}{\dim \mathcal{H}}$, the subgroup of  the automorphism group of $H^{\infty}_{\dim \mathcal{H}}$ consisting of all those automorphisms of $H^{\infty}_{\dim \mathcal{H}}$ which preserve the set $\{r_{\xi}: \xi \in \mathcal{H}\}$ is isomorphic to $\mathcal{U}(\mathcal{H})$, the unitary group of $\mathcal{H}$.

	\section{Fock spaces}

	This section is devoted to recalling various standard facts concerning full Fock
	spaces over Hilbert spaces and establishing the terminology and notation that we follow later. The reader may consult \cite{krp92} or \cite{Jns09}, for instance, for proofs and details.
	As mentioned in the introduction, $\mathcal{H}$ denotes a separable Hilbert space with an orthonormal basis $\{e_i : i \in \Theta \}$ where $\Theta$ stands for the set $\{1, 2, \cdots, n\}$ ($n \in \mathbb{N}, n > 1$) or the set $\mathbb{N} = \{1, 2, 3, \cdots\}$.
	We consider the full Fock space over $\mathcal{H}$ defined by
	\begin{align*}
		\mathcal{F}(\mathcal{H}) = \underset{n \geq 0}{\oplus} \mathcal{H}^{\otimes n}
	\end{align*}
	where $\mathcal{H}^{\otimes 0} := \mathbb{C}\Omega$ and for $n \geq 1, \mathcal{H}^{\otimes n}$ is the (Hilbert) tensor product of $n$-copies of $\mathcal{H}$. Here, $\Omega$ is fixed complex number with modulus 1 and we  refer it as vacuum vector. 
	
	For the sake of convenience, we shall introduce some notations.  
	Let $\Lambda$ and $\Lambda^*$ denote respectively the sets 
	\begin{align*}
		\Lambda = \bigcup_{n \geq 0} \Theta^n, \ \mbox{and} \ \Lambda^* = \bigcup_{n \geq 1} \Theta^n, 
	\end{align*}
	where for $n \geq 1, \Theta^n $ denotes the $n$-fold Cartesian product of $\Theta$ and $\Theta^0:= \{()\}$, where $()$ is the empty tuple. The elements of $\Theta^n$, $n \geq 1$, are referred to as sequences of length $n$. If $I$ is a sequence of length $n$, $n \geq 1$, say $I = (i_1, i_2, \cdots, i_n)$, for the interest of notational convenience, we shall, in the sequel, simply write $i_1 i_2 \cdots i_n$ for $I$. We adopt the convention that the empty tuple has length $0$. If $I$ is a sequence in $\Lambda$, we denote its length by $\vert I \rvert$. For $I = ()$, the empty tuple, we set
	\begin{align*}
		e_I:= \Omega,
	\end{align*}
	and for $I \in \Lambda^*$, say $I = i_1 i_2 \cdots i_k$, we define
	\begin{align*}
		e_I:= e_{i_1} \otimes e_{i_2} \otimes \cdots \otimes e_{i_k}.
	\end{align*} 
	Obviously, with the above notation, $\mathcal{B}:= \{e_I : I \in \Lambda\}$ is an orthonormal basis for $\mathcal{F}(\mathcal{H})$. We refer to the elements of $\mathcal{B}$ as simple basis elements.  
	Further, for $I \in \Lambda^*$, we let $I^{op}$ denote the sequence which is the reverse of $I$, that is, if
	\begin{align*}
		I = i_1 i_2 \cdots i_k, \ \mbox{then} \ I^{op} = i_k \cdots i_2 i_1.	
	\end{align*}
	Let $I = i_1 i_2 \cdots i_k \in \Theta^k$, $k \geq 0$. For any non-negative integer $m \leq k$, we denote by $I_m$ the subsequence of $I$ of length $m$ defined by
	\begin{align*}
		I_m = \ \mbox{empty tuple, if} \ m = 0 \ \mbox{and} \ I_m = i_1 i_2 \cdots i_m \ \mbox{if} \ 1 \leq m \leq k.
	\end{align*} 
	If $I, J$ are two sequences in $\Lambda$, then $IJ$ will denote the sequence of length $\lvert I \rvert + \lvert J \rvert$ obtained by juxtaposition. That is, if $I= i_1 i_2 \cdots i_k$, and $J = j_1 j_2 \cdots j_l$, then
	$$IJ = i_1 i_2 \cdots i_k j_1 j_2 \cdots j_l.$$
	Further, given a sequence $I \in \Lambda$ and an integer $n \geq 1$, we use the notation $I^n$ to represent the sequence $\underbrace{I I \cdots I}_\text{$n$-times}$.

	For each $\xi \in \mathcal{H}$, the left creation operator associated with $\xi$, denoted $l_\xi$, is the
	bounded linear operator on $\mathcal{F}(\mathcal{H})$ that satisfies
	\begin{align*}
		l_\xi(\eta) = \xi \otimes \eta, \ \eta \in \mathcal{F}(\mathcal{H}).
	\end{align*}
	Similarly, the right creation operator associated with $\xi$, denoted $r_\xi$, is defined by
	\begin{align*}
		r_\xi(\eta) = \eta \otimes \xi, \ \eta \in \mathcal{F}(\mathcal{H}).
	\end{align*}
	The adjoint $l_\xi^*$ (resp., $r_\xi^*$) of $l_\xi$ (resp., $r_\xi$)  is called the left (resp., right)
	annihilation operator associated with $\xi$. One can easily verify that $l_\xi^*$ and $r_\xi^*$
	satisfy the relations:
	\begin{align*}
		l_\xi^*(\Omega) = 0, \ \mbox{and} \ 
		l_\xi^*(\eta_1 \otimes \eta_2 \otimes \cdots \otimes \eta_k) = \langle \eta_1, \xi \rangle \eta_2 \otimes \cdots \otimes \eta_k \ \mbox{for} \ k \geq 1, \eta_i \in \mathcal{H},
	\end{align*}
	(where $\eta_2 \otimes \cdots \otimes \eta_k = \Omega$ for $k = 1$) and 
	\begin{align*}
		r_\xi^*(\Omega) = 0, \ \mbox{and} \  
		r_\xi^*(\eta_1 \otimes \eta_2 \otimes \cdots \otimes \eta_k) = \langle \eta_k, \xi \rangle \eta_1 \otimes \cdots \otimes \eta_{k-1} \ \mbox{for} \ k \geq 1, \eta_i \in \mathcal{H},
	\end{align*}
	(where $\eta_1 \otimes \cdots \otimes \eta_{k-1} = \Omega$ for $k = 1$). For $i \in \Theta$, we simply use the notation $r_i$ (resp., $l_i$) to denote $r_{e_i}$ (resp., $l_{e_i}$).
	For any $I \in \Lambda$, let $l_I, r_I$ be defined by
	\begin{align*}
		r_I = l_I = 1  \  (\mbox{the identity operator on} \ \mathcal{F}(\mathcal{H})) \ \mbox{if} \ I \  \mbox{is the empty tuple},	
	\end{align*}
	and if $I = i_1 i_2 \cdots i_k$ with $k \geq 1$, then
	\begin{align*}
		r_I = r_{i_1} r_{i_2} \cdots r_{i_k} \ \mbox{and} \ 
		l_I = l_{i_1} l_{i_2} \cdots l_{i_k}.
	\end{align*}	
	We list a few simple facts regarding these operators as a lemma, for the  convenience of reference in the sequel.
	\begin{lemma}\label{form1} With  notations discussed above, we have: 
		\begin{itemize}
			\item[(i)] $l_i^* p_{\Omega} = r_i^* p_{\Omega} = p_{\Omega} l_i = p_{\Omega} r_i = 0$ for all $i \in \Theta$, where $p_{\Omega}$ denotes the orthogonal projection of $\mathcal{F}(\mathcal{H})$ onto $\mathbb{C}\Omega$.
			\item[(ii)] For any $\xi, \eta$ in $\mathcal{H}$, 
			\begin{align*}
				r_\xi ^*r_\eta =l_\xi^* l_\eta =\langle \eta ,\xi \rangle 1,r_\xi^*l_\eta = l_\eta r_\xi^* + \langle \eta, \xi \rangle p_{\Omega} \ \mbox{and} \ l_\eta^*r_\xi = r_\xi l_\eta^* + \langle \xi, \eta \rangle p_{\Omega}.
			\end{align*}
			In particular, 
			\begin{align*}
				r_i^* r_j = l_i^* l_j= \delta_{i, j}, r_i^* l_j = l_j r_i^* + \delta_{i, j} p_{\Omega} \ \mbox{and} \ l_i^* r_j = r_j l_i^* + \delta_{i, j} p_{\Omega} \ \mbox{for all} \ i, j \in \Theta,
			\end{align*}
			where $\delta_{i, j}$ denotes the Kronecker delta function.
		\end{itemize}
		
	\end{lemma}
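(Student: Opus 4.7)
The plan is to verify all the identities in Lemma \ref{form1} by direct computation on simple tensors, using the definitions of the creation and annihilation operators recalled just before the statement, and then extending by linearity and continuity.

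For part (i), I would note that $l_i^*(\Omega) = 0$ and $r_i^*(\Omega) = 0$ hold by definition of the annihilation operators, and since $p_{\Omega}$ is the rank-one projection onto $\mathbb{C}\Omega$, it follows immediately that $l_i^* p_{\Omega} = r_i^* p_{\Omega} = 0$. For the other two identities, I would observe that, for any $\eta \in \mathcal{H}^{\otimes k}$ with $k \geq 0$, the vectors $l_i(\eta) = e_i \otimes \eta$ and $r_i(\eta) = \eta \otimes e_i$ both lie in $\mathcal{H}^{\otimes (k+1)}$, which is orthogonal to $\mathbb{C}\Omega$; hence $p_{\Omega} l_i = p_{\Omega} r_i = 0$ on each summand of $\mathcal{F}(\mathcal{H})$, and the identity extends to all of $\mathcal{F}(\mathcal{H})$ by linearity and boundedness.

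For part (ii), I would verify the identities $r_\xi^* r_\eta = l_\xi^* l_\eta = \langle \eta, \xi \rangle 1$ first. Evaluating $l_\xi^* l_\eta$ on a simple tensor $\eta_1 \otimes \cdots \otimes \eta_k$ with $k \geq 1$ gives $l_\xi^*(\eta \otimes \eta_1 \otimes \cdots \otimes \eta_k) = \langle \eta, \xi \rangle \eta_1 \otimes \cdots \otimes \eta_k$, and evaluating on $\Omega$ gives $l_\xi^*(\eta) = \langle \eta, \xi \rangle \Omega$; the calculation for $r_\xi^* r_\eta$ is symmetric. For the mixed commutation relation $r_\xi^* l_\eta = l_\eta r_\xi^* + \langle \eta, \xi \rangle p_{\Omega}$, I would split into two cases. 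On a vector $\eta_1 \otimes \cdots \otimes \eta_k$ with $k \geq 1$, both sides evaluate to $\langle \eta_k, \xi \rangle\, \eta \otimes \eta_1 \otimes \cdots \otimes \eta_{k-1}$ (the $p_{\Omega}$ term contributes nothing here). On $\Omega$, the left side yields $r_\xi^*(\eta) = \langle \eta, \xi \rangle \Omega$, while the right side yields $0 + \langle \eta, \xi \rangle \Omega$, using that $r_\xi^*(\Omega) = 0$. The identity $l_\eta^* r_\xi = r_\xi l_\eta^* + \langle \xi, \eta \rangle p_{\Omega}$ follows either by an entirely analogous calculation or by taking adjoints of the previous identity with the roles of $\xi$ and $\eta$ swapped.

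The special cases involving $e_i, e_j$ are obtained by substituting $\xi = e_i$, $\eta = e_j$ and using $\langle e_j, e_i \rangle = \delta_{i,j}$. Since every step reduces to an application of the defining formulas of $l_\xi, r_\xi, l_\xi^*, r_\xi^*$ on simple tensors, there is no real obstacle; the only point requiring care is correctly handling the $k = 1$ case where, by the convention stated in the definitions, $\eta_2 \otimes \cdots \otimes \eta_k$ and $\eta_1 \otimes \cdots \otimes \eta_{k-1}$ are both understood to equal $\Omega$, so that the $p_{\Omega}$ term in the commutation relations correctly accounts for the contribution coming from the vacuum summand.
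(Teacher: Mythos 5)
Your proof is correct: the paper states this lemma without proof (as "a few simple facts"), and the intended argument is exactly the direct verification on simple tensors and on $\Omega$ that you carry out, with the only delicate points (the vacuum summand and the $k=1$ case) handled properly. One cosmetic remark: the last identity follows by taking adjoints of $r_\xi^* l_\eta = l_\eta r_\xi^* + \langle \eta,\xi\rangle p_\Omega$ as it stands, since the adjoint of the scalar $\langle\eta,\xi\rangle$ already produces $\langle\xi,\eta\rangle$ — no swap of $\xi$ and $\eta$ is needed.
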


	\section{$H^{\infty}$: an infinite factor}
	Recall from the introduction that the main object of study in this article is the non-commutative Poisson boundary associated with the pair $\Big(B\big(\mathcal{F}(\mathcal{H})\big), P_{\omega} \Big)$ where $\omega = \{\omega_i : i \in \Theta\}$ is a sequence of positive real numbers such that $\underset{i \in \Theta}{\sum} \omega_i = 1$ and $P_{\omega} : B\big(\mathcal{F}(\mathcal{H})) \rightarrow B\big(\mathcal{F}(\mathcal{H}))$ is the normal UCP map defined by 
	\begin{equation}\label{pform}
		P_{\omega}(x) = \underset{i \in \Theta}{\sum} \omega_i l_i^* x l_i, \ x \in B\big(\mathcal{F}(\mathcal{H})\big),
	\end{equation}	
	where the series on the right converges in the strong operator topology. 
	This section begins with the observation that the Poisson boundary does not depend on the choice of the orthonormal basis and then proves a technical result (Proposition \ref{multiplications}) that establishes the multiplication rule in the algebra $H^{\infty}$. Next we show in Proposition \ref{state} that the restriction to $H^{\infty}$ of the vector state on $B\big(\mathcal{F}(\mathcal{H})\big)$ given by $x \mapsto \langle x \Omega, \Omega \rangle$, $x \in B\big(\mathcal{F}(\mathcal{H}))$,  is indeed a faithful normal state on $H^{\infty}$ and finally, we conclude this section with Theorem \ref{factor} which proves that $H^{\infty}$ is an infinite factor. 

	Before we proceed to prove the results, we pause with a digression concerning notations. We introduce some notations that will be frequently used throughout the article. We set $\omega_{I}=1$ for $I = ()$, the empty tuple, and for $I=i_1\dots i_k \in \Lambda^*$, we set $\omega_{I} = \omega_{i_1}\cdots \omega_{i_k}$. 
	
	With the notations as above, it follows from the definition of $P_{\omega}$ as given by Equation \eqref{pform} that given $x \in B\big(\mathcal{F}(\mathcal{H}))$ and $I, J \in \Lambda$, 
	\begin{align*}
		\big\langle P_{\omega}(x)(e_J), e_I \big\rangle 
		=\big\langle \underset{i \in \Theta}{\sum} \omega_i l_i^* x l_i e_J, e_I \big\rangle
		=  \underset{i \in \Theta}{\sum} \omega_i \big\langle x (e_{iJ}), e_{iI} \big\rangle.
	\end{align*} 
	Consequently,  an element $x \in B\big(\mathcal{F}(\mathcal{H})\big)$ is in $H^{\infty}$ if and only if it satisfies 
	\begin{equation}\label{pform1}
		\langle x(e_J), e_I\rangle = \underset{i \in \Theta}{\sum} \omega_i \big\langle x (e_{iJ}), e_{iI} \big\rangle \ \mbox{for all $I, J \in \Lambda$}.
	\end{equation}

	Since $r_i$ commutes with $l_j$ for all $i, j \in \Theta$, it is evident from the formula for $P_{\omega}$ that 
	$P_{\omega}(r_I)=r_I$ for all $I \in \Lambda$ and hence, $r_I, r_I^*\in H^{\infty}$.  
	Note also that by virtue of Lemma \ref{formula}, $x \circ y$, the Choi-Effros product of two elements $x, y \in H^{\infty}$, is given by
	\begin{equation}\label{formu}
		x \circ y = \ \mbox{SOT-}\lim_{n \rightarrow \infty} P_{\omega}^n(xy).
	\end{equation} 
	We now remark that the Poisson boundary does not depend on the choice of the orthonormal basis. Let $\{f_i : i \in \Theta\}$ be another orthonormal basis of $\mathcal{H}$ and consider the Markov operator $P_{\omega}^{\prime}$ on $B\big(\mathcal{F}(\mathcal{H})\big)$ defined by $$P_{\omega}^\prime (x) = \sum_{i \in \Theta} \omega_i l_{f_i}^* x l_{f_i}, \ x \in B \big(\mathcal{F}(\mathcal{H})\big).$$
	Our goal is to show that $H^\infty\Big(B\big(\mathcal{F}(\mathcal{H})\big), P_{\omega} \Big)$ and $H^\infty\Big(B\big(\mathcal{F}(\mathcal{H})\big), P_{\omega}^\prime \Big)$ are isomorphic as von Neumann algebras. 
	
	Given a unitary $U : \mathcal{H} \rightarrow \mathcal{H}$, recall that the corresponding second quantization $\Gamma_U : \mathcal{F}(\mathcal{H}) \rightarrow \mathcal{F}(\mathcal{H})$ is the unitary operator on $\mathcal{F}(\mathcal{H})$ defined by $\Gamma_U(\Omega) = \Omega$ and $\Gamma_U|_{\mathcal{H}^{\otimes n}} = U^{\otimes n}$ for $n \geq 1$.
	Clearly, $\Gamma_U$ induces the automorphism $\widetilde{\Gamma}_U$ of $B \big(\mathcal{F}(\mathcal{H})\big)$ given by $\widetilde{\Gamma}_U(x) = \Gamma_U x \Gamma_U^*$ for $x \in B \big(\mathcal{F}(\mathcal{H})\big)$. One can easily verify that
	\begin{equation}\label{eqquan}
		\widetilde{\Gamma}_U(p_{\Omega}) = p_{\Omega}, \widetilde{\Gamma}_U(l_{\xi}) = l_{U \xi}, \ \mbox{and} \ \widetilde{\Gamma}_U(r_{\xi}) = r_{U \xi} \ \mbox{where} \ \xi \in \mathcal{H}.
	\end{equation} 
	Consider the unitary $V$ on $\mathcal{H}$ that takes $e_i$ to $f_i$ for $i \in \Theta$.
It is not hard to see that $$ H^\infty\Big(B\big(\mathcal{F}(\mathcal{H})\big), P_{\omega} \Big) \ni x \mapsto \widetilde{\Gamma}_V(x) \in H^\infty\Big(B\big(\mathcal{F}(\mathcal{H})\big), P_{\omega}^\prime \Big)$$ is a $*$-algebra isomorphism. 

	Our next proposition 
	computes several multiplication formulae in $H^{\infty}$ which will be frequently used in the sequel.
	\begin{proposition}\label{multiplications}
		Let $x \in H^{\infty}$ and let $I, J \in \Lambda^*$. Then:
		\begin{itemize}
			\item[(i)] $x \circ r_I = x r_I$.
			\item[(ii)] $r_I^* \circ x = r_I^* x$.
			\item[(iii)] $r_J^*\circ  x \circ r_I = r_J^* x r_I$.
			\item[(iv)] $r_I \circ x = r_I x + \overset{\lvert I \rvert}{\underset{t=1}{\sum}} \omega_{(I^{op})_t} r_{I_{\lvert I \rvert -t}}p_{\Omega} x l_{(I^{op})_t}$.
			In particular, $r_i\circ x=r_ix+\omega_i p_{\Omega}xl_i$, $i \in \Theta$.
			\item[(v)] 
			$x\circ r_I^* = xr_I^*+ \overset{\lvert I \rvert}{\underset{t=1}{\sum}} \omega_{(I^{op})_t}l_{(I^{op})_t}^* x p_{\Omega} r_{I_{\lvert I \rvert -t}}^*$.	
			In particular, $x\circ r_i^*=xr_i^*+\omega_i l_i^*x p_{\Omega}$, $i \in \Theta$.
			\item[(vi)] $x\circ r_I \circ  r_{J}^* = x r_{I} r_{J}^* + \overset{\lvert J \rvert}{\underset{t=1}{\sum}} \omega_{(J^{op})_t}l_{J^{op}_t}^* x r_I p_{\Omega} r_{J_{\lvert J \rvert-t}}^*$.
			In particular, for any $i, j \in \Theta$, $x\circ r_i\circ  r_j^*=xr_ir_j^*+\omega_jl_j^*xr_i p_{\Omega}$.
			\item[(vii)] $r_I \circ r_J^* \circ x = r_I r_J^* x + \overset{\lvert I \rvert}{\underset{t=1}{\sum}} \omega_{(I^{op})_t} r_{I_{\lvert I \rvert-t}} p_{\Omega} r_J^* x l_{(I^{op})_t}$.
			In particular, for any $i, j \in \Theta$, $r_i\circ r_j^*\circ x=r_ir_j^*x+\omega_ip_{\Omega}r_j^*xl_i $.
		\end{itemize}	
	\end{proposition}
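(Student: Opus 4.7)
The plan is to exploit the explicit formula $x\circ y = \mathrm{SOT}\text{-}\lim_{n}P_\omega^n(xy)$ from Lemma~\ref{formula} together with the commutation relations of Lemma~\ref{form1}, and to compute each iterate $P_\omega^n$ directly. The key observations that drive everything are: (a) the left creation/annihilation operators $l_j, l_j^*$ commute with every right operator $r_i^*$; (b) moving $l_j^*$ past $r_i$ produces the defect $\delta_{i,j}p_\Omega$; and (c) $p_\Omega r_i = 0 = l_i^* p_\Omega$, so an inserted $p_\Omega$ acts as a ``terminator'' preventing most defects from contributing.

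First I would handle (i) and (ii). For (i), compute $P_\omega(xr_I)=\sum_i \omega_i l_i^* x r_I l_i$; since $r_I$ commutes with every $l_i$ this equals $(\sum_i \omega_i l_i^* x l_i)r_I = P_\omega(x)r_I = xr_I$. Hence $xr_I$ is harmonic and $x\circ r_I = xr_I$. For (ii), use that $l_j^*$ and $r_I^*$ commute (adjoint of $l_j r_I = r_I l_j$) to push $r_I^*$ to the left of the sum, obtaining $P_\omega(r_I^*x) = r_I^*P_\omega(x) = r_I^*x$. Statement (iii) is then immediate by associativity: $r_J^*\circ x\circ r_I = r_J^*\circ(xr_I) = r_J^*xr_I$, noting $xr_I\in H^\infty$ by (i).

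The heart of the argument is (iv). Writing $I=i_1\cdots i_k$ and using $l_j^* r_{i_s} = r_{i_s}l_j^* + \delta_{j,i_s}p_\Omega$ repeatedly, one gets
\begin{align*}
l_j^* r_I = r_I l_j^* + \sum_{s=1}^{k}\delta_{j,i_s}\, r_{i_1}\cdots r_{i_{s-1}}\, p_\Omega\, r_{i_{s+1}}\cdots r_{i_k}.
\end{align*}
Because $p_\Omega r_{i_t}=0$ for any $t$, every term with $s<k$ vanishes, so only the $s=k$ term survives, yielding
\[
P_\omega(r_I x) = r_I x + \omega_{i_k}\, r_{I_{k-1}}\, p_\Omega\, x\, l_{i_k}.
\]
Iterating: apply $P_\omega$ once more to the new summand. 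The factor $p_\Omega$ sits just to the right of $r_{I_{k-1}}$, and by the same mechanism $P_\omega$ peels off one further $r$, producing the $t=2$ term $\omega_{i_k}\omega_{i_{k-1}}\, r_{I_{k-2}}\, p_\Omega\, x\, l_{i_k}l_{i_{k-1}}$, while leaving the previously produced terms invariant (they are killed after one step because $l_j^* p_\Omega=0$, so they are $P_\omega$-fixed). After $|I|=k$ iterations the sequence stabilizes, giving the claimed closed form for $r_I\circ x$.

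Statement (v) then follows from (iv) by taking the adjoint identity applied to $x^*$ in place of $x$ and using $(r_I\circ x^*)^*=x\circ r_I^*$, so no further calculation is needed. Finally (vi) and (vii) reduce to (iv) and (v) via associativity of the Choi--Effros product: write $x\circ r_I\circ r_J^* = (x\circ r_I)\circ r_J^* = (xr_I)\circ r_J^*$ and apply (v), and similarly $r_I\circ r_J^*\circ x = r_I\circ(r_J^*x)$ and apply (iv). The only delicate point in the whole proof is the bookkeeping in the iteration for (iv): one must verify carefully that (a) each already-produced ``defect'' term is annihilated by subsequent applications of $P_\omega$ (thanks to $l_j^* p_\Omega=0$), and (b) exactly one new term is produced at each step, indexed by the next letter of $I^{op}$, so that the telescoping matches the sum in the statement. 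This indexing and the verification that the iteration terminates after $|I|$ steps is where the main care is required.
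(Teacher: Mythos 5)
Your overall strategy coincides with the paper's: parts (i)--(iii) and (v)--(vii) are handled exactly as in the paper (harmonicity of $xr_I$, adjoints, and associativity of the Choi--Effros product), and the only substantive difference is in (iv), where you iterate $P_{\omega}$ directly on $r_Ix$ for a general word $I$ instead of inducting on $\lvert I\rvert$ via $r_I\circ x=r_{i_1}\circ(r_{i_2\cdots i_{m+1}}\circ x)$ as the paper does. Your one-step formula $P_{\omega}(r_Ix)=r_Ix+\omega_{i_k}r_{I_{k-1}}p_{\Omega}xl_{i_k}$ is correct, and the direct iteration is a perfectly viable alternative route to the closed form.

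However, your account of what happens under further applications of $P_{\omega}$ is wrong as stated, and in fact self-contradictory (``leaving the previously produced terms invariant'' versus ``they are killed after one step \dots so they are $P_{\omega}$-fixed''). Writing $D_t=\omega_{(I^{op})_t}\,r_{I_{k-t}}\,p_{\Omega}\,x\,l_{(I^{op})_t}$, the intermediate defect terms are neither fixed nor annihilated: for $1\le t\le k-1$ one has $P_{\omega}(D_t)=D_{t+1}$, because the head $r_{I_{k-t}}$ still ends in a right creation operator, so the same defect mechanism fires again, while $l_j^*p_{\Omega}=0$ kills the non-defect part; only the last term $D_k=\omega_{I^{op}}\,p_{\Omega}\,x\,l_{I^{op}}$, whose $r$-prefix is empty, satisfies $P_{\omega}(D_k)=0$. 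If the defects really were annihilated by subsequent applications of $P_{\omega}$, as your point (a) asserts, the limit of $P_\omega^n(r_Ix)$ would not be the stated sum. The correct bookkeeping is $P_{\omega}^n(r_Ix)=r_Ix+\sum_{t=1}^{\min(n,k)}D_t$, which follows from the two relations $P_{\omega}(r_Ix)=r_Ix+D_1$ and $P_{\omega}(D_t)=D_{t+1}$; the sequence stabilizes after $k=\lvert I\rvert$ steps and its limit is exactly the formula in (iv). With that correction your argument is complete. The paper's induction on $\lvert I\rvert$ packages the same computation so that only the single-letter iteration (which genuinely does stabilize after one step) ever has to be analysed, which is precisely how it sidesteps this bookkeeping.
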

	\begin{proof}
		\begin{itemize}
			\item[(i)] A simple computation, using the fact that $r_I$ commutes with $l_i$ for $i \in \Theta$, shows that 
			\begin{align*}
				P_{\omega}(x r_I) = \underset{i \in \Theta}{\sum} \omega_i l_i^* x r_I l_i = \big(\underset{i \in \Theta}{\sum} \omega_i l_i^* x l_i\big) r_I =  P_{\omega}(x) r_I = x r_I
			\end{align*}
			and hence, $xr_I \in H^{\infty}$. It now follows by an appeal to Equation \eqref{formu} that $x \circ r_I = xr_I$. 
			\item[(ii)] Follows from part (i) of the proposition by taking adjoint.
			\item[(iii)] Clearly, $r_J^* \circ x \circ r_I = r_J^* \circ (xr_I) = r_J^*xr_I$ where the first equality follows from part(i) of the proposition whereas the second equality follows from part (ii) of the proposition.			
			\item[(iv)] We prove the result by induction on $\lvert I \rvert$, the length of the sequence $I$. Let $\lvert I \rvert = 1$, say, $I = i$ for some $i \in \Theta$. Then 
			\begin{align*}
				P_{\omega}(r_i x) &= \sum_{j \in \Theta} \omega_j l_j^* (r_i x) l_j
				= \sum_{j \in \Theta} \omega_j (l_j^* r_i) x l_j \\
				&= \sum_{j \in \Theta} \omega_j (r_i l_j^* + \delta_{i, j} p_{\Omega}) x l_j \  \ \mbox{\big(by Lemma \ref{form1}(ii)\big)}\\	
				&= r_i \Big(\sum_{j \in \Theta} \omega_j l_j^* x l_j \Big) + \omega_i p_{\Omega}x l_i\\
				&= r_i P_{\omega}(x) + \omega_i p_{\Omega}x l_i = r_i x + \omega_i p_{\Omega}x l_i, 
			\end{align*}
			and hence,
			\begin{align*}
				P^2_{\omega} (r_i x) &= P_{\omega}(r_i x) + \omega_i P_{\omega}\big(p_{\Omega}x l_i\big)\\&= P_{\omega}(r_i x) + \omega_i \sum_{j \in \Theta} \omega_j l_j^* \big(p_{\Omega} x l_i\big) l_j\\
				& = P_{\omega}(r_i x) \  \ \big(\mbox{since} \ l_j^* p_{\Omega} = 0 \ \mbox{by Lemma \ref{form1}(i)}\big). 
			\end{align*}
			Consequently, $P_{\omega}^n(r_ix) = P_{\omega}(r_ix)$ for all $n \geq 1$ and so, an appeal to Equation \eqref{formu} immediately shows that
			\begin{equation}\label{form2}
				r_i \circ x = P_{\omega}(r_i x) = r_i x + \omega_i p_{\Omega}x l_i.
			\end{equation}
			Thus the result is true for all $r_I$ with $|I|=1$.
			Suppose that the result is true for all sequences of length $m$ for some $m \geq 1$. We show that the result is true for all sequences of length $m+1$. Let $I$ be a sequence of length $m+1$, say, $I = i_1 i_2 \cdots i_{m+1}$. An appeal to Proposition \ref{multiplications}(i) yields that $r_I = r_{i_1} \cdots r_{i_{m+1}} =
			r_{i_1} \circ \cdots \circ r_{i_{m+1}}$. Thus,
			\begin{align*}
				r_I \circ x = r_{i_1} \circ (r_{i_2} \circ \cdots r_{i_{m+1}} \circ x ) = r_{i_1} \circ (r_J \circ x)
			\end{align*}
			where $J = i_2 \cdots i_{m+1}$. Since $\lvert J \rvert = m$, by induction hypothesis we have 
			\begin{align*}
				r_J \circ x = r_Jx + \sum_{t = 1}^{m} \omega_{J^{op}_t} r_{J_{m-t}} p_{\Omega} x l_{J^{op}_t}, 
			\end{align*}
			and hence, an application of Equation \eqref{form2} shows that
			\begin{align*}
				r_I \circ x &= r_{i_1} \circ (r_J \circ x) \\
				&= r_{i_1}(r_J \circ x) + \omega_{i_1} p_{\Omega}(r_J \circ x)l_{i_1} \ \big(\mbox{by an appeal to Equation \eqref{form2}}\big)\\
				&= r_{i_1}\big(r_J x + \sum_{t = 1}^{m} \omega_{J^{op}_t} r_{J_{m-t}} p_{\Omega} x l_{J^{op}_t}\big) + \omega_{i_1} p_{\Omega}\big(r_J x + \sum_{t = 1}^{m} \omega_{J^{op}_t} r_{J_{m-t}} p_{\Omega} x l_{J^{op}_t}\big) l_{i_1}	\\
				& = r_I x + \sum_{t = 1}^{m} \omega_{J^{op}_t} r_{i_1} r_{J_{m-t}} p_{\Omega} x l_{J^{op}_t} + \omega_{i_1} \omega_{J^{op}} p_{\Omega} x l_{J^{op}} l_{i_1}	\\
				&\big(\mbox{since, by Lemma \ref{form1}}, \ p_{\Omega} r_i = 0 \ \mbox{for} \ i \in \Theta, \ \mbox{so} \ p_{\Omega} r_{J_{m-t}} = 0 \ \mbox{for} \ 0 \leq t < m\big)\\
				&= r_I x + \sum_{t = 1}^{m} \omega_{I^{op}_t} r_{I_{m+1-t}} p_{\Omega} x l_{I^{op}_t} + \omega_{I^{op}} p_{\Omega} x l_{I^{op}} \\
				&= r_I x + \sum_{t = 1}^{m+1} \omega_{I^{op}_t} r_{I_{m+1-t}} p_{\Omega} x l_{I^{op}_t}.
			\end{align*}
			Thus the result is true for all sequences of length $m+1$. Hence, by the priciple of mathematical induction, the result is true for all sequences in $\Lambda^*$.
			\item[(v)] Follows from part (iv) of the proposition by taking adjoint.
			\item[(vi)] By part (i) of the proposition, we have $x \circ r_I \circ r_J^* = (x r_I) \circ r_J^*$ and then an application of part (v) of the proposition yields the desired result.
			\item[(vii)] Follows from part (vi) of the proposition by taking adjoint. 
		\end{itemize}
	\end{proof}
	Now consider the vector state on $B\big(\mathcal{F}(\mathcal{H})\big) $ induced by the vacuum vector $\Omega$ , i.e, consider the following state
	\begin{align*}
		x\mapsto \langle x\Omega ,\Omega \rangle, \  \ x \in B\big(\mathcal{F}(\mathcal{H})\big).  
	\end{align*}
	This is a normal state on $B\big(\mathcal{F}(\mathcal{H})\big)$.
	Let $\varphi$ denote its restriction to $H^{\infty}$, that is,
	\begin{align*}
		\varphi : H^{\infty} \ni x \mapsto \langle x\Omega ,\Omega \rangle \in \mathbb{C}.
	\end{align*} 
	We assert that $\varphi$ is indeed \textit{a faithful, normal state on the von Neumann algebra}  $H^{\infty}$. The following lemma plays a crucial role towards establishing our assertion.
	\begin{lemma}{\label{lemma 1}} 
		Let $x$ be a positive element of $B\big(\mathcal{F}(\mathcal{H})\big)$ such that $x \in H^{\infty}$. Then $\langle x\Omega, \Omega\rangle =0 $ implies $x = 0$.
	\end{lemma}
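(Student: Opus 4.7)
The plan is to exploit the harmonicity condition \eqref{pform1} iteratively, combined with positivity of $x$, to show that $x$ annihilates every simple basis vector in $\mathcal{B}$.

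First, I would iterate the harmonicity identity \eqref{pform1}. Applying it once with $I = J = ()$ gives
$$\langle x\Omega, \Omega\rangle = \sum_{i \in \Theta} \omega_i \langle x e_i, e_i\rangle,$$
and applying it repeatedly (each time replacing the expression for $\langle x e_K, e_K\rangle$ by its sum representation) yields, for every $n \geq 1$,
$$\langle x\Omega, \Omega\rangle = \sum_{K \in \Theta^n} \omega_K \langle x e_K, e_K\rangle,$$
where $\omega_K = \omega_{k_1}\cdots\omega_{k_n}$ for $K = k_1 \cdots k_n$. A short induction on $n$, using \eqref{pform1} at each stage, justifies this formula.

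Next I would invoke positivity. Since $x \geq 0$, every quantity $\langle x e_K, e_K\rangle$ is nonnegative, and since each $\omega_K > 0$, the hypothesis $\langle x\Omega, \Omega\rangle = 0$ forces
$$\langle x e_K, e_K\rangle = 0 \quad \text{for every } K \in \Theta^n, \ n \geq 1.$$
Because $x \geq 0$, for any vector $\eta$ one has $\|x^{1/2}\eta\|^2 = \langle x\eta, \eta\rangle$; hence $x^{1/2} e_K = 0$, and therefore $x e_K = 0$, for every $K \in \Lambda^*$. Combined with $\langle x\Omega, \Omega\rangle = 0$, which similarly gives $x\Omega = 0$, we conclude that $x$ kills every element of the orthonormal basis $\mathcal{B} = \{e_K : K \in \Lambda\}$ of $\mathcal{F}(\mathcal{H})$, so $x = 0$.

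There is essentially no obstacle here beyond carefully setting up the iteration of \eqref{pform1}; the whole argument reduces to the interplay between the harmonicity equation specialized along the diagonal $I = J$ and the positivity of $x$. The key technical point is simply that the iteration produces only nonnegative terms, which lets one pass from vanishing at $\Omega$ to vanishing on every simple basis element.
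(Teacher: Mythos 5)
Your proof is correct and follows essentially the same route as the paper's: the paper also iterates the harmonicity identity along the diagonal (phrased as repeatedly applying the argument to the positive harmonic elements $r_I^* x r_I$) to get $\langle x e_K, e_K\rangle = 0$ for all $K$, and then concludes $x = 0$ from positivity. Your explicit $x^{1/2}$ step just spells out what the paper leaves implicit in "it suffices to show".
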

	\begin{proof}
		Since $x$ is a positive element of $B\big(\mathcal{F}(\mathcal{H})\big)$ such that $x \in H^{\infty}$ and $\langle x\Omega, \Omega\rangle =0$, in order to prove that $x = 0$, it suffices to show that $\langle x e_I, e_I \rangle = 0$ for all $I \in \Lambda^*$. Since $x \in H^{\infty}$, we have that
		\begin{align*}
			0 = \langle x\Omega, \Omega\rangle
			= \big\langle \big(\sum_{i \in \Theta} \omega_i l_i^* x l_i \big) \Omega, \Omega \big\rangle
			= \sum_{i \in \Theta} \omega_i \langle l_i^* x l_i \Omega, \Omega \rangle = \sum_{i \in \Theta} \omega_i \langle x e_i, e_i \rangle.
		\end{align*}
		As $x$ is positive element of $B\big(\mathcal{F}(\mathcal{H})\big)$, $\langle x e_i, e_i \rangle \geq 0$ for all $i \in \Theta$ and since $\omega_i > 0$ for each $i \in \Theta$, it follows that 
		\begin{align*}
			\langle x e_i, e_i \rangle = 0 \ \mbox{for all} \ i \in \Theta, \ \mbox{or, equivalently,}  \ \langle r_i^* x r_i \Omega, \Omega \rangle = 0, \ \mbox{for each} \ i \in \Theta.
		\end{align*} 
		As $x$ is a positive element of $B\big(\mathcal{F}(\mathcal{H})\big)$, so is $r_i^*xr_i$ for any $i \in \Theta$. Since
		$\langle r_i^* x r_i \Omega, \Omega \rangle = 0$ and $r_i^* x r_i  \in H^\infty$, the similar argument as before shows that
		\begin{align*}
			\langle r_j^* r_i^* x r_i r_j \Omega, \Omega \rangle = 0, \ \mbox{that is,} \ \langle r_{ij}^* x r_{ij} \Omega, \Omega\rangle = 0 \ \mbox{for all} \ i, j \in \Theta.
		\end{align*} 
		Continuing this way we conclude that
		\begin{align*}
			\langle r_I^* x r_I \Omega, \Omega \rangle = 0 \ \mbox{for all} \ I \in \Lambda^*, \ \mbox{or, equivalently,} \ \langle x e_I, e_I \rangle = 0, \ \mbox{for all} \ I \in \Lambda^*.
		\end{align*}
		This completes the proof.
	\end{proof}
	Let $x$ be a positive element of $H^{\infty}$. As $H^{\infty}$ is a $C^*$-algebra, so, $x = y^* \circ y$ for some $y \in H^{\infty}$. It follows from the product rule as given by Equation \eqref{formu} that
	\begin{align*}
		x = \text{SOT-} \lim_{n \rightarrow \infty} P_\omega^n(y^* y)
	\end{align*}
	and since $P_\omega^n(y^*y)$ is positive element of $B\big(\mathcal{F}(\mathcal{H})\big)$ for all $n \geq 0$, we see that $x$, being the strong limit of a sequence of positive elements of $B\big(\mathcal{F}(\mathcal{H})\big)$, is positive in $B\big(\mathcal{F}(\mathcal{H})\big)$. Thus we conclude that a positive element of $H^{\infty}$ is also a positive element of $B\big(\mathcal{F}(\mathcal{H})\big)$. As an immediate consequence of this observation and Lemma \ref{lemma 1} we obtain that:
	\begin{proposition}\label{state}
		With notations as above, the linear functional $\varphi$ is a faithful, normal state on $H^{\infty}$.
	\end{proposition}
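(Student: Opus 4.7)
The proof should be short because almost all of the work is already contained in Lemma \ref{lemma 1} and the observation immediately preceding the statement of Proposition \ref{state} (namely, that every element that is positive in $H^\infty$ with respect to the Choi--Effros product is also positive as an operator in $B\big(\mathcal{F}(\mathcal{H})\big)$). My plan is to check unitality, positivity, normality, and faithfulness one by one, each reducing at once to a property of the vector state $\omega_\Omega : x \mapsto \langle x\Omega, \Omega\rangle$ on $B\big(\mathcal{F}(\mathcal{H})\big)$.

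Unitality is immediate: the unit of the Choi--Effros $C^*$-algebra structure on $H^\infty$ is the identity operator on $\mathcal{F}(\mathcal{H})$, and $\varphi(1) = \langle \Omega, \Omega\rangle = 1$. For positivity, take $x \geq 0$ in $H^\infty$; the observation cited above upgrades $x$ to a positive element of $B\big(\mathcal{F}(\mathcal{H})\big)$, and hence $\varphi(x) = \langle x\Omega, \Omega\rangle \geq 0$. For normality, I would invoke the fact recorded in the introduction that $H^\infty$ is a $\sigma$-weakly closed operator system in $B\big(\mathcal{F}(\mathcal{H})\big)$, so by uniqueness of the predual, the weak-$*$ topology of $H^\infty$ as a dual Banach space (equivalently, the $\sigma$-weak topology of the von Neumann algebra structure supplied by Sakai's theorem) coincides with the topology inherited from $B\big(\mathcal{F}(\mathcal{H})\big)$. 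Since $\omega_\Omega$ is $\sigma$-weakly continuous on $B\big(\mathcal{F}(\mathcal{H})\big)$, its restriction $\varphi$ is $\sigma$-weakly continuous on $H^\infty$, i.e.\ normal.

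Finally, for faithfulness, suppose $x \in H^\infty$ is Choi--Effros positive with $\varphi(x) = 0$. Once more the preceding observation ensures that $x$ is a positive element of $B\big(\mathcal{F}(\mathcal{H})\big)$ lying in $H^\infty$, and the hypothesis gives $\langle x\Omega, \Omega\rangle = 0$, so Lemma \ref{lemma 1} forces $x = 0$. There is no real obstacle in this argument; the only point that merits a moment's attention is the identification of the two candidate weak-$*$ topologies on $H^\infty$ for the normality step, which is handled by uniqueness of the predual of a $\sigma$-weakly closed subspace of $B\big(\mathcal{F}(\mathcal{H})\big)$.
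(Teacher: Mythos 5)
Your proposal is correct and follows essentially the same route as the paper: normality via the $\sigma$-weak closedness of $H^{\infty}$ and the normality of the vector state on $B\big(\mathcal{F}(\mathcal{H})\big)$, and faithfulness by combining the observation that Choi--Effros-positive elements of $H^{\infty}$ are positive operators with Lemma \ref{lemma 1}. The extra remarks on unitality, positivity, and the identification of the weak-$*$ topologies are harmless elaborations of what the paper leaves implicit.
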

	\begin{proof}
		Since the map 
		\begin{align*}
			x \mapsto \langle x \Omega, \Omega \rangle, \ x \in B\big(\mathcal{F}(\mathcal{H})\big),
		\end{align*}
		is a normal state on $B\big(\mathcal{F}(\mathcal{H})\big)$ and $H^{\infty}$ is a $\sigma$-weak closed operator system in $B\big(\mathcal{F}(\mathcal{H})\big)$, $\varphi$ is a normal state on $H^{\infty}$. To prove that $\varphi$ is faithful, we note that given a positive element $x$ of $H^{\infty}$ such that $\varphi(x) = 0$, by virtue of the discussion preceding this proposition, it follows that $x$ is indeed a positive element of $B\big(\mathcal{F}(\mathcal{H})\big)$ and then an appeal to Lemma \ref{lemma 1} immediately shows that $x = 0$, completing the proof.
	\end{proof}
	The next result is an easy consequence of Proposition \ref{multiplications}.
	\begin{coro}\label{phi}
		Let $x\in H^{\infty}$ and $I,J\in \Lambda$. Then:
		\begin{itemize}\addtolength{\itemsep}{0.4cm}
			\item[(i)] $\varphi (x\circ r_{J}^*)=\omega_{J} \langle x\Omega, r_{J}\Omega \rangle =\omega_{J} \varphi(r_J^*\circ x)$.
			\item[(ii)] $\varphi ( r_{J}\circ x)=\omega_{J}\langle xr_{J}\Omega, \Omega \rangle =\omega_{J}\varphi(x\circ r_J)$.
			\item[(iii)] $\varphi (x\circ r_{I}\circ r_{J}^*)=\omega_{J} \langle xr_{I}\Omega, r_{J}\Omega \rangle =\omega_{J}\varphi (r_J^*\circ x\circ r_I)$.
			\item[(iv)] $\varphi ( r_{I} \circ r_{J}^*\circ  x)=\omega_{I} \langle xr_{I}\Omega, r_{J}\Omega \rangle =\omega_{I}\varphi (r_J^*\circ x\circ r_I)$.
		\end{itemize}
	\end{coro}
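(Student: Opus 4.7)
The approach is to expand each Choi--Effros product using the relevant part of Proposition \ref{multiplications} and then evaluate the resulting operator at the vacuum vector $\Omega$. Two simple facts will collapse the bulky sums of Proposition \ref{multiplications}(iv)--(vii) to a single surviving term: by Lemma \ref{form1}(i), $r_K^*\Omega = 0$ for every $K \in \Lambda^*$, and $p_\Omega\Omega = \Omega$. A small bookkeeping point is that $r_K\Omega = e_{K^{op}} = l_{K^{op}}\Omega$, which is the mechanism that turns the adjoint factors $l_{(J^{op})_t}^*$ appearing in Proposition \ref{multiplications} into factors of the form $r_J$ inside the inner product.

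For part (i), I would invoke Proposition \ref{multiplications}(v) to write $x\circ r_J^* = xr_J^* + \sum_{t=1}^{|J|} \omega_{(J^{op})_t}\, l_{(J^{op})_t}^*\, x\, p_\Omega\, r_{J_{|J|-t}}^*$ and apply both sides to $\Omega$. The leading term $xr_J^*\Omega$ vanishes since $|J|\ge 1$, and within the sum only the term $t = |J|$ survives, because the rightmost factor $r_{J_{|J|-t}}^*$ must annihilate $\Omega$ whenever $|J|-t\ge 1$. The surviving piece is $\omega_J\, l_{J^{op}}^* x\Omega$, so that $\varphi(x\circ r_J^*) = \omega_J\,\langle x\Omega,\, l_{J^{op}}\Omega\rangle = \omega_J\,\langle x\Omega, r_J\Omega\rangle$. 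The second equality then follows immediately from Proposition \ref{multiplications}(ii), which gives $r_J^*\circ x = r_J^* x$ and hence $\varphi(r_J^*\circ x) = \langle r_J^* x\Omega, \Omega\rangle = \langle x\Omega, r_J\Omega\rangle$. Part (ii) is handled by the same pattern, starting from Proposition \ref{multiplications}(iv) (or by passing to adjoints in (i) applied to $x^*$): the only term that does not die upon pairing with $\Omega$ is the $t=|J|$ summand $\omega_J\, p_\Omega x\, l_{J^{op}}\Omega = \omega_J\, p_\Omega x r_J\Omega$, giving $\varphi(r_J\circ x) = \omega_J\langle xr_J\Omega, \Omega\rangle$, while $\varphi(x\circ r_J) = \langle xr_J\Omega, \Omega\rangle$ by Proposition \ref{multiplications}(i).

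Parts (iii) and (iv) reduce cleanly to parts (i) and (ii). By Proposition \ref{multiplications}(i)--(iii) we have the associative-type identities $x\circ r_I\circ r_J^* = (xr_I)\circ r_J^*$ and $r_I\circ r_J^*\circ x = r_I\circ(r_J^* x)$, together with $r_J^*\circ x\circ r_I = r_J^* x r_I$. Hence (iii) is exactly part (i) applied with $x$ replaced by $xr_I\in H^\infty$, and (iv) is part (ii) applied with $x$ replaced by $r_J^* x\in H^\infty$ (which accounts for the prefactor $\omega_I$ rather than $\omega_J$). No real obstacle is expected: the entire corollary is a substitution exercise, and the only care needed is to keep straight the indexing conventions $I$, $I^{op}$, $I_m$ and the identifications $r_J\Omega = e_{J^{op}}$, $l_J\Omega = e_J$ used to translate between $l$- and $r$-expressions.
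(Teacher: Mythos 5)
Your proposal is correct and follows essentially the same route as the paper: expand the Choi--Effros products via Proposition \ref{multiplications}, apply to $\Omega$ so that only the $t=|J|$ summand survives, and use $l_{J^{op}}\Omega = r_J\Omega$. The only cosmetic difference is that you verify (ii) and (iv) directly rather than by taking adjoints of (i) and (iii) as the paper does.
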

	\begin{proof}
		\begin{itemize}
			\item[(i)] If $J = ()$, the empty tuple, then there is nothing to prove. Let $J \in \Theta^k$ for some $k \geq 1$, say, $J = j_{i_1} j_{i_2} \cdots j_{i_k}$. 
			Since $r_i^* (\Omega) = 0$ for any $i \in \Theta$, it follows from the formula for $x \circ r_J^*$ as given by Proposition \ref{multiplications}(v) that 
			\begin{align*}
				(x \circ r_J^*) \Omega &= (x r_J^*) \Omega + \sum_{t = 1}^k \omega_{J^{op}_t} \big(l_{J^{op}_t}^* x p_{\Omega} r_{I_{k-t}}^*\big) \Omega
				= \omega_J \big(l_{J^{op}}^* x \big) \Omega
			\end{align*}
			and hence,
			\begin{align*}
				\varphi(x \circ r_J^*) &= \langle (x \circ r_J^*) \Omega, \Omega \rangle 
				= \omega_{J} \langle \big(l_{J^{op}}^* x\big) \Omega, \Omega \rangle 
				= \omega_J \langle x \Omega, l_{J^{op}} \Omega \rangle. 
			\end{align*}
			Since $l_{J^{op}} \Omega = r_J \Omega$, it follows from the preceding equation that
			\begin{align*}
				\varphi(x \circ r_J^*) = \omega_J \langle x \Omega, r_J \Omega \rangle 
				= \omega_J \langle (r_J^*x) \Omega, \Omega \rangle 
				= \omega_J \langle \big(r_J^* \circ x\big) \Omega, \Omega \rangle 
			\end{align*}
			where the last equality follows by an appeal to Proposition \ref{multiplications}(ii).
			\item[(ii)] Follows from part (i) by taking adjoints.
			\item[(iii)] Follows from part (i) of the corollary and Proposition \ref{multiplications}(i).
			\item[(iv)] Follows from part (iii) of the corollary by taking adjoints.
		\end{itemize}
	\end{proof}
	The following simple lemma will be useful in the proof of Theorem \ref{factor}. 
	\begin{lemma}{\label{lemma 2}}
		Let $x$ be an element of the center of 
		$H^{\infty}$. Then $\langle xr_{J}\Omega,\Omega \rangle =0$ for any $J \in \Lambda^*$. Further, if $I, J \in \Lambda$ are of the same length, then $r_{I}^*xr_{J}=\delta_{I,J}  x$. 
	\end{lemma}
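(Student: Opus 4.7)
The plan is to exploit the centrality of $x$ against the asymmetry of the Choi--Effros multiplication formulas from Proposition \ref{multiplications} and the vacuum--expectation identities from Corollary \ref{phi}; the faithfulness of $\varphi$ is not needed, only the fact that each $\omega_i \in (0,1)$, which follows from $|\Theta| > 1$ together with $\omega_i > 0$ and $\sum_{i \in \Theta} \omega_i = 1$.

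For the first assertion, fix $J \in \Lambda^*$. By Proposition \ref{multiplications}(i), $x \circ r_J = x r_J$, so $\langle x r_J \Omega, \Omega \rangle = \varphi(x \circ r_J)$. Centrality gives $x \circ r_J = r_J \circ x$, and Corollary \ref{phi}(ii) reads $\varphi(r_J \circ x) = \omega_J\, \varphi(x \circ r_J)$. Combining these,
$$(1 - \omega_J)\,\varphi(x \circ r_J) = 0.$$
Since every individual weight is strictly less than $1$, the product $\omega_J = \prod_{k} \omega_{j_k}$ is strictly less than $1$ for any nonempty $J$; hence $\varphi(x \circ r_J) = 0$, as desired.

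For the second assertion, the case $|I| = |J| = 0$ is trivial since then $r_I = r_J = 1$. Assume $|I| = |J| \geq 1$. Proposition \ref{multiplications}(iii) rewrites the object of interest as $r_I^* x r_J = r_I^* \circ x \circ r_J$. Centrality of $x$, together with associativity of $\circ$, yields
$$r_I^* \circ x \circ r_J = r_I^* \circ (x \circ r_J) = r_I^* \circ (r_J \circ x) = (r_I^* \circ r_J) \circ x.$$
Since $r_J \in H^\infty$, Proposition \ref{multiplications}(ii) applied with $y = r_J$ gives $r_I^* \circ r_J = r_I^* r_J$, and iterating the canonical relation $r_i^* r_j = \delta_{i, j}\cdot 1$ from Lemma \ref{form1}(ii) collapses $r_I^* r_J$ to $\delta_{I, J} \cdot 1$ whenever $|I| = |J|$. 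Therefore $r_I^* x r_J = \delta_{I, J}\, x$.

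There is no serious obstacle here: the argument is a short symbol manipulation once one observes that, on nonempty words, the Choi--Effros product of an adjoint creation operator with an element of $H^\infty$ coincides with the ordinary operator product, so centrality in $(H^\infty, \circ)$ transfers directly to a statement about ordinary operator multiplication cut down by the creation--annihilation relations. The only mild subtlety is the reliance on $|\Theta| > 1$ (which forces $\omega_i < 1$ for each $i$) in concluding the first claim.
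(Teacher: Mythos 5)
Your proof is correct and follows essentially the same route as the paper: the first claim is the identity $\varphi(x\circ r_J)=\varphi(r_J\circ x)=\omega_J\varphi(x\circ r_J)$ combined with $\omega_J\neq 1$, and the second is Proposition \ref{multiplications}(i)--(iii) together with centrality and $r_I^*r_J=\delta_{I,J}1$. The only cosmetic difference is that in the second part you commute $x$ to the right past $r_J$ while the paper commutes it to the left past $r_I^*$; both reduce to the same computation.
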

	\begin{proof}
		Note that for any $J \in \Lambda^*$,
		\begin{align*}
			\langle x r_J \Omega, \Omega \rangle 
			= \varphi (x\circ r_J)
			=\varphi(r_J \circ x)	= \omega_{J} \langle xr_{J}\Omega,\Omega \rangle, 		
		\end{align*}
		where the first equality follows from Proposition \ref{multiplications}(i), the second equality is a consequence of the fact that $x$ lies in the center of $H^{\infty}$ and the last equality follows from Corollary \ref{phi}(ii).
		Clearly, $\omega_J \neq 1$ as $\lvert J \rvert \geq 1$ and hence, it follows from the preceding equation that $\langle x r_J \Omega, \Omega \rangle = 0$.
		Further, if $I, J \in \Lambda$ are of the same length, then
		\begin{align*}
			r_I^* x r_J &=r_{I}^*\circ x\circ r_{J} \  \ \mbox{\big(by Proposition \ref{multiplications}(iii)\big)}\\
			&=x\circ r_{I}^*\circ r_{J} \ \ \big(\mbox{since} \ x \ \mbox{is in the center of} \  \ H^{\infty}\big)\\
			&= x\circ r_{I}^*r_{J} \  \ \big(\mbox{by Proposition \ref{multiplications}(ii)}\big)\\
			&= \delta _{I,J}x.
		\end{align*} 
	\end{proof}
	We are now ready to prove the main result of this section.
	\begin{theorem}{\label{factor}}
		$H^{\infty}$ is an infinite factor.
	\end{theorem}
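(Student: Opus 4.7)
My plan is to separate the two assertions: factoriality, and then infiniteness via exhibiting a proper isometry with respect to the Choi--Effros product.

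For factoriality, I would take an arbitrary $x$ in the center of $H^{\infty}$ and show that $x = \varphi(x) \cdot 1$ by computing all matrix coefficients $\langle x e_J, e_I\rangle$ with respect to the simple basis $\mathcal{B}$. Since $e_I = r_{I^{op}}\Omega$, this matrix coefficient equals $\langle r_{I^{op}}^{*} x r_{J^{op}} \Omega, \Omega\rangle$. When $|I| = |J|$, Proposition~\ref{multiplications}(iii) converts the operator product into the Choi--Effros product, centrality of $x$ lets me move it past $r_{I^{op}}^{*}$, and Proposition~\ref{multiplications}(ii) collapses $r_{I^{op}}^{*} \circ r_{J^{op}} = \delta_{I,J} \cdot 1$, yielding $\langle x e_J, e_I \rangle = \delta_{I,J}\, \varphi(x)$. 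This is essentially the content already packaged in Lemma~\ref{lemma 2}.

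The slightly less automatic case is $|I| \neq |J|$; by taking adjoints (using that $x^{*}$ is also central) I may assume $|J| > |I|$. I factor $J^{op} = J' K$ with $|J'| = |I|$ and $|K| \geq 1$, so that $r_{J^{op}} = r_{J'} r_K$. Then
\[
\langle x e_J, e_I\rangle = \langle r_{I^{op}}^{*} x r_{J'} r_K \Omega, \Omega\rangle.
\]
Applying the equal-length case to the inner piece $r_{I^{op}}^{*} x r_{J'}$ kills the expression unless $I^{op} = J'$, in which case it reduces to $\langle x r_K \Omega, \Omega\rangle$ with $|K|\geq 1$, and this vanishes by the first half of Lemma~\ref{lemma 2}. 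Hence $x = \varphi(x) \cdot 1$, establishing that $H^{\infty}$ is a factor.

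For infiniteness, I would simply produce a proper isometry in $(H^{\infty}, \circ)$. The natural candidate is $r_1 \in H^{\infty}$. Proposition~\ref{multiplications}(ii) combined with Lemma~\ref{form1}(ii) gives $r_1^{*} \circ r_1 = r_1^{*} r_1 = 1$, so $r_1$ is an isometry. To show it is not a unitary, I compute using Corollary~\ref{phi}(i):
\[
\varphi(r_1 \circ r_1^{*}) = \omega_1 \langle r_1 \Omega, r_1 \Omega\rangle = \omega_1.
\]
Since $\dim \mathcal{H} > 1$ forces $\omega_1 < 1 = \varphi(1)$, faithfulness of $\varphi$ (Proposition~\ref{state}) implies $r_1 \circ r_1^{*} \neq 1$. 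Thus $H^{\infty}$ contains a non-unitary isometry and is therefore an infinite factor.

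The main obstacle is really just the bookkeeping in the unequal-length case of the factoriality argument: the Choi--Effros product is not the operator product, so one has to be careful to invoke Proposition~\ref{multiplications} at the right moment to convert between the two before applying centrality. Everything else follows from the tools already at hand.
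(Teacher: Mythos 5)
Your proposal is correct and follows essentially the same route as the paper: factoriality is obtained by computing the matrix coefficients $\langle x e_J, e_I\rangle$ of a central element, handling the equal-length case via Lemma \ref{lemma 2} and reducing the unequal-length case to $\langle x r_K \Omega,\Omega\rangle=0$ for $|K|\geq 1$ after factoring the longer word; infiniteness comes from the proper isometry $r_1$ (the paper asserts $r_i\circ r_i^*\neq 1$ without the explicit check, and your computation $\varphi(r_1\circ r_1^*)=\omega_1<1$ via Corollary \ref{phi}(i) is a valid way to verify it).
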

	\begin{proof}
		To prove that $H^{\infty}$ is a factor, it suffices to show that given any $x$ in the center of $H^{\infty}$, $\langle x e_I, e_J \rangle = \delta_{I,J} \langle x \Omega, \Omega \rangle$ for all $I, J \in \Lambda$. Let us take an element $x$ in the center of $H^{\infty}$ and let $I, J \in \Lambda$, say, $I = i_1 i_2 \cdots i_k$ and $J = j_1 j_2 \cdots j_m$. First consider the case when $I, J$ are sequences of different lengths. Without loss of generality we may assume that $\lvert I \rvert > \lvert J \rvert$. Then $I^{op} = i_k \cdots i_2 i_1$. Set $I^{\prime} = i_k \cdots i_{k-m+1}$ and $I^{\prime \prime} = i_{k-m} \cdots i_2 i_1$ so that $I^{op} = I^{\prime} I^{\prime \prime}$. Since $k > m$, $\lvert I^{\prime \prime} \rvert \geq 1$ and hence, by virtue of Lemma \ref{lemma 2}, we obtain that $\langle x r_{I^{\prime \prime}} \Omega, \Omega \rangle = 0$. Consequently,
		\begin{align*}
			\langle x e_I, e_J \rangle &= \big\langle \big(xr_{I^{op}} \big) \Omega, r_{J^{op}} \Omega \big\rangle \\
			&= \big\langle \big(r_{J^{op}}^* x r_{I^{op}}\big) \Omega, \Omega \big\rangle \\
			&= \big\langle \big(r_{J^{op}}^* x r_{I^{\prime}} r_{I^{\prime \prime}} \big) \Omega, \Omega \big\rangle \ \  \big(\mbox{as $I^{op} = I^{\prime} I^{\prime \prime}$, so $r_{I^{op}} = r_{I^{\prime}} r_{I^{\prime \prime}}$} \big) \\
			&= \delta_{J^{op}, I^{\prime}}\langle xr_{I^{\prime \prime}}\Omega,\Omega \rangle \  \ \big(\mbox{ as $\lvert I^{\prime} \rvert = \lvert J \rvert = m$, by Lemma \ref{lemma 2}, $r_{J^{op}}^* x r_{I^{\prime}} =  \delta_{J^{op}, I^{\prime}} x$} \big)\\
			&=0.
		\end{align*}
		If $I, J \in \Lambda$ are of the same length, then an appeal to Lemma \ref{lemma 2} shows that
		\begin{align*}
			\langle x e_I, e_J \rangle = \big\langle \big(r_{J^{op}}^* x r_{I^{op}} \big) \Omega, \Omega \big\rangle  = \delta_{I^{op}, J^{op}} \langle x \Omega, \Omega \rangle = \delta_{I, J} \langle x \Omega, \Omega \rangle.
		\end{align*}
		Thus we have proved that $\langle x e_I, e_J \rangle = \delta_{I, J} \langle x \Omega, \Omega \rangle$ for all $I, J \in \Lambda$ and hence, $H^{\infty}$ is a factor.
		This is an infinite factor because $r_i^*\circ r_i=1$ but $r_i\circ r_i^*\neq 1$ for every $i \in \Lambda$.		
	\end{proof}

	\section{Modular Theory}
	Recall from Proposition \ref{state} that the functional
	$\varphi : H^{\infty} \ni x \mapsto \langle x \Omega, \Omega \rangle$ is a faithful normal state on $H^{\infty}$.
	Let $(\mathcal{H}_{\varphi},\pi_{\varphi},\Omega_{\varphi} )$ denote the GNS triple associated with the state $\varphi$ where $\mathcal{H}_{\varphi}$ is a Hilbert space, $\pi_{\varphi} :H^{\infty} \rightarrow B(\mathcal{H}_{\varphi})$ is the normal isometric $*$-homomorphism, and $\Omega_{\varphi} \in \mathcal{H}_{\varphi}$ is the cyclic and separating vector for $\pi_{\varphi}(H^{\infty})$ such that
	\begin{align*}
		\varphi(x) = \langle \pi_{\varphi}(x) \Omega_{\varphi}, \Omega_{\varphi} \rangle_{\varphi},  \ x \in H^{\infty},
	\end{align*} 
	where $\langle,\rangle_{\varphi}$ denotes the inner product of $\mathcal{H}_{\varphi}$.
	Let $S_0$ denote the densely defined closable 
	conjugate-linear operator, with domain $\pi_{\varphi}(H^{\infty})\Omega_{\varphi}$, defined by
	\begin{align*}
		S_0 \big(\pi_{\varphi}(x) \Omega_{\varphi}\big) = \pi_{\varphi}(x^*) \Omega_{\varphi},  \ x \in H^{\infty}.
	\end{align*}
	Let $S$ denote the closure of $S_0$ and let $F$ denote $S^*$. Let $S=J_{\varphi}\Delta_{\varphi} ^{\frac{1}{2}}$ be the polar decomposition of $S$. The operators $J_{\varphi}$ and $\Delta_{\varphi}$ are called, respectively, the modular conjugation and the modular operator associated with the pair $(H^{\infty}, \varphi)$. For each $t\in \mathbb{R}$, we denote by $\sigma_t^{\varphi}$ the $*$-automorphism of $H^{\infty}$ defined by $\sigma_t^{\varphi}(x):=\pi_{\varphi}^{-1}\left(\Delta_{\varphi} ^{it}\pi_{\varphi}(x)\Delta_{\varphi}^{-it}\right)$ for $x\in H^{\infty}$. The one-parameter group $\{\sigma_t^{\varphi}: t \in \mathbb{R}\}$ of $*$-automorphisms of $H^{\infty}$ is called the 
	group of modular automorphisms of $H^{\infty}$ associated with $\varphi$.
	It is a fact (see \cite[Lemma $3^0$, Page 279]{StZs}) that $\pi_{\varphi}(H^{\infty})\Omega_{\varphi}\subseteq \text{Dom}(\Delta_{\varphi})$ (domain of $\Delta_{\varphi}$).
	
	\begin{proposition}{\label{delta}}
		
		\begin{itemize}
			\item[(i)] For $I,J\in \Lambda$, $\Delta_{\varphi} \left(\pi_{\varphi}(r_{I}\circ r_{J}^*)\Omega_{\varphi}\right)=\frac{\omega_{I}}{\omega_{J}} \pi_{\varphi}(r_{I}\circ r_{J}^*)\Omega_{\varphi}$. 
			\item[(ii)] For $I,J\in \Lambda$, $J_{\varphi}\left(\pi_{\varphi}(r_{I}\circ r_{J}^*)\Omega_{\varphi}\right)=\sqrt{\frac{\omega_{J}}{\omega_{I}}}\pi_{\varphi}(r_J \circ r_I^*)\Omega_{\varphi}$.
			\item[(iii)] For all $I,J\in \Lambda$ and $t\in \mathbb{R}$, 
			$\sigma_t^{\varphi}(r_{I}\circ r_{J}^*)=(\frac{\omega_{I}}{\omega_{J}}) ^{\emph{i}t}r_{I}\circ r_{J}^*$.		
		\end{itemize}	
	\end{proposition}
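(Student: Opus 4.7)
My plan is to establish part (iii) first and to deduce parts (i) and (ii) as immediate consequences.

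For (iii), I shall invoke the uniqueness of the modular automorphism group: $\{\sigma_t^{\varphi}\}$ is the unique pointwise $\sigma$-weakly continuous one-parameter group of $*$-automorphisms of $H^{\infty}$ with respect to which $\varphi$ satisfies the KMS condition. It therefore suffices to construct an explicit candidate $\{\alpha_t\}_{t \in \mathbb{R}}$ with $\alpha_t(r_I \circ r_J^*) = (\omega_I/\omega_J)^{it}\, r_I \circ r_J^*$ and verify the KMS condition. The starting observation is that under the Choi--Effros product, $\{r_i\}_{i \in \Theta}$ satisfy Cuntz relations in $H^{\infty}$: Proposition \ref{multiplications}(i)--(ii) give $r_i^* \circ r_j = \delta_{ij}\, 1$, while the ``in particular'' case of Proposition \ref{multiplications}(vi) with $x = 1$ and $i = j$ yields $r_i \circ r_i^* = r_ir_i^* + \omega_i p_{\Omega}$, so that $\sum_{i} r_i \circ r_i^* = (1 - p_{\Omega}) + (\sum_i \omega_i) p_{\Omega} = 1$. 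Hence the linear span $A$ of $\{r_I \circ r_J^* : I, J \in \Lambda\}$ is a unital $*$-subalgebra of $H^{\infty}$, on which the prescription $\alpha_t(r_i) = \omega_i^{it} r_i$ preserves all relations and extends multiplicatively to a $*$-automorphism with the stated formula on $r_I \circ r_J^*$.

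From Corollary \ref{phi}(iv) with $x = 1$ one has $\varphi(r_I \circ r_J^*) = \omega_I \delta_{I,J}$, from which $\varphi$-invariance of each $\alpha_t$ on $A$ is immediate; a standard GNS unitary-implementation argument then extends $\alpha_t$ to a pointwise $\sigma$-weakly continuous one-parameter group of $*$-automorphisms of $H^{\infty}$. For the KMS condition it suffices, by linearity and $\sigma$-weak density, to verify $\varphi(xy) = \varphi\bigl(y\, \alpha_{-i}(x)\bigr)$ for $x = r_I \circ r_J^*$ and $y = r_K \circ r_L^*$. Applying Corollary \ref{phi}(iv) to both sides this reduces to the identity
\[
\omega_K \bigl\langle (r_I \circ r_J^*)\, r_K\Omega,\, r_L\Omega \bigr\rangle \;=\; \omega_J \bigl\langle (r_K \circ r_L^*)\, r_I\Omega,\, r_J\Omega \bigr\rangle.
\]
Expanding $r_I \circ r_J^*$ via Proposition \ref{multiplications}(vi) (with $x=1$) and analyzing the two scenarios in which the resulting inner products are nonzero --- namely when $J^{op}$ occurs as a suffix of $K^{op}$, or dually when $K^{op}$ is a proper prefix of $J^{op}$ --- the $p_{\Omega}$-weighted correction terms contribute exactly the factors $\omega_K$ and $\omega_J$ needed for the two sides to coincide. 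By Takesaki's uniqueness theorem $\alpha_t = \sigma_t^{\varphi}$, proving (iii).

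The deductions of (i) and (ii) are then formal. Since $S\Omega_{\varphi} = \Omega_{\varphi}$ and $J_{\varphi}\Omega_{\varphi} = \Omega_{\varphi}$, one has $\Delta_{\varphi}\Omega_{\varphi} = \Omega_{\varphi}$; hence $\Delta_{\varphi}^{it}\pi_{\varphi}(x)\Omega_{\varphi} = \pi_{\varphi}(\sigma_t^{\varphi}(x))\Omega_{\varphi}$ for every $x \in H^{\infty}$. Specializing to $x = r_I \circ r_J^*$ and applying (iii), Stone's theorem on the unitary group $t \mapsto \Delta_{\varphi}^{it}$ yields $\Delta_{\varphi}\pi_{\varphi}(r_I \circ r_J^*)\Omega_{\varphi} = (\omega_I/\omega_J)\pi_{\varphi}(r_I \circ r_J^*)\Omega_{\varphi}$, giving (i). For (ii), the polar decomposition $S = J_{\varphi}\Delta_{\varphi}^{1/2}$ applied to $\pi_{\varphi}(r_I \circ r_J^*)\Omega_{\varphi}$ produces $\pi_{\varphi}(r_J \circ r_I^*)\Omega_{\varphi}$ on one side (by definition of $S$) and $\sqrt{\omega_I/\omega_J}\, J_{\varphi}\pi_{\varphi}(r_I \circ r_J^*)\Omega_{\varphi}$ on the other (using (i)); rearrangement delivers (ii).

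The principal technical obstacle is the KMS case analysis: the $p_{\Omega}$-weighted correction terms arising from Proposition \ref{multiplications}(vi) must conspire precisely to generate the ratio $\omega_K/\omega_J$, and the bookkeeping requires careful attention to the reverse-tuple convention $r_I\Omega = e_{I^{op}}$ throughout.
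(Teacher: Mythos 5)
Your proof is correct, but it runs in the opposite direction from the paper's. The paper proves (i) directly: for $\xi=\pi_{\varphi}(r_I\circ r_J^*)\Omega_{\varphi}$ and arbitrary $x\in H^{\infty}$ it computes $\langle \Delta_{\varphi}\xi,\pi_{\varphi}(x)\Omega_{\varphi}\rangle_{\varphi}=\langle S\pi_{\varphi}(x)\Omega_{\varphi},S\xi\rangle_{\varphi}=\varphi(r_I\circ r_J^*\circ x^*)$, and then uses Corollary \ref{phi}(iv) followed by (iii) to convert this into $\frac{\omega_I}{\omega_J}\varphi(x^*\circ r_I\circ r_J^*)$, identifying $\xi$ as an eigenvector of $\Delta_{\varphi}$ with eigenvalue $\omega_I/\omega_J$; parts (ii) and (iii) then follow from the polar decomposition and Tomita's theorem exactly as in your final paragraph. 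You instead prove (iii) first by exhibiting a candidate group $\alpha_t$ and invoking the KMS characterization together with Takesaki's uniqueness theorem. This works, but note two things. First, your ``principal technical obstacle'' is illusory: the KMS identity $\varphi\bigl((r_I\circ r_J^*)\circ y\bigr)=\frac{\omega_I}{\omega_J}\varphi\bigl(y\circ(r_I\circ r_J^*)\bigr)$ is an immediate one-line consequence of Corollary \ref{phi}(iii) and (iv), since both sides are scalar multiples of the same inner product $\langle y r_I\Omega, r_J\Omega\rangle$; no expansion via Proposition \ref{multiplications}(vi) and no case analysis on prefixes is needed. (This identity is in fact the same computation the paper performs, just not packaged as a weak computation of $\langle\Delta_{\varphi}\xi,\cdot\rangle$.) Second, your appeal to uniqueness should be stated with a little more care: you verify KMS only on the $\sigma$-weakly dense $*$-subalgebra spanned by the $r_I\circ r_J^*$, so you need the standard fact that checking the KMS condition on a dense $*$-subalgebra of entire analytic elements suffices, plus the GNS implementation step to know $\alpha_t$ really extends to a $\sigma$-weakly continuous automorphism group of all of $H^{\infty}$ (using Proposition \ref{generator} and Corollary \ref{total}). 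What your route buys is conceptual clarity -- the eigenvalues of $\Delta_{\varphi}$ are read off from the KMS structure rather than computed -- at the price of importing the uniqueness machinery; the paper's route is shorter and self-contained, needing only the already-established Corollary \ref{phi} and the domain fact $\pi_{\varphi}(H^{\infty})\Omega_{\varphi}\subseteq\mathrm{Dom}(\Delta_{\varphi})$.
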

	\begin{proof}
		\begin{itemize}
			\item[(i)] 
			Note that for any $x \in H^{\infty}$,
			\begin{align*}
				\big\langle \Delta_{\varphi} \big(\pi_{\varphi}(r_I \circ r_J^*) \Omega_{\varphi}\big), \pi_{\varphi}(x) \Omega_{\varphi}\big\rangle_{\varphi} 
				&= \big\langle S^*S \big(\pi_{\varphi}(r_I \circ r_J^*) \Omega_{\varphi}\big),\pi_{\varphi}(x) \Omega_{\varphi}\big\rangle_{\varphi} \\
				&=\langle  \pi_{\varphi}(x^*) \Omega_{\varphi},\pi_{\varphi}(r_J \circ r_I^*)\Omega_{\varphi}\rangle_{\varphi} \  \ (\mbox{since $S$ is anti-linear})\\
				&=\varphi (r_{I}\circ r_{J}^*\circ x^*)\\
				&=\omega_{I} \varphi (r_{J}^* \circ x^* \circ r_I) \  \   (\mbox{by Corollary \ref{phi}(iv)})\\
				& = \frac{\omega_{I}}{\omega_{J}} \varphi(x^* \circ r_I \circ r_J^*) \  \ (\mbox{by Corollary \ref{phi}(iii)})\\
				&= \frac{\omega_{I}}{\omega_{J}}\big\langle \pi_{\varphi}(r_{I}\circ r_{J}^*)\Omega_{\varphi},\pi_{\varphi}(x) \Omega_{\varphi}\big\rangle_{\varphi}.
			\end{align*}
			This completes the proof.	
			\item[(ii)]
			It follows from part (i) of the proposition that
			\begin{align*}
				\Delta_{\varphi}^{\frac{1}{2}} \big(\pi_{\varphi}(r_{I}\circ r_{J}^*)\Omega_{\varphi}\big)=\sqrt{\frac{\omega_{I}}{\omega_{J}}} \pi_{\varphi}(r_{I}\circ r_{J}^*) \Omega_{\varphi}
			\end{align*} 
			and hence,
			\begin{align*}
				\Delta_{\varphi}^{-\frac{1}{2}} (\pi_{\varphi}(r_I \circ r_J^*) \Omega_{\varphi})=\sqrt{\frac{\omega_J}{\omega_I}} \pi_{\varphi}(r_{I}\circ r_{J}^*) \Omega_{\varphi}.
			\end{align*}
			Hence, the relation $S=J_{\varphi}\triangle_{\varphi} ^{\frac{1}{2}}$ yields that
			\begin{align*}
				J_{\varphi}\big(\pi_{\varphi}(r_{I}\circ r_{J}^*)\Omega_{\varphi}\big)=\sqrt{\frac{\omega_{J}}{\omega_{I}}}\pi_{\varphi}(r_J \circ r_I^*)\Omega_{\varphi}.
			\end{align*}
			\item[(iii)] It follows from the part (i) of the proposition that for $I, J \in \Lambda$, $$\Delta_{\varphi}^{\emph{i}t} \big(\pi_{\varphi} (r_I \circ r_J^*) \Omega_{\varphi} \big)= \Big(\frac{\omega_I}{\omega_{J}}\Big)^{\emph{i}t}\pi_{\varphi} (r_I \circ r_J^*) \Omega_{\varphi}$$ and consequently,
			$$\big(\Delta_{\varphi}^{\emph{i}t} \pi_{\varphi}(r_I \circ r_J^*) \Delta_{\varphi}^{-\emph{i}t} \big)\Omega_{\varphi} = \Big(\frac{\omega_I}{\omega_{J}}\Big)^{\emph{i}t}\pi_{\varphi} (r_I \circ r_J^*) \Omega_{\varphi}.$$
			Since by Tomita's theorem, $\Delta_{\varphi}^{it} \pi_{\varphi}(H^{\infty}) \Delta_{\varphi}^{-it} =\pi_{\varphi}( H^{\infty})$ for all $t \in \mathbb{R}$ and
			since $\Omega_{\varphi}$ is a separating vector for $\pi_{\varphi}(H^{\infty})$, it follows from the equation above that $\Delta_{\varphi}^{\emph{i}t} \pi_{\varphi}(r_I \circ r_J^*) \Delta_{\varphi}^{-\emph{i}t} = \Big(\frac{\omega_I}{\omega_{J}}\Big)^{\emph{i}t} \pi_{\varphi}(r_I \circ r_J^*)$. Thus, $\sigma_t^{\varphi}(r_I \circ r_J^*) = \Big(\frac{\omega_I}{\omega_{J}}\Big)^{\emph{i}t} r_I \circ r_J^*$. 
		\end{itemize}
	\end{proof}
In the next proposition  we show that 
$H^{\infty}$, as a von Neumann algebra, is generated by $\{r_i : i \in \Theta\}$.
	\begin{proposition}{\label{generator}}
		$H^{\infty}$, as a von Neumann algebra, is generated by $\{r_i : i \in \Theta\}$.
	\end{proposition}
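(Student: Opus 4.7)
The plan is to combine Tomita--Takesaki theory with Takesaki's theorem on the existence of conditional expectations. Let $\mathcal{M}$ denote the von Neumann subalgebra of $H^{\infty}$ (with its Choi--Effros product) generated by $\{r_i : i \in \Theta\}$; the goal is to show $\mathcal{M} = H^{\infty}$. Since $\mathcal{M}$ is closed under taking adjoints, it contains each $r_i^*$, and by Proposition \ref{multiplications}(i) iterated Choi--Effros products of the $r_i$'s coincide with ordinary operator products, so $r_I \in \mathcal{M}$ for every $I \in \Lambda^*$; consequently $r_I \circ r_J^* \in \mathcal{M}$ for all $I, J \in \Lambda$.

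Taking $J$ to be the empty tuple in Proposition \ref{delta}(iii) gives $\sigma_t^{\varphi}(r_i) = \omega_i^{\emph{i}t}\, r_i$, so each generator of $\mathcal{M}$ is an eigenvector of the modular automorphism group. Hence $\mathcal{M}$ is globally $\sigma_t^{\varphi}$-invariant, and Takesaki's theorem produces a unique normal $\varphi$-preserving conditional expectation $E : H^{\infty} \to \mathcal{M}$. Since $E$ is an idempotent with range $\mathcal{M}$, it is enough to prove that $E$ is injective.

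Suppose $x \in H^{\infty}$ satisfies $E(x) = 0$. Passing to the GNS representation, this is equivalent to $\pi_{\varphi}(x)\Omega_{\varphi}$ being orthogonal to $\pi_{\varphi}(\mathcal{M})\Omega_{\varphi}$, and in particular to each $\pi_{\varphi}(r_I \circ r_J^*)\Omega_{\varphi}$ for $I, J \in \Lambda$. Unwinding the GNS inner product, this orthogonality becomes $\varphi\bigl((r_J \circ r_I^*)\circ x\bigr) = 0$. Applying Corollary \ref{phi}(iv) together with the identity $r_K \Omega = e_{K^{op}}$ reduces this to $\omega_J\,\langle x\, e_{J^{op}},\, e_{I^{op}}\rangle = 0$. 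Since $\omega_J > 0$ and the reversal map $K \mapsto K^{op}$ is a bijection of $\Lambda$, every matrix entry of $x$ with respect to the orthonormal basis $\mathcal{B}$ of $\mathcal{F}(\mathcal{H})$ vanishes, whence $x = 0$. The only nontrivial technical point is the $\sigma_t^{\varphi}$-invariance of $\mathcal{M}$, which is what licenses the use of Takesaki's theorem; once that is in hand, the inner-product computation that drives the injectivity of $E$ is an immediate consequence of the Choi--Effros product identities collected in Section 3.
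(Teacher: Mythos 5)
Your proof is correct and follows essentially the same route as the paper's: both take the von Neumann subalgebra generated by $\{r_i : i \in \Theta\}$, use Proposition \ref{delta}(iii) to get invariance under the modular automorphism group, invoke Takesaki's theorem to obtain a $\varphi$-compatible conditional expectation $E$, and then kill all matrix coefficients via Corollary \ref{phi}. Your reformulation in terms of injectivity of $E$ (equivalently, $\pi_{\varphi}(x)\Omega_{\varphi} \perp \pi_{\varphi}(\mathcal{M})\Omega_{\varphi}$ forces $x = 0$) is just the paper's computation of $\varphi\bigl((x - E(x)) \circ r_I \circ r_J^*\bigr) = 0$ applied to an element of $\ker E$, so the two arguments coincide in substance.
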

	\begin{proof}
		Let $\mathcal{N}$ denote the von Neumann subalgebra of $H^{\infty}$ generated by $\{r_i: i \in \Theta\}$, that is, $\mathcal{N}$ is the $\sigma$-weak closure of the (unital self-adjoint) subalgebra of $H^{\infty}$ consisting of polynomials in $r_i, r_i^*$ for all $i \in \Theta$. An appeal to Proposition \ref{delta}(iii) together with normality of $\sigma_t^{\varphi}$ yields that $\sigma_t^{\varphi}(\mathcal{N})\subseteq \mathcal{N}$ for all $t \in \mathbb{R}$. 
		Hence, by \cite[Proposition 2.6.6]{Sun}, there exists a $\varphi$-compatible conditional expectation $E$ of $H^{\infty}$ onto $\mathcal{N}$. To prove the proposition, it suffices to show that $E(x)=x$ for all $x \in H^{\infty}$. Given $x \in H^{\infty}$, to show that $x - E(x) = 0$, it suffices to verify that $\big\langle \big(x - E(x)\big) e_I, e_J \big\rangle = 0$ for all $I, J \in \Lambda$, or, equivalently, $\big\langle \big(x - E(x)\big) r_I \Omega, r_J \Omega \big\rangle = 0$ for all $I, J \in \Lambda$.
		Note that 
		\begin{align*}
			\omega_J \big\langle \big(x - E(x)\big) r_I \Omega, r_J \Omega \big\rangle 
			&= \varphi \Big(\big(x - E(x)\big) \circ r_I \circ r_J^* \Big) \  \ (\mbox{by Corollary \ref{phi}(iii)})\\
			&=\varphi (x \circ r_I \circ r_J^*) - \varphi \big(E(x) \circ r_I \circ r_J^* \big)\\
			&= \varphi (x \circ r_I \circ r_J^*) - \varphi \big(E(x \circ r_I \circ r_J^*)\big) \  \ (\mbox{as $E$ is $\mathcal{N}$-$\mathcal{N}$-bilinear})\\
			&= \varphi (x \circ r_I \circ r_J^*) - \varphi (x \circ r_I \circ r_J^*) \  \ (\mbox{by $\varphi$-compatibility of $E$})\\
			&= 0.
		\end{align*}
		This completes the proof.
	\end{proof}

	As an immediate consequence of the preceding proposition we obtain that:
	\begin{coro}\label{total}
		The set $\mathcal{S} := \{ \pi_{\varphi}(r_I \circ r_J^*)\Omega_{\varphi} : I, J \in \Lambda\}$ is total in $\mathcal{H}_{\varphi}$.
	\end{coro}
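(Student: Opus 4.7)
The plan is to combine Proposition \ref{generator} with cyclicity of $\Omega_{\varphi}$ for $\pi_{\varphi}(H^{\infty})$. Since $\pi_{\varphi}$ is normal and the evaluation map $T \mapsto T\Omega_{\varphi}$ is continuous from the $\sigma$-weak topology on $B(\mathcal{H}_{\varphi})$ to the weak topology on $\mathcal{H}_{\varphi}$, it suffices to prove that
\[
\mathcal{L} := \mathrm{span}\{r_I \circ r_J^* : I, J \in \Lambda\}
\]
is $\sigma$-weakly dense in $H^{\infty}$. Cyclicity then pushes this density down to $\mathcal{H}_{\varphi}$ to give totality of $\mathcal{S}$.

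Next I would invoke Proposition \ref{generator}: the $*$-subalgebra $\mathcal{A} \subseteq H^{\infty}$ generated (under the Choi--Effros product) by $\{r_i : i \in \Theta\}$ is $\sigma$-weakly dense in $H^{\infty}$. So the remaining task reduces to the purely algebraic inclusion $\mathcal{A} \subseteq \mathcal{L}$. Observing that $\mathcal{L}$ contains every $r_i = r_i \circ r_{()}^{*}$ (using the convention $r_{()} = 1$) and is stable under adjoint since $(r_I \circ r_J^*)^* = r_J \circ r_I^*$, this inclusion will follow from showing that $\mathcal{L}$ is closed under the Choi--Effros product.

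The central computation is this multiplicative closure. Given $(r_I \circ r_J^*) \circ (r_K \circ r_L^*)$, associativity of the Choi--Effros product rewrites it as $r_I \circ (r_J^* \circ r_K) \circ r_L^*$. By Proposition \ref{multiplications}(ii), $r_J^* \circ r_K = r_J^* r_K$, and iterated application of the scalar relation $r_i^* r_j = \delta_{i,j}$ from Lemma \ref{form1}(ii) telescopes this ordinary operator product to one of three shapes: zero, a pure creation $r_M$, or a pure annihilation $r_N^*$, for sequences $M$ or $N$ read off from $J$ and $K$. In each nontrivial case, Proposition \ref{multiplications}(i) and (ii) absorb the flanking $r_I$ on the left and $r_L^*$ on the right into the middle factor, producing a single element $r_P \circ r_Q^*$ of $\mathcal{L}$.

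The main obstacle is purely combinatorial: bookkeeping the telescoping of $r_J^* r_K$ to normal form and verifying the left/right absorption in each of the three cases. Once this is done, $\mathcal{A} \subseteq \mathcal{L}$ follows, and the corollary is an immediate consequence of cyclicity of $\Omega_\varphi$ and normality of $\pi_\varphi$.
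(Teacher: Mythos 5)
Your proposal is correct and follows essentially the same route as the paper: Proposition \ref{generator} gives that $\operatorname{span}\{\pi_{\varphi}(r_I\circ r_J^*)\}$ is dense in $\pi_{\varphi}(H^{\infty})$, and cyclicity of $\Omega_{\varphi}$ then yields totality of $\mathcal{S}$. You simply make explicit the step the paper leaves implicit, namely that the words $r_I\circ r_J^*$ span a $\circ$-stable self-adjoint subspace containing the generators, which is a correct (and worthwhile) elaboration rather than a different argument.
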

	\begin{proof}
		It follows from Proposition \ref{generator} that $\text{span} \{ \pi_{\varphi}(r_I \circ r_J^*) : I, J \in \Lambda\}$
		is strongly dense in $\pi_{\varphi}(H^{\infty})$. This observation together with the fact that $\Omega_{\varphi}$ is cyclic for $\pi_{\varphi}(H^{\infty})$ yields the desired result.	
	\end{proof}
	We conclude this section with the following result which computes the spectrum of $\Delta_{\varphi}$.
	\begin{lemma}\label{spec}
		The spectrum of $\Delta_{\varphi}$ equals the closure of $\{\frac{\omega_{I}}{\omega_{J}}: I, J \in \Lambda\}$.	
	\end{lemma}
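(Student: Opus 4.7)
The plan is to show both inclusions separately, with the crucial input being that $\Delta_{\varphi}$ admits a total set of eigenvectors coming from Corollary \ref{total} and Proposition \ref{delta}(i).

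First I would handle the easy inclusion. By Proposition \ref{delta}(i), for every pair $I, J \in \Lambda$, the vector $\pi_{\varphi}(r_I \circ r_J^*) \Omega_{\varphi}$ is an eigenvector of $\Delta_{\varphi}$ with eigenvalue $\frac{\omega_I}{\omega_J}$. Assuming (as is the case here: $r_I \circ r_J^* \neq 0$ whenever $I = J = ()$, and a short calculation using Proposition \ref{multiplications} shows it is nonzero in general; in any event this can be checked by taking $I = J$ first), this eigenvector is nonzero, so each $\frac{\omega_I}{\omega_J}$ lies in the point spectrum and hence in $\mathrm{Sp}(\Delta_{\varphi})$. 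Because the spectrum is closed in $[0,\infty)$, we conclude $\overline{\{\omega_I / \omega_J : I, J \in \Lambda\}} \subseteq \mathrm{Sp}(\Delta_{\varphi})$.

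For the reverse inclusion I would argue that $\Delta_{\varphi}$ is diagonalizable. Enumerate the distinct values of the set $\Sigma := \{\omega_I / \omega_J : I, J \in \Lambda\}$ as $\{\lambda_\alpha\}_{\alpha \in A}$, and let $E_{\lambda_\alpha}$ denote the closed linear span of those vectors $\pi_{\varphi}(r_I \circ r_J^*) \Omega_{\varphi}$ with $\omega_I / \omega_J = \lambda_\alpha$. Each $E_{\lambda_\alpha}$ lies in the $\lambda_\alpha$-eigenspace of $\Delta_{\varphi}$, and since $\Delta_{\varphi}$ is self-adjoint, eigenvectors associated with distinct eigenvalues are orthogonal, so the subspaces $\{E_{\lambda_\alpha}\}_{\alpha \in A}$ are mutually orthogonal. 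Corollary \ref{total} tells us that the union of generating vectors is total in $\mathcal{H}_{\varphi}$, hence $\mathcal{H}_{\varphi} = \bigoplus_{\alpha \in A} E_{\lambda_\alpha}$.

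From this orthogonal eigenspace decomposition, $\Delta_{\varphi}$ is unitarily equivalent to a multiplication operator on $\bigoplus_{\alpha \in A} E_{\lambda_\alpha}$ that acts as $\lambda_\alpha$ on $E_{\lambda_\alpha}$, so its spectrum equals the closure of the set of eigenvalues $\{\lambda_\alpha\}_{\alpha \in A} = \Sigma$. Combining with the first inclusion yields $\mathrm{Sp}(\Delta_{\varphi}) = \overline{\Sigma}$, as claimed. The main subtlety to watch out for is that $\Delta_{\varphi}$ may be unbounded, so I would phrase the last step via the spectral theorem for (possibly unbounded) positive self-adjoint operators with a complete orthogonal family of eigenvectors rather than via bounded functional calculus; this is the only technical point, but it is standard.
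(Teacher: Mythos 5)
Your proof is correct and is essentially the paper's argument written out in full: the paper's proof of Lemma \ref{spec} consists of the single sentence that the result follows from Proposition \ref{delta}(i) and Corollary \ref{total}, i.e.\ precisely from the fact that the vectors $\pi_{\varphi}(r_I \circ r_J^*)\Omega_{\varphi}$ form a total family of eigenvectors of $\Delta_{\varphi}$ with eigenvalues $\omega_I/\omega_J$, so that the spectrum of the (possibly unbounded) positive self-adjoint operator $\Delta_{\varphi}$ is the closure of its eigenvalue set. The only detail worth tightening is your nonvanishing remark: the cleanest check is $\|\pi_{\varphi}(r_I\circ r_J^*)\Omega_{\varphi}\|^2=\varphi(r_J\circ r_I^*\circ r_I\circ r_J^*)=\varphi(r_J\circ r_J^*)=\omega_J>0$, using Corollary \ref{phi}(iii).
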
 
	\begin{proof}
		The proof of the lemma follows from Proposition \ref{delta}(i) and Corollary \ref{total}.
	\end{proof}

\begin{remark}\label{Cunz-1}
Let $\mathcal{O}_{\dim \mathcal{H}}$ denote the Cuntz algebra generated by $\{r_i : i \in \Theta\}$. Then from Proposition \ref{generator}, we can realize the Poisson boundary  $H^\infty$ as  $\mathcal{O}_{\dim \mathcal{H}}^{\prime \prime} $. As a consequence of the this fact, we  observe  the following;  
\begin{enumerate}
	\item Since $\mathcal{O}_{\dim \mathcal{H}}$ is nuclear, so $H^\infty$  is injective. \\
	\item 
	In the case when ${\dim \mathcal{H}} < \infty$, the GNS  representation of $\mathcal{O}_{\dim \mathcal{H}}$ with respect to a KMS state has been studied in \cite{Iz4} and also, the type of $\mathcal{O}_{\dim \mathcal{H}}^{\prime \prime}$ has been determined. Using those results we may obtain some identical  results regarding the type classification of   $H^\infty$ and we address  this issue in Section 6 in details.

\end{enumerate}

\end{remark}

		\section{A Diffuse Masa in $H^{\infty}$}
	Consider the diagonal subalgebra $\mathcal{D}$ of $B\big(\mathcal{F}(\mathcal{H})\big)$, that is, $\mathcal{D}$ consists of all those elements $x \in B\big(\mathcal{F}(\mathcal{H})\big)$ such that $x$ is a diagonal operator with respect to the orthonormal basis $\mathcal{B}$ of $\mathcal{F}\big(\mathcal{H}\big)$, that is, $\varphi(r_I^* \circ x \circ r_J) = 0$ for all $I, J \in \Lambda$ with $I \neq J$. Needless to say, $\mathcal{D}$ is a \textit{masa} (maximal abelian subalgebra) in $B\big(\mathcal{F}(\mathcal{H})\big)$. 
	Let $\mathcal{D}_{\omega} := \mathcal{D} \cap H^{\infty}$. It is evident that $\mathcal{D}_{\omega}$ is an abelian von Neumann subalgebra of $H^{\infty}$. This section is devoted to showing that $\mathcal{D}_{\omega}$ is diffuse and 
	is a \textit{masa} in $H^{\infty}$. 
	
	Recall that an abelian von Neumann subalgebra $A$ of a von Neumann algebra $M$ is a masa in $M$ if and only if $A^{\prime} \cap M = A$. A subalgebra $A$ of a von Neumann algebra $M$ is said to be diffuse if it has no minimal projection.
	
	We first show that:
	\begin{proposition}\label{diaggen}
		$\mathcal{D}_{\omega}$, as a von Neumann algebra, is generated by $\{r_I \circ r_I^* : I \in \Lambda\}$.
	\end{proposition}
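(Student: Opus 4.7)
My plan is to establish the two containments $\mathcal{A} \subseteq \mathcal{D}_\omega$ and $\mathcal{D}_\omega \subseteq \mathcal{A}$, where $\mathcal{A}$ denotes the von Neumann subalgebra of $H^\infty$ generated by the projections $q_I := r_I \circ r_I^*$ for $I \in \Lambda$.

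The inclusion $\mathcal{A} \subseteq \mathcal{D}_\omega$ is straightforward. First I would observe that $P_\omega$ preserves the diagonal subalgebra $\mathcal{D}$: for $x \in \mathcal{D}$ and $J \neq K$, the identity $\langle P_\omega(x) e_J, e_K \rangle = \sum_i \omega_i \langle x e_{iJ}, e_{iK}\rangle$ vanishes. Since $r_I r_I^* \in \mathcal{D}$, Lemma \ref{formula} gives $q_I = \text{SOT-}\lim_n P_\omega^n(r_I r_I^*)$, and SOT-closedness of $\mathcal{D}$ yields $q_I \in \mathcal{D} \cap H^\infty = \mathcal{D}_\omega$.

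For the reverse inclusion, I would first derive, via the recursion $(P_\omega^n(y))_J = \sum_{\lvert K \rvert = n} \omega_K\, y_{KJ}$ valid for diagonal $y$, the explicit diagonal formula
\[
(q_I)_J = \begin{cases} 1 & \text{if } \lvert J \rvert \geq \lvert I \rvert \text{ and } J \text{ ends with } I, \\ \omega_{I_{\lvert I \rvert - \lvert J \rvert}} & \text{if } \lvert J \rvert < \lvert I \rvert \text{ and } J \text{ equals the suffix of } I \text{ of length } \lvert J \rvert,\\ 0 & \text{otherwise.} \end{cases}
\]
Given $x \in \mathcal{D}_\omega$ with diagonal entries $x_J := \langle x e_J, e_J\rangle$, the harmonicity $P_\omega(x) = x$ iterates to $x_J = \sum_{\lvert K \rvert = t} \omega_K\, x_{KJ}$ for every $t \geq 1$.

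I would then introduce the approximants $x^{(k)} := \sum_{\lvert I \rvert = k} x_I\, q_I$, interpreted as the SOT-limit of partial sums over finite subsets of $\Theta^k$. The explicit formula for $(q_I)_J$ and the bound $\lvert x_I \rvert \leq \|x\|$ show that these partial sums act on each basis vector $e_J$ with uniformly bounded coefficients, so the sum converges SOT to a diagonal operator $x^{(k)}$ of norm at most $\|x\|$; since $\mathcal{A}$ is $\sigma$-weakly closed, $x^{(k)} \in \mathcal{A}$. A direct computation gives $x^{(k)} e_J = x_{\text{suffix}(J, k)}\, e_J$ for $\lvert J \rvert \geq k$, while for $\lvert J \rvert < k$ the iterated harmonicity identity collapses the sum to $x^{(k)} e_J = x_J\, e_J = x e_J$. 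Hence $x^{(k)} e_J = x e_J$ whenever $k > \lvert J \rvert$, and uniform boundedness yields $x^{(k)} \to x$ in SOT, so $x \in \mathcal{A}$.

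The principal obstacle is the accurate derivation of the diagonal entries of $q_I$, in particular the bookkeeping of the correction contributions on short words $J$ (those with $\lvert J \rvert < \lvert I \rvert$) arising from the Choi--Effros product. Once the formula for $(q_I)_J$ is in hand, the harmonicity recursion drives a clean martingale-style collapse of the approximants $x^{(k)}$ onto $x$.
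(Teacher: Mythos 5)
Your strategy is correct, but it is genuinely different from the paper's. For the inclusion $\{r_I\circ r_I^*\}''\subseteq\mathcal{D}_\omega$ the paper computes $\varphi(r_L^*\circ r_I\circ r_I^*\circ r_K)=\frac{\omega_K}{\omega_L}\varphi(r_I^*\circ r_K\circ r_L^*\circ r_I)=0$ for $K\neq L$ using Corollary \ref{phi}, whereas you use $P_\omega$-invariance of $\mathcal{D}$ together with $r_I\circ r_I^*=\text{SOT-}\lim_nP_\omega^n(r_Ir_I^*)$; both are fine. For the reverse (and harder) inclusion the paper follows the template of Proposition \ref{generator}: since $\sigma_t^\varphi$ fixes each $r_I\circ r_I^*$ (Proposition \ref{delta}(iii)), Takesaki's theorem provides a $\varphi$-compatible conditional expectation $E$ onto the generated subalgebra, and one checks $E(x)=x$ for $x\in\mathcal{D}_\omega$ by testing the diagonal entries $\varphi\big((x-E(x))\circ r_{I^{op}}\circ r_{I^{op}}^*\big)$ via the bimodule property. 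You instead build explicit approximants $x^{(k)}$ from the generators and let the iterated harmonicity identity $x_J=\sum_{|K|=t}\omega_K\,x_{KJ}$ collapse them onto $x$; this avoids modular theory entirely and in fact proves the stronger, more quantitative statement that every element of $\mathcal{D}_\omega$ is a bounded SOT-limit of finite $\omega$-weighted combinations of the projections $r_I\circ r_I^*$ (a fact of independent use -- compare the net invoked in Proposition \ref{phifactor}). Two points to tighten: (i) your explicit formula for $\langle(r_I\circ r_I^*)e_J,e_J\rangle$ has the roles of $I$ and $I^{op}$ interchanged (with the paper's convention $r_I\Omega=e_{I^{op}}$, the entry is $1$ when $J$ ends in $I^{op}$ and equals $\omega_{(I^{op})_{|I|-|J|}}$ when $J=(I_{|J|})^{op}$, so the collapsing approximant is $\sum_{|I|=k}x_{I^{op}}\,r_I\circ r_I^*$); this is harmless since the generating set is invariant under $I\mapsto I^{op}$, but it is exactly the bookkeeping you flag and must be carried out. (ii) When concluding $x^{(k)}\to x$ forces $x$ into the von Neumann subalgebra generated by the $r_I\circ r_I^*$ \emph{inside the abstract von Neumann algebra} $H^\infty$, note that bounded SOT convergence in $B(\mathcal{F}(\mathcal{H}))$ gives $\sigma$-weak convergence, and the $\sigma$-weak topology of the $W^*$-algebra $H^\infty$ coincides with the relative $\sigma$-weak topology of $B(\mathcal{F}(\mathcal{H}))$ by uniqueness of the predual; with that remark the argument is complete.
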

	\begin{proof}
		Given $I \in \Lambda$, a little thought should convince the reader that for any $K, L\in \Lambda$ with $K \neq L$, 
		$$\varphi(r_L^* \circ r_I \circ r_I^* \circ r_K) = \frac{\omega_K}{\omega_L} \varphi(r_I^* \circ r_K \circ r_L^* \circ r_I) = 0,$$
		showing that $r_I \circ r_I^* \in \mathcal{D}_\omega$.
		The proof for the remaining part of the proposition is similar to that of Proposition \ref{generator} and is left to the reader.	  
	\end{proof}

	Next we prove that:
	\begin{proposition}\label{diffuse}
		$\mathcal{D}_\omega$ is diffuse in $H^{\infty}$. 
	\end{proposition}
	\begin{proof}
		If possible let there be a non-zero minimal projection $q$ in $\mathcal{D}_{\omega}$. Note that as $q$ is a positive element of $B\big(\mathcal{F}(\mathcal{H})\big)$, $\varphi(r_I^* \circ q \circ r_I) \geq 0$ for all $I \in \Lambda$ and as $q$ is non-zero, we must have that $\varphi(q) > 0$. If for every positive integer $n$, there is exactly one sequence $I_n \in \Theta^n$ (of length $n$) such that $\varphi(r_{I_n}^* \circ q \circ r_{I_n}) > 0$, then one can easily see that $$\varphi(r_{I_n}^* \circ q \circ r_{I_n}) = \frac{1}{\omega_{I_n}} \varphi(q)$$ which is an impossibility as the sequence $\{ \frac{1}{\omega_{I_n}}\}$ diverges to infinity. Let $m$ be the smallest positive integer such that there are at least two distinct sequences $I, J$ in $\Theta^m$ with $\varphi(r_I^* \circ q \circ r_I) > 0$ and $\varphi(r_J^* \circ q \circ r_J) > 0$. Clearly, $q \circ r_I \circ r_I^* = \alpha q$ for some $\alpha \in \mathbb{C}$ and hence, $r_I^* \circ q \circ r_I = r_I^* \circ (q \circ r_I \circ r_I^*) \circ r_I = r_I^* \circ (\alpha q) \circ r_I = \alpha (r_I^* \circ q \circ r_I)$ and since $r_I^* \circ q \circ r_I$ is non-zero element, it follows that $\alpha = 1$. On the other hand, it follows from $q \circ r_I \circ r_I^* = \alpha q$ that
		$$\alpha (r_J^* \circ q \circ r_J) = r_J^* \circ q \circ r_I \circ r_I^* \circ r_J = 0,$$ and as $r_J^* \circ q \circ r_J$ is non-zero element, it follows that $\alpha = 0$  and thus, we arrive at a contradiction. This completes the proof.  
	\end{proof}
	We are now ready to prove the main result of this section.
	\begin{theorem}\label{masa}
		$\mathcal{D}_{\omega}$ is a diffuse masa in $H^{\infty}$.
	\end{theorem}
	\begin{proof}
		We have already proved in Proposition \ref{diffuse} that $\mathcal{D}_{\omega}$ is diffuse in $H^\infty$.
		Thus, in order to complete the proof, it just remains to prove that $\mathcal{D}_{\omega}$ is a masa in $H^{\infty}$. 
		Let $x \in (\mathcal{D}_{\omega})^{\prime} \cap H^{\infty}$. We need to show that $x \in \mathcal{D}_{\omega}$, or, equivalently, $\varphi(r_I^* \circ x \circ r_J) = 0$ for all $I, J \in \Lambda$ with $I \neq J$. 
		Observe that by virtue of Equation \eqref{pform1} it suffices to see that $\varphi(r_I^* \circ x \circ r_J) = 0$ for all $I, J \in \Lambda^*$ with $I \neq J$. 
		Consider $I, J \in \Lambda^*$ with $I \neq J$.
		\vspace{2mm} \\ 
		\textbf{Case 1.} Assume first that neither $I$ is of the form $JK$ nor $J$ is of the form $IK$ for some $K \in \Lambda$ and so, in this case, $r_I^* \circ r_J = 0$. As $x$ commutes with every element of $\mathcal{D}_{\omega}$, it follows that $$x \circ (r_I \circ r_I^* - r_J \circ r_J^*) = (r_I \circ r_I^* - r_J \circ r_J^*) \circ x,$$
		and consequently,
		$$r_I^* \circ x \circ (r_I \circ r_I^* - r_J \circ r_J^*) \circ r_J = r_I^* \circ  (r_I \circ r_I^* - r_J \circ r_J^*) \circ x \circ r_J.$$ 
		As $r_I^* \circ r_J = 0$, one can easily see that the expression on the left of the equality sign in the preceding equation equals $- r_I^* \circ x \circ r_J$ whereas the expression on the right of the equality sign in the preceding equation equals $r_I^* \circ x \circ r_J$ and thus, we have that $r_I^* \circ x \circ r_J = 0$ and therefore, $\varphi(r_I^* \circ x \circ r_J) = 0$.
		\vspace{2mm} \\ 
		\textbf{Case 2.} Now assume that either $I$ is of the form $JK$ or $J$ is of the form $IK$ for some $K \in \Lambda^*$. Without loss of generality we may assume that $J = IK$ for some $K \in \Lambda^*$. As $x \circ r_J \circ r_J^* = r_J \circ r_J^* \circ x$, we have that $r_I^* \circ (x \circ r_J \circ r_J^*) \circ r_J = r_I^* \circ (r_J \circ r_J^* \circ x) \circ r_J$ from which it follows that $r_I^* \circ x \circ r_J = r_I^* \circ r_J \circ r_J^* \circ x \circ r_J = r_K \circ r_J^* \circ x \circ r_J$ and consequently,
		$$\varphi(r_I^* \circ x \circ r_J) = \varphi(r_K \circ r_J^* \circ x \circ r_J) = \omega_K \varphi(r_J^* \circ x \circ r_J \circ r_K) = \omega_K \varphi(r_J^* \circ x \circ r_{JK}),$$
		that is,
		$$\varphi(r_I^* \circ x \circ r_{IK}) = \omega_K \varphi(r_{IK}^* \circ x \circ r_{IKK}).$$
		Repeated application of this shows that 
		$$\varphi(r_I^* \circ x \circ r_{IK}) = \omega_K \varphi(r_{IK}^* \circ x \circ r_{IKK}) = \omega_K^2 \varphi(r_{IKK}^* \circ x \circ r_{IKKK}) = \ldots = \omega_K^n \varphi(r_{IK^n}^* \circ x \circ r_{IK^{n+1}})$$ for any $n \geq 1$ and hence, $\varphi(r_{IK^n}^* \circ x \circ r_{IK^{n+1}}) = \omega_K^{-n} \varphi(r_I^* \circ x \circ r_{IK})$. Since $0 < \omega_K < 1$, $\omega_K^{-n} \uparrow \infty$ as $n \to \infty$, and this forces that $\varphi(r_I^* \circ x \circ r_{IK}) = 0$. This completes the proof.
	\end{proof}

	\section{Centraliser and type decomposition}
		In this section we discuss the centralizer of $H^{\infty}$ and its factoriality. The main result of this section is Theorem \ref{type III} which shows that $H^{\infty}$ is a type $III$ factor for any choice of the sequence $\omega$. If
		$\mathcal{H}$ is finite-dimensional, then we completely classify  $H^{\infty}$ in terms of its Connes' $S$ invariant.

		Recall that (see \cite[Section 10.27]{StZs}, \cite[Definition 2.5.13]{Sun}) the centralizer of a faithful normal state  $\theta$ on $H^{\infty}$, denoted $H^{\infty}_{\theta}$, is the von Neumann subalgebra of $H^{\infty}$ defined by
		\begin{align*}
			H^{\infty}_{\theta} = \{x \in H^{\infty}: \sigma_t^{\theta}(x) = x, \  \text{for all} \  t \in \mathbb{R}\}.
		\end{align*}
		It is a fact (see \cite[Section 10.27]{StZs}, \cite[Corollary 2.5.14] {Sun}) that 
		\begin{align*}
			H^{\infty}_{\theta} = \{x \in H^{\infty} : \theta(x \circ y) = \theta(y \circ x) \ \mbox{for all} \ y \in H^{\infty}\}.
		\end{align*}	
		Also, let the center of the centralizer be denoted by $\mathcal{Z}(H^{\infty}_{\theta})$.	\\\\
		Recall that (Proposition \ref{state}) the vacuum state  $\varphi(\cdot)=\langle (\cdot)\Omega, \Omega \rangle$ on  $H^{\infty}$, is a faithful normal state on $H^{\infty}$. It is immediate from Proposition \ref{delta}(iii) that $r_I \circ r_I^* \in H^\infty_\varphi$ for all $I \in \Lambda$ and hence, it follows by an appeal to Proposition \ref{diaggen} that $\mathcal{D}_\omega \subseteq H^\infty_\varphi$ and consequently, $\mathcal{Z}(H^{\infty}_{\varphi}) \subseteq \mathcal{D}_\omega$. 

		Let us briefly recall Connes' classification of type $III$ factors. Connes in \cite{Connes-C} defined the $S$ invariant of a factor $M$, denoted $S(M)$, to be the intersection
		over all faithful normal and semifinite weights $\theta$ of the spectra of the modular operators $\Delta_\theta$, that is, 
		$$S(M)   = \cap\{ \text{ spectrum of } \Delta_\theta:~~ \theta \text{ is a faithful normal semifinite weight on } M \}.$$
		$M$ is a type $III$ factor if an only if $0 \in S(M)$; in that case, Connes' $III_\lambda$ classification, $\lambda \in [0, 1]$, in terms of its $S$ invariant is as follows: 
		\[
		S(M)=
		\begin{cases}
			\{ \lambda^n: ~ n \in \Z\}\cup \{0\},\quad\text{iff } M \text{ is type } {III}_\lambda, \quad \lambda \in (0, 1)\\ 
			[0, \infty ), ~ \text{ iff } M \text{ is type } {III}_1\\
			\{0, 1\} ~ \text{ iff } M \text{ is type } {III}_0.
		\end{cases} 
		\]
		Let $\Gamma(M) = \R^*_+ \cap S(M)$ where $\R^*_+$ is the multiplicative group of positive real numbers.  Then $\Gamma(M)$ is a closed subgroup of $\R^*_+$. It is known that a non-trivial closed subgroup of $\R^*_+$ is cyclic, i.e, of the form $\{ \lambda^n: ~ n \in \Z\}$ for some $ 0< \lambda <1$. Thus, a type $III$ factor $M$ is of type
		\begin{itemize}
			\item[(i)] $III_0$ if $\Gamma(M) = \{1\}$;
			\item[(ii)] $III_\lambda$ if $\Gamma(M) = \{\lambda^n : n \in \mathbb{Z}\}$ (for $\lambda \in (0, 1)$);
			\item[(iii)] $III_1$ if $\Gamma(M) = (0, \infty)$.
		\end{itemize}
	The remaining of this section is dedicated to showing  that $H^\infty$ is a factor of type $III$ for any choice of the sequence $\omega$ and then completely classify $H^\infty$, in the case when $\mathcal{H}$ is finite-dimensional, in terms of its Connes' $S$ invariant. It is well-known that if for some faithful normal state $\theta$ on $H^\infty$, the centralizer $H^\infty_\theta$ turns out to be a factor, then $\Gamma(H^\infty_\theta) = \mathbb{R}^*_+ \cap (\text{ spectrum of } \Delta_\theta)$ (see, for instance, \cite[Proposition 3.4.7]{Sun}). We next show that in the case when $\mathcal{H}$ is finite-dimensional, $H^\infty_\varphi$ is indeed a factor and then appeal to the aforementioned result to classify $H^\infty$ in this case. 

	Assume that $\mathcal{H}$ is finite-dimensional, say, $\dim \mathcal{H} = n > 1$. We proceed to prove that $H^\infty_\varphi$ is a factor. The strategy of the proof is similar to that of \cite[Lemma 4.4]{Iz1993}. But we use the information of the centralizer and thus our proof becomes straightforward.  
	
	Consider the following endomorphism $\alpha$ on $H^\infty$ defined by $$\alpha(x) = \sum_{i = 1}^n r_i \circ x \circ r_i^*,~~ x \in H^\infty.$$ Consider $\mathcal{O}_n$, the Cuntz algebra generated by $\{r_i : 1 \leq i \leq n\}$. Then note that $\alpha$ can be regarded as an endomorphism of $\mathcal{O}_n$.
	
	Now consider the unitary $v = \sum_{i, j = 1}^n r_i \circ r_j \circ r_i^* \circ r_j^*$ and for $k \geq 1$, define $u_k = v \circ \alpha(v) \circ \alpha^2(v) \circ \cdots \circ \alpha^{k-1}(v)$. Observe that $u_k$ is a unitary in $H^\infty_\varphi$ for all $k \in \mathbb{N}$. We recall the following result from \cite[$\S 2$]{DR}.
	\begin{lemma}\cite[$\S 2$]{DR}\label{DR}
	Assume ${\dim \mathcal{H}} < \infty$,  let $I \in \Lambda$ and consider $R = r_I \circ r_I^*$. Then $$\alpha(R) = \lvert \lvert . \rvert\rvert- \lim _{n \to \infty}u_n \circ R \circ u_n^*.$$
	\end{lemma}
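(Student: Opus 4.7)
The plan is to reduce everything to a classical Cuntz-algebra calculation of Doplicher-Roberts type, exploiting the fact, already established in the excerpt, that under the Choi-Effros product the family $\{r_i : i \in \Theta\}$ satisfies the Cuntz relations $r_i^* \circ r_j = \delta_{ij}$ and $\sum_{i=1}^n r_i \circ r_i^* = 1$; iterating the latter yields $\sum_{|J|=m} r_J \circ r_J^* = 1$ for every $m \geq 1$. Throughout I will use $r_I = r_{i_1} \circ \cdots \circ r_{i_k}$ and $r_I^* = r_{i_k}^* \circ \cdots \circ r_{i_1}^*$ for $I = i_1 \cdots i_k$.

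First I would verify the basic flip identity $v \circ r_i \circ r_j = r_j \circ r_i$ by unwinding the definition of $v$ and applying the Cuntz relations twice; this says precisely that $v$ transposes the first two creators in any Choi-Effros monomial $r_{i_1} \circ \cdots \circ r_{i_m}$ with $m \geq 2$. Since $\alpha$ is a unital $*$-endomorphism of the Cuntz algebra (under $\circ$), a short induction then shows that $\alpha^{m-1}(v)$ transposes the $m$-th and $(m+1)$-th factors, i.e.,
\[
\alpha^{m-1}(v) \circ r_{i_1} \circ \cdots \circ r_{i_{m+1}} = r_{i_1} \circ \cdots \circ r_{i_{m-1}} \circ r_{i_{m+1}} \circ r_{i_m}.
\]
Composing these adjacent transpositions in the order dictated by $u_k = v \circ \alpha(v) \circ \cdots \circ \alpha^{k-1}(v)$ shuttles the $(k+1)$-th creator all the way to position $1$, giving the cyclic identity
\[
u_k \circ r_{i_1} \circ \cdots \circ r_{i_{k+1}} = r_{i_{k+1}} \circ r_{i_1} \circ \cdots \circ r_{i_k},
\]
and, taking Choi-Effros adjoints, $u_k^* \circ r_{j_1} \circ \cdots \circ r_{j_{k+1}} = r_{j_2} \circ \cdots \circ r_{j_{k+1}} \circ r_{j_1}$.

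At this point the lemma essentially proves itself via a stabilization argument: I claim that for $R = r_I \circ r_I^*$ with $|I| = k$ and any $n \geq k$, one in fact has the exact equality $u_n \circ R \circ u_n^* = \alpha(R)$. To verify this, multiply both sides on the right by $r_J$ for an arbitrary $J$ with $|J| = n+1$. Using the cyclic identity to rewrite $u_n^* \circ r_J$, and then collapsing the resulting string of creator/annihilator pairs via $r_I^* \circ r_{j_2} \circ \cdots \circ r_{j_{k+1}} = \delta_{I, j_2 \cdots j_{k+1}}$, both $u_n \circ R \circ u_n^* \circ r_J$ and $\alpha(R) \circ r_J = r_{j_1} \circ r_I \circ r_I^* \circ r_{j_2} \circ \cdots \circ r_{j_{n+1}}$ collapse to the same element $\delta_{I,\, j_2 \cdots j_{k+1}}\, r_J$. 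Multiplying by $r_J^*$ on the right and summing over all $J$ with $|J| = n+1$, the identity $\sum_{|J|=n+1} r_J \circ r_J^* = 1$ yields $u_n \circ R \circ u_n^* = \alpha(R)$ on the nose.

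Consequently, the sequence $u_n \circ R \circ u_n^*$ is eventually constant equal to $\alpha(R)$, so the norm limit exists and equals $\alpha(R)$, as required. The only real bookkeeping point in the whole argument is tracking the order reversal in the Choi-Effros adjoint $(a \circ b)^* = b^* \circ a^*$ while setting up the cyclic identity for $u_k^*$; beyond that, the proof is a straightforward unwinding of definitions combined with the Cuntz relations, and I do not anticipate any genuine obstacle.
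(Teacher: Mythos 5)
Your proof is correct. Note first that the paper does not actually prove this lemma: it is imported verbatim from Doplicher--Roberts \cite[$\S 2$]{DR}, so there is no internal argument to compare against. What you have written is a faithful, self-contained reconstruction of the Doplicher--Roberts permutation-unitary computation, and it even yields slightly more than the statement asks for, namely the exact equality $u_n \circ R \circ u_n^* = \alpha(R)$ for every $n \geq \lvert I \rvert$, so that the norm limit is attained by eventual constancy. I checked the intermediate identities: $v \circ r_a \circ r_b = r_b \circ r_a$, the fact that $\alpha^{m-1}(v)$ transposes the $m$-th and $(m+1)$-th creators, the resulting cyclic identity for $u_k$ and its adjoint, and the collapse of both $u_n \circ R \circ u_n^* \circ r_J$ and $\alpha(R)\circ r_J$ to $\delta_{I,\,j_2\cdots j_{k+1}}\, r_J$ for $\lvert J\rvert = n+1$; all are sound. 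The one premise you take as ``already established'' that is not literally written in the paper is the completeness relation $\sum_{i=1}^{\dim\mathcal{H}} r_i \circ r_i^* = 1$, which is genuinely a statement about the Choi--Effros product (for the operator product one has $\sum_i r_i r_i^* = 1 - p_{\Omega} \neq 1$). It does hold, and deserves a line: by Proposition \ref{multiplications}(iv) and Lemma \ref{form1} one gets $r_i \circ r_i^* = r_i r_i^* + \omega_i p_{\Omega}$, and summing over $i$ gives $(1 - p_{\Omega}) + \big(\sum_i \omega_i\big) p_{\Omega} = 1$, using $\sum_i \omega_i = 1$ and $\dim\mathcal{H} < \infty$ (the latter also being why the sum $\sum_{\lvert J\rvert = n+1} r_J \circ r_J^*$ you multiply against is finite, so no topology issues arise). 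With that supplied, your argument is complete.
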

We are now ready to prove that $H^\infty_\varphi$ is a factor
\begin{proposition}\label{phifactor}
	With notations as above, $H^\infty_\varphi$ is a factor.	
\end{proposition}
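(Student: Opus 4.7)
The plan is to show $\mathcal{Z}(H^\infty_\varphi)=\mathbb{C}\cdot 1$ in three steps: confine the centre to $\mathcal{D}_\omega$, use centrality against the ``permutation'' elements $r_I\circ r_J^*$ of $H^\infty_\varphi$ to force the diagonal of any central element to depend only on the multiset of letters of $K$, and then conclude with a discrete Liouville argument.

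First, Proposition \ref{delta}(iii) gives $\sigma_t^\varphi(r_I\circ r_I^*)=r_I\circ r_I^*$ for every $I$, so by Proposition \ref{diaggen} we have $\mathcal{D}_\omega\subseteq H^\infty_\varphi$. Theorem \ref{masa} then implies that $\mathcal{D}_\omega$ is a masa inside $H^\infty_\varphi$ as well, and hence $\mathcal{Z}(H^\infty_\varphi)\subseteq\mathcal{D}_\omega$. Any central $z$ is therefore a diagonal operator $ze_K=\mu_K e_K$, and the condition $P_\omega(z)=z$ translates into the harmonicity relation $\mu_K=\sum_{i\in\Theta}\omega_i\mu_{iK}$ for every $K\in\Lambda$.

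Second, for any $I,J\in\Lambda^*$ with $|I|=|J|$ and $J$ a permutation of $I$ we have $\omega_I=\omega_J$, so $r_I\circ r_J^*\in H^\infty_\varphi$ by Proposition \ref{delta}(iii). The key computation is to evaluate the centrality identity $z\circ(r_I\circ r_J^*)=(r_I\circ r_J^*)\circ z$ on basis vectors. Using Proposition \ref{multiplications}(v)--(vii) one writes each Choi--Effros product as a principal operator term ($zr_Ir_J^*$ or $r_Ir_J^*z$) plus $p_\Omega$-supported rank-one corrections; the harmonicity of $\mu$ makes the principal terms $P_\omega$-invariant after a single iterate, while the corrections are annihilated by further $P_\omega$-iterates because $P_\omega(p_\Omega)=0$. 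Evaluating on $e_{K'J^{op}}$ and reading off the coefficient of $e_{K'I^{op}}$ collapses the identity to $\mu_{K'I^{op}}=\mu_{K'J^{op}}$ for every $K'\in\Lambda$. Taking $K'=\varnothing$ and varying $(I,J)$ over all same-length permutations shows $\mu_L=f(a_1(L),\ldots,a_n(L))$ for a bounded function $f$ on $\mathbb{Z}_{\geq 0}^n$, where $a_i(L)$ counts the occurrences of $i\in\Theta$ in $L$.

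Third, the harmonicity of $\mu$ becomes the discrete Laplace equation $f(a)=\sum_{i=1}^n\omega_i f(a+e_i)$ on $\mathbb{Z}_{\geq 0}^n$. I would interpret $f$ as a bounded harmonic function for the i.i.d.\ random walk on $\mathbb{Z}_{\geq 0}^n$ whose step is $e_i$ with probability $\omega_i$; then $f(X_m)$ is a bounded martingale converging almost surely to a tail-measurable limit, and Kolmogorov's zero--one law applied to the i.i.d.\ sequence of increments forces the limit, hence $f$ and hence $\mu$, to be constant. Therefore $z=\mu_\varnothing\cdot 1$ and $\mathcal{Z}(H^\infty_\varphi)=\mathbb{C}\cdot 1$, so $H^\infty_\varphi$ is a factor. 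The main technical hurdle is the middle step: the $p_\Omega$-corrections of Proposition \ref{multiplications} generally do not vanish (for instance when $I$ and $J$ share a last letter, a rank-one contribution on $\Omega$ persists), and one must track carefully how these corrections match on the two sides of the centrality identity so that harmonicity of $\mu$ is recognised as precisely the input that makes the diagonal equation $\mu_{K'I^{op}}=\mu_{K'J^{op}}$ drop out. The Doplicher--Roberts lemma (Lemma \ref{DR}) is not needed in this centraliser-aware route, which is the simplification over the strategy of \cite{Iz1993} alluded to above.
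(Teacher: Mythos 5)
Your proof is correct in substance but follows a genuinely different route from the paper's. The paper argues via the Cuntz-algebra endomorphism $\alpha(x)=\sum_i r_i\circ x\circ r_i^*$: it approximates a central $x$ by elements of $\operatorname{span}\{r_I\circ r_I^*\}$, invokes the Doplicher--Roberts fact (Lemma \ref{DR}) that $\alpha$ is a pointwise norm-limit of the inner automorphisms $\operatorname{Ad}(u_k)$ with $u_k\in H^\infty_\varphi$, and deduces $\alpha(x)=x$, hence $r_i^*\circ x\circ r_i=x$ and $x\in\mathbb{C}1$. Your route instead localises the centre in $\mathcal{D}_\omega$ via Theorem \ref{masa}, uses the permutation isometries $r_I\circ r_J^*$ (which lie in $H^\infty_\varphi$ exactly because $\omega_I=\omega_J$) to show the diagonal depends only on letter counts, and finishes with Choquet--Deny. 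Your middle step is actually cleaner than you fear: every correction term in Proposition \ref{multiplications}(iv)--(vii) contains a factor $p_\Omega$ applied to a basis vector of length at least one when evaluated on $e_{K'J^{op}}$ (respectively after stripping $J^{op}$ on the right), so all corrections vanish identically at the matrix entry $\langle\,\cdot\,e_{K'J^{op}},e_{K'I^{op}}\rangle$ and the identity collapses to $\mu_{K'I^{op}}=\mu_{K'J^{op}}$ with no bookkeeping. What your approach buys is independence from Lemma \ref{DR} and, in principle, applicability when $\dim\mathcal{H}=\infty$ (the paper only proves factoriality of $H^\infty_\varphi$ in the finite-dimensional case); what it costs is importing a probabilistic input.

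That probabilistic input is the one place you need to be careful: Kolmogorov's zero--one law does not apply as stated. The almost-sure limit $L=\lim_m f(a+\xi_1+\cdots+\xi_m)$ depends on $\xi_1,\dots,\xi_N$ through their sum, so it is not tail-measurable a priori; it \emph{is} invariant under finite permutations of the increments, so the Hewitt--Savage zero--one law gives $L=c_a$ a.s. One must then still check that $c_a$ does not depend on $a$: conditioning on the event $\{\xi_1=e_i\}$ (which has probability $\omega_i>0$) shows $c_a=c_{a+e_i}$ for every $i$, whence $c$ is constant and $f\equiv c$. Alternatively, cite the Choquet--Deny/Blackwell theorem directly, or run the Krein--Milman argument on the compact convex set of $[0,1]$-valued harmonic functions: an extreme point of $\{f:\ f=\sum_i\omega_iT_if\}$ satisfies $T_if=f$ for all $i$ and is therefore constant. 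With that repair the argument is complete.
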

\begin{proof}
	Let $x \in \mathcal{Z}(H^\infty_\varphi)$. It follows, by virtue of Proposition \ref{diaggen}, that there is a net $(x_i)$ in $\text{span}\{r_I \circ r_I^* : I \in \Lambda\}$ such that $\pi_{\varphi}(x_i) \to \pi_{\varphi}(x)$ in SOT. Let $y \in \pi_{\varphi}(H^\infty)'$, then note that \begin{align*}
		\pi_{\varphi}\big( \alpha(x) - x\big) y \Omega_{\varphi} = & \pi_{\varphi}\big( \alpha(x) - \alpha(x_i)\big) y \Omega_{\varphi} + \pi_{\varphi}\big( \alpha(x_i) - u_k \circ x_i \circ u_k^*\big) y\Omega_{\varphi}\\
		& + \pi_{\varphi}( u_k \circ x_i \circ u_k^* - x) y \Omega_{\varphi},
	\end{align*}
and hence,
\begin{align*}
	\norm{\pi_{\varphi}\big( \alpha(x) - x\big) y\Omega_{\varphi}} \leq  & \norm{\pi_{\varphi}\big( \alpha(x) - \alpha(x_i)\big) y\Omega_{\varphi}} + \norm{\pi_{\varphi}\big( \alpha(x_i) - u_k \circ x_i \circ u_k^*\big) y\Omega_{\varphi}}\\
	& + \norm{\pi_{\varphi}( u_k \circ x_i \circ u_k^* - x) y\Omega_{\varphi}}.
\end{align*}
As $\pi_{\varphi}(x_i) \to \pi_{\varphi}(x)$ in SOT, it follows that $\pi_{\varphi}\big(\alpha(x_i)\big) \to \pi_{\varphi}\big(\alpha(x)\big)$ in SOT and hence, $$\norm{\pi_{\varphi}\big( \alpha(x) - \alpha(x_i)\big) y\Omega_{\varphi}} \to 0.$$
Now we obtain the following estimate of the term \\ $\norm{\pi_{\varphi}( u_k \circ x_i \circ u_k^* - x) y\Omega_{\varphi}}$:
\begin{align*}
	\norm{\pi_{\varphi}( u_k \circ x_i \circ u_k^* - x) y\Omega_{\varphi}} &= \norm{ y\pi_{\varphi} \big( u_k \circ (x_i - x)\big) \pi_{\varphi}(u_k^* ) \Omega_{\varphi}}\\
	&= \norm{y\pi_{\varphi}\big( u_k \circ (x_i - x) \big)  J_\varphi \pi_{\varphi}(u_k) J_\varphi\Omega_{\varphi}}\\
	&= \norm{y J_\varphi \pi_{\varphi}(u_k) J_\varphi \pi_{\varphi}\big( u_k \circ (x_i - x) \big) \Omega_{\varphi}}\\
	&\leq \norm{y} \norm{\pi_{\varphi}(x_i - x) \Omega_{\varphi}},
	\end{align*}
and as $\pi_{\varphi}(x_i) \to \pi_{\varphi}(x)$ in SOT, it follows that $\underset{i}{\lim}~ \pi_{\varphi}( u_k \circ x_i \circ u_k^* - x) y\Omega_{\varphi} = 0$.
Thus, given $\epsilon > 0$, there is an index $i_0$ such that $$\norm{\pi_{\varphi}\big( \alpha(x) - \alpha(x_{i_0})\big) y\Omega_{\varphi}} < \frac{\epsilon}{3}, \text{ and } \norm{\pi_{\varphi}( u_k \circ x_{i_0} \circ u_k^* - x) y\Omega_{\varphi}} < \frac{\epsilon}{3}.$$
Finally, an appeal to Lemma \ref{DR} immediately yields that $$\norm{\pi_{\varphi}\big( \alpha(x_{i_0}) - u_k \circ x_{i_0} \circ u_k^*\big) y\Omega_{\varphi}} \leq \norm{y} \norm{\alpha(x_{i_0}) - u_k \circ x_{i_0} \circ u_k^*} \to 0 \text{ as } k \to \infty.$$ 
Thus, there is a positive integer $k_0$ such that  
$$\norm{\pi_{\varphi}\big( \alpha(x_{i_0}) - u_{k_0} \circ x_{i_0} \circ u_{k_0}^*\big) y\Omega_{\varphi}} < \frac{\epsilon}{3}.$$
Consequently, 
\begin{align*}
	\norm{\pi_{\varphi}\big( \alpha(x) - x\big) y\Omega_{\varphi}} \leq  & \norm{\pi_{\varphi}\big( \alpha(x) - \alpha(x_{i_0})\big) y\Omega_{\varphi}} + \norm{\pi_{\varphi}\big( \alpha(x_{i_0}) - u_{k_0} \circ x_{i_0} \circ u_{k_0}^*\big) y\Omega_{\varphi}}\\
	& + \norm{\pi_{\varphi}( u_{k_0} \circ x_{i_0} \circ u_{k_0}^* - x) y\Omega_{\varphi}} \\
	& < \frac{\epsilon}{3} + \frac{\epsilon}{3} + \frac{\epsilon}{3} = \epsilon.
\end{align*}
As $\epsilon > 0$ is arbitrary, it follows that 
$$ \pi_{\varphi}\big( \alpha(x) - x\big) y\Omega_{\varphi} = 0.$$
Since the space $ \pi_{\varphi}(H^\infty)'\Omega_{\varphi}$ is dense in $\mathcal{H}_\varphi$, it follows that $\alpha(x) = x$. This implies that $r_i^* \circ x \circ r_i = x$ for all $i = 1, 2, \cdots, n$ and from this one can easily deduce that $x$ is scalar multiple of the identity element. This completes the proof. 
	
\end{proof}

		We are now ready to state and prove the main result of this section.
		\begin{theorem}{\label{type III}}
			With notations as above, $H^{\infty}$ is a factor of type $III$. Further, if $\mathcal{H}$ is finite-dimensional and if $G$ is the closed subgroup of $\R_+^*$  generated by $\{ \omega_1, \omega_2, \cdots, \omega_{\dim \mathcal{H}} \}$,
			then  	 
			\[
			H^{\infty} \ \text{is} \ 
			\begin{cases}
				\text{type} \ {III}_\lambda,\quad \text{ iff }  ~~G = \{ \lambda^n: ~ n \in \Z\}, 0 < \lambda < 1, \text{ and }\\
				\text{type} \ {III}_1, \quad \text{ iff }~~ G = \R_+^*.
			\end{cases} 
			\]
			In particular, if $\mathcal{H}$ is finite-dimensional and $\omega$ is the constant sequence $\frac{1}{\dim \mathcal{H}}$, then $H^\infty$ is a factor of type $III_{\frac{1}{\dim \mathcal{H}}}$.
		\end{theorem}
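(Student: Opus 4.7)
The plan is to pin down Connes's $T$-invariant $T(H^\infty) = \{t \in \mathbb{R}: \sigma_t^\varphi \text{ is inner}\}$ explicitly, and then, in the finite-dimensional case, to upgrade this to a complete classification via the $\Gamma$-invariant using Proposition \ref{phifactor}.

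The first step is to show that
\[
T(H^\infty) = \{t \in \mathbb{R}: \omega_i^{it} = 1 \text{ for every } i \in \Theta\}.
\]
Given a unitary $u \in H^\infty$ implementing $\sigma_t^\varphi$, Proposition \ref{delta}(iii) yields $u \circ r_i \circ u^* = \omega_i^{it}\, r_i$ for each $i$, and hence (combining with its adjoint) $u \circ r_I \circ r_I^* = r_I \circ r_I^* \circ u$ for every $I \in \Lambda$. Since $\{r_I \circ r_I^*\}$ generates $\mathcal{D}_\omega$ (Proposition \ref{diaggen}), which is a masa in $H^\infty$ (Theorem \ref{masa}), $u$ must lie in $\mathcal{D}_\omega$, so $u e_J = \mu_J e_J$ with $|\mu_J|=1$. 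Unfolding $u \circ r_i = \omega_i^{it} r_i \circ u$ on basis vectors via Proposition \ref{multiplications}(iv) then gives the recursion $\mu_{Ji} = \omega_i^{it} \mu_J$, forcing $\mu_I = \omega_I^{it}\mu_\Omega$ for every $I \in \Lambda$. The harmonicity $P_\omega(u) = u$ for the diagonal operator $u$ becomes $\mu_J = \sum_i \omega_i \mu_{iJ} = \bigl(\sum_i \omega_i^{1+it}\bigr) \mu_J$, so $\sum_i \omega_i^{1+it} = 1$; since $|\omega_i^{1+it}| = \omega_i$ and $\sum_i \omega_i = 1$, saturation of the triangle inequality forces $\omega_i^{it} = 1$ for every $i$. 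Conversely, any such $t$ supplies an implementing diagonal unitary.

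Because $\dim \mathcal{H} > 1$ and $\sum_i \omega_i = 1$, at least one $\omega_i \in (0, 1)$, so $T(H^\infty)$ is contained in the discrete cyclic subgroup $(2\pi/|\log \omega_i|)\mathbb{Z}$ and in particular is properly smaller than $\mathbb{R}$. Combined with $H^\infty$ being an infinite factor (Theorem \ref{factor}), Connes's criterion that a factor is semifinite iff $T(M) = \mathbb{R}$ rules out types $I_\infty$ and $II_\infty$, leaving $H^\infty$ to be of type $III$ for every $\omega$.

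For the classification in the finite-dimensional case, Proposition \ref{phifactor} tells us that $H^\infty_\varphi$ is a factor, and so by Connes's theorem $\Gamma(H^\infty) = \mathbb{R}_+^* \cap \mathrm{Sp}(\Delta_\varphi)$. Lemma \ref{spec} identifies $\mathrm{Sp}(\Delta_\varphi)$ as the closure in $\mathbb{R}$ of $\{\omega_I/\omega_J : I, J \in \Lambda\}$, which is precisely the (abstract) subgroup of $\mathbb{R}_+^*$ generated by $\{\omega_1,\dots,\omega_{\dim \mathcal{H}}\}$; since each $\omega_i \in (0, 1)$, powers tend to $0$, so this closure equals $G \cup \{0\}$, and hence $\Gamma(H^\infty) = G$. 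Connes's classification then yields type $III_\lambda$ when $G = \{\lambda^n : n \in \mathbb{Z}\}$ and type $III_1$ when $G = \mathbb{R}_+^*$, and the constant weight $\omega_i = 1/\dim \mathcal{H}$ specializes to $G = \{(\dim \mathcal{H})^k : k \in \mathbb{Z}\}$, giving type $III_{1/\dim \mathcal{H}}$. The main conceptual step throughout is the first: using the masa $\mathcal{D}_\omega$ to force the unitary implementer to be diagonal, thereby collapsing inner-implementability to the single scalar identity $\sum_i \omega_i^{1+it} = 1$.
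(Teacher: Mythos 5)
Your argument is correct and follows essentially the same route as the paper: the implementing unitary for $\sigma_t^{\varphi}$ is forced into the diagonal algebra $\mathcal{D}_\omega$, its diagonal entries are pinned down to $\omega_I^{it}$ times a constant, and harmonicity yields the scalar identity $\sum_i \omega_i^{1+it}=1$, whose failure (the paper checks it at $t=1/\log\omega_k$ via the real part, which is exactly your triangle-inequality saturation) rules out semifiniteness; the finite-dimensional classification via Proposition \ref{phifactor}, \cite[Proposition 3.4.7]{Sun} and the spectrum of $\Delta_\varphi$ is identical. The only cosmetic difference is that you package the first part as a computation of the full $T$-invariant and reach $\mathcal{D}_\omega$ through the masa theorem, whereas the paper argues by contradiction at a single $t_0$ and uses $\mathcal{Z}(H^{\infty}_{\varphi})\subseteq\mathcal{D}_\omega$.
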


		\begin{proof}	
			If possible let $H^{\infty}$ be a semifinite factor. Then it follows from \cite[Theorem 10.29]{StZs} that there exists a group $\{u_t\}_{t \in \mathbb{R}}$ of unitary operators in $H^{\infty}$ such that $\sigma_t^{\varphi}(x)=u_t \circ x \circ u_t^*$ for all $x\in H^{\infty}$ and $t \in \mathbb{R}$. Clearly, $u_t\in  \mathcal{Z}(H^{\infty}_{\varphi})$ for all $t\in \mathbb{R}$.
			For all $I \in \Lambda$ and $t \in \mathbb{R}$, we have $u_t \circ r_I \circ u_t^* = \sigma_t^{\varphi}(r_I) = \omega_{I}^{it} r_I$ and hence, $$r_I^* \circ u_t \circ r_I = r_I^* \circ u_t \circ r_I  \circ u_t^* \circ u_t = \omega_{I}^{it}    r_I^*\circ r_I\circ  u_t  = \omega_{I}^{it} u_t.$$ In particular, for any $j \in \Theta$, $r_j^* \circ u_t \circ r_j = r_j^* u_t r_j = \omega_j^{it} u_t$ for all $t \in \mathbb{R}$. Since, for any $t \in \mathbb{R}$, $u_t = P_{\omega}(u_t) = \sum_{j \in \Theta} \omega_j r_j^* u_t r_j$, the normality of $\varphi$ yields that
			\begin{equation}\label{type}
				\varphi(u_t) = \sum_{j \in \Theta} \omega_j \varphi(r_j^* u_t r_j) = \Big(\sum_{j \in \Theta} \omega_j^{it + 1}\Big) \varphi(u_t
				).
			\end{equation}
			We assert that there is a non-zero $t \in \mathbb{R}$ such that
			$\sum_{j \in \Theta} \omega_j^{it + 1} \neq 1$. To see this fix $k \in \Theta$ and set $t = \frac{1}{\log\omega_k}$. Then clearly $t \neq 0$ as $0 < \omega_k <1$. If $\sum_{j \in \Theta} \omega_j^{it + 1} = 1$, then $\sum_{j \in \Theta} \omega_j^{it + 1} = \sum_{j \in \Theta} \omega_j$ would imply that $\sum_{j \in \Theta} \omega_j(1 - \omega_j^{it}) = 0$. Hence, $$0 = \text{Re} \big(\sum_{j \in \Theta} \omega_j(1 - \omega_j^{it})\big) = \sum_{j \in \Theta} \omega_j \big(1-\cos(t \log \omega_j)\big) = \sum_{j \in \Theta} \omega_j \Big(1-\cos \Big(\frac{\log \omega_j}{\log\omega_k}\Big)\Big).$$ As $1-\cos\big(\frac{\log \omega_j}{\log\omega_k}\big) \geq 0$ for all $j$ and for $j = k$, $1-\cos\big(\frac{\log \omega_k}{\log\omega_k}\big) = 1 - \cos(1) > 0$, it must happen that Re$\big(\sum_{j \in \Theta} \omega_j(1 - \omega_j^{it})\big) > 0$, a contradiction and thus our assetion is established. Choose a non-zero real number $t_0$ such that $\sum_{j \in \Theta} \omega_j^{it_0 + 1} \neq 1$. It then follows from Equation \eqref{type} that $\varphi(u_{t_0}) = 0$. Consequently, for any $I \in \Lambda$, $$\langle u_{t_0} r_I \Omega, r_I \Omega \rangle = \varphi(r_I^* u_{t_0} r_I) = \omega_{I}^{i t_0}\varphi(u_{t_0})=0.$$ 
		As $u_{t_{0}} \in \mathcal{Z}(H^\infty_\varphi) \subset \mathcal{D}_\omega$, we obtain that $u_{t_0} = 0$, leading to a contradiction.
			
			For the later part,	let $\mathcal{H}$ be finite-dimensional, say, $\dim \mathcal{H} = n > 1$.
			 We have already proved in Proposition \ref{phifactor} that $H^\infty_\varphi$ is a factor and hence, an appeal to \cite[Proposition 3.4.7(b)]{Sun} immediately yields that   $S(H^\infty) = \text{  spectrum of } \Delta_\varphi$ which, by virtue of Proposition \ref{delta}(i), equals the closure of $\big\{ \frac{\omega_{I}}{\omega_{J}}: I,J\in \Lambda \big\}$.  The desired result now follows at once from the discussion on Connes' $S$ invariant for a type $III$ factor at the beginning of this section. 
		\end{proof}

		\begin{remark}
		
		In the case when ${\dim \mathcal{H}} < \infty$,	 we recall from \cite{Iz4} the study of the GNS  representation of $\mathcal{O}_{\dim \mathcal{H}}$ with respect to a KMS state $\varphi^{\widetilde{\omega}}$where $ {\widetilde{\omega}} = ( \widetilde{\omega}_1, \widetilde{\omega}_2, \cdots, \widetilde{\omega}_{\dim \mathcal{H}} )$ is a $n$-tuple of positive numbers and $\beta$ is the positive number determined by $\sum_{i=1}^{\dim \mathcal{H}} e^{- \beta  \widetilde{\omega}_i} = 1$. 
		For detailed description of  $\varphi^{\widetilde{\omega}}$, we refer to  \cite{Iz4} and \cite{DEvans}. The corresponding modular automorphism group is given by $ \sigma^{\varphi^{\widetilde{\omega}}}_t(r_j) = e^{-i\beta t {\widetilde{\omega}}_j} r_j$ for $ j = 1, 2, \cdots,  {\dim \mathcal{H}}$. Izumi obtained the following classification results (see \cite[Theorem 4.7]{Iz4}):
		\begin{itemize}
			\item[(i)] If $ \frac{\widetilde{\omega_i}}{\widetilde{\omega_j}} \notin \mathbb{Q}$ for some $i , j$, then $\mathcal{O}_{\dim \mathcal{H}}^{\prime \prime}$ is a type $III_1$ factor.
			\item[(ii)] If $ \frac{\widetilde{\omega_i}}{\widetilde{\omega_j}}\in \mathbb{Q}$ for all $i , j$, then $\mathcal{O}_{\dim \mathcal{H}}^{\prime \prime}$ is a type $III_\lambda $ factor for some $\lambda \in (0, 1)$. \\
		\end{itemize}
		
		On the other hand we proved that (see Proposition \ref{state}) the vacuum state $\varphi(\cdot) = \langle(\cdot) \Omega, \Omega \rangle $ is  faithful and normal on $H^\infty$.  The modular automorphisms  associated to $\varphi$ is given by $ \sigma^{\varphi}_t(r_j) = \omega_j^{it} r_j$ for $t \in \mathbb{R}$ and $ j = 1, 2, \cdots,  {\dim \mathcal{H}}$. Thus, by setting  $ {\widetilde{\omega}}_i = -\frac{1}{\beta} \log(\omega_i)$ for $i = 1, 2, \cdots, \dim \mathcal{H}$, we obtain, by an appeal to the aforementioned theorem of Izumi \cite[Theorem 4.7]{Iz4}, the following classification for $H^\infty$: 
			\begin{itemize}
			\item[(i)] If $ \frac{\log(\omega_i)}{\log(\omega_j)} \notin \mathbb{Q}$ for some $i , j$, then $H^\infty$ is a type $III_1$ factor.
			\item[(ii)] If $\frac{\log(\omega_i)}{\log(\omega_j)} \in \mathbb{Q}$ for all $i , j$, then $H^\infty$ is a type $III_\lambda $ factor for some $\lambda \in (0, 1)$. 
		\end{itemize}
	
	\vskip2mm
		Note that from the perspective of the theory of  non-commutative Poisson boundary, it is natural to consider the vacuum state $\varphi$  and study the corresponding GNS representation and find the relationship between the
		types of $H^\infty$ and the weight $\omega = \{ \omega_1, \omega_2, \cdots, \omega_{\dim \mathcal{H}}\}$ associated with the Makrkov operator $P_\omega$. We also point out that our study of the GNS representation of
		the Cuntz algebra $\mathcal{O}_{\dim \mathcal{H}}$ (generated by $\{r_i
			: 1 \leq i \leq \dim \mathcal{H}\}$) with respect to the vacuum state
		$\varphi$ and subsequently, Connes’ classification of the von Neumann algebra $\mathcal{O}_{\dim \mathcal{H}}^{\prime \prime}$ follows a fairly
		standard path and thus seems quite natural. Further, in our context we also provide additional informations of the type of $H^\infty$
		immediately after this remark .
		
	\end{remark}

		We conclude this section by showing that in the case when $\mathcal{H}$ is finite-dimensional, if $H^{\infty}$ is of type $III_{\lambda}$ for some rational $\lambda \in (0,1)$, then $\lambda$ must belong to the set $\{\frac{1}{k}:k\in \N\}$. For the rest of the section, $\mathcal{H}$ denotes a finite-dimensional Hilbert space, say, $\dim \mathcal{H}=n>1$ and $\omega=\{\omega_1, \cdots ,\omega_n\}$ so that $\Theta=\{1,2,\cdots ,n\}$. In order to prove the result, we need the following lemma.

		\begin{lemma} \label{rational}
			Fix $0<\lambda<1$.  Then the following are equivalent: 
			\begin{enumerate}
				\item There exist $\{ k_i:  i \in \Theta\} \subseteq \N$ with 	$gcd\{k_i:i\in \Theta\}=1$ and $       \overset{}{\underset{i\in \Theta}\sum}  \lambda^{k_i}=1$.  
				\item There exist $\{ c_i \in (0, 1) : ~~ i \in  \Theta \text{ and } \overset{}{\underset{i\in \Theta}\sum} c_i =  1 \}$ such that $ \{ \lambda^k:k\in \Z\} = \{ \frac{c_I}{c_J}: I,J\in \Lambda\}$ where $c_{()}=1$ and $c_I=c_{i_1}\cdots c_{i_m}$ for $I=i_1\cdots i_m\in \Lambda^*$.
			\end{enumerate}
		\end{lemma}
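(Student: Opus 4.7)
The plan is to prove the two implications by direct construction, with the arithmetic of exponents doing essentially all the work.

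For $(1)\Rightarrow(2)$, I would simply set $c_i := \lambda^{k_i}$ for each $i\in\Theta$. The hypothesis $\sum_i\lambda^{k_i}=1$ together with $0<\lambda<1$ and $k_i\in\mathbb{N}$ gives $c_i\in(0,1)$ and $\sum_i c_i=1$ immediately. For the set equality, observe that if $I=i_1\cdots i_m\in\Lambda^*$ then $c_I=\lambda^{n_1k_1+\cdots+n_{|\Theta|}k_{|\Theta|}}$ where $n_j$ counts the occurrences of $j$ in $I$; consequently $c_I/c_J=\lambda^{\sum_j(n_j-m_j)k_j}$, and as $I,J$ range over $\Lambda$ the integer tuples $(n_j-m_j)$ range over all of $\mathbb{Z}^{|\Theta|}$. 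Hence the set of exponents appearing is exactly the additive subgroup of $\mathbb{Z}$ generated by $\{k_i:i\in\Theta\}$, which equals $\mathbb{Z}$ precisely because $\gcd\{k_i\}=1$. This yields $\{c_I/c_J:I,J\in\Lambda\}=\{\lambda^k:k\in\mathbb{Z}\}$.

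For $(2)\Rightarrow(1)$, start by taking $I=i$ and $J=()$ in the given equality to conclude that $c_i\in\{\lambda^k:k\in\mathbb{Z}\}$ for each $i\in\Theta$; write $c_i=\lambda^{k_i}$. Since $c_i\in(0,1)$ and $0<\lambda<1$, the integer $k_i$ must be positive, so $k_i\in\mathbb{N}$. Summing gives $\sum_i\lambda^{k_i}=\sum_i c_i=1$, which establishes the second requirement of (1). For the gcd condition, use that $\lambda=\lambda^1$ lies in the set $\{c_I/c_J:I,J\in\Lambda\}$; by the exponent computation from the previous paragraph, this means $1=\sum_j d_jk_j$ for some $d_j\in\mathbb{Z}$, forcing $\gcd\{k_i:i\in\Theta\}=1$.

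Both directions reduce to the same elementary observation about the subgroup of $\mathbb{Z}$ generated by the $k_i$, so there is no genuine obstacle; the only points requiring a bit of care are (a) verifying that the exponents $n_j-m_j$ really can be arbitrary integers (which holds because $\Lambda$ contains sequences of all lengths and all patterns, so any non-negative pair $(n_j,m_j)$ is realized), and (b) keeping straight the convention $c_{()}=1$ so that $c_i$ itself is of the form $c_I/c_J$. With these remarks the proof is a half-page argument.
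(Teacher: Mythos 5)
Your proof is correct and follows essentially the same route as the paper: in both directions one sets $c_i=\lambda^{k_i}$ (resp.\ extracts $k_i$ from $c_i\in\{\lambda^k:k\in\Z\}$), and the set equality reduces to the observation that the exponents occurring in $c_I/c_J$ form exactly the subgroup of $\Z$ generated by the $k_i$, which is all of $\Z$ iff $\gcd\{k_i\}=1$. The paper phrases this via a B\'ezout relation $\sum_i n_ik_i=1$ rather than your subgroup language, but the content is identical.
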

		\begin{proof}
			$(1) \Rightarrow (2)$: 	 For  $i\in \Theta$, set $c_i:=\lambda^{k_i}$. Since $gcd\{k_i:i\in \Theta\}=1$, there exist integers  $\{n_i:i\in\Theta\}$ such that $\overset{}{\underset{i\in \Theta}\sum} n_ik_i=1$.
			Hence,
			\begin{align*}
				\lambda =& \lambda^{ \big(\sum_{ i \in \Theta}  n_ik_i \big)}
				=\prod_{ i \in \Theta} \big(\lambda^{k_i}\big)^{n_i}
				=\prod_{ i \in \Theta} {c_i}^{n_i}.
			\end{align*}
			Consequently, $\lambda\in \{\frac{c_I}{c_J}: I,J\in \Lambda \}$ and thus, it follows that  $\{ \frac{c_I}{c_J}: I,J\in \Lambda\}=\{ \lambda^k:k\in \Z\}$.\\ 
			$(2) \Rightarrow (1)$: 
			Since for $i\in \Theta$, $c_i\in \{ \frac{c_I}{c_J}: I,J\in \Lambda\}=\{\lambda^k:k\in \Z\}$, we have $c_i=\lambda^{k_i}$ for some $k_i\in \Z$. As $0<c_i<1$, $k_i\in \N$. Note that  if $gcd\{k_i:i\in \Theta\}>1$, then $\lambda\notin \{ \frac{c_I}{c_J}: I,J\in \Lambda\}$, leading to a contradiction. Thus, $gcd\{k_i:i\in \Theta\}=1$
		\end{proof}
		As an immediate consequence of the preceding lemma we obtain that:
		\begin{coro}\label{type-rational}
			If $H^{\infty}$ is of type $III_{\lambda}$ for some real number $\lambda\in (0,1)$, then $\lambda$ is algebraic. Moreover, if $  \lambda$ is rational, then $\lambda=\frac{1}{k}$ for some natural number $k>1$. 
		\end{coro}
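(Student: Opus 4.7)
The plan is to leverage Lemma \ref{rational} to reduce the statement to an elementary number-theoretic fact about an identity of the form $\sum_i \lambda^{k_i} = 1$ with coprime exponents $k_i$. First, I invoke Theorem \ref{type III}: if $H^\infty$ is of type $III_\lambda$, then the closed subgroup $G \subset \mathbb{R}_+^*$ generated by $\omega_1, \ldots, \omega_n$ is the cyclic group $\{\lambda^k : k \in \mathbb{Z}\}$. Since an element of $G$ is a finite integer product of $\omega_i$'s, $G$ coincides with the ratio set $\{\omega_I/\omega_J : I, J \in \Lambda\}$. Thus condition (2) of Lemma \ref{rational} holds with $c_i = \omega_i$, and condition (1) supplies natural numbers $k_1, \ldots, k_n$ with $\gcd(k_1, \ldots, k_n) = 1$, $\omega_i = \lambda^{k_i}$, and $\sum_{i=1}^n \lambda^{k_i} = 1$.

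Algebraicity of $\lambda$ is then immediate: $\lambda$ is a root of the nonzero integer polynomial
\[
P(x) = x^{k_1} + x^{k_2} + \cdots + x^{k_n} - 1 \in \mathbb{Z}[x].
\]

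For the rational case, I would write $\lambda = p/q$ in lowest terms with $p, q \in \mathbb{N}$ and $\gcd(p, q) = 1$. Clearing denominators in $P(\lambda) = 0$ with $K := \max_i k_i$ yields the integer identity
\[
\sum_{i=1}^n p^{k_i}\, q^{K - k_i} = q^K.
\]
Since every $k_i \geq 1$ (indeed $\omega_i \in (0,1)$ forces $k_i > 0$), every term on the left-hand side is divisible by $p$; hence $p \mid q^K$. Coprimality of $p$ and $q$ then forces $p = 1$, so $\lambda = 1/q$ for some natural number $q \geq 2$.

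The entire argument amounts to a direct application of Lemma \ref{rational} followed by a one-line reduction modulo $p$. The only point requiring any care is the identification of the group $G$ from Theorem \ref{type III} with the ratio set appearing in Lemma \ref{rational}, which is automatic because $G$, being discrete and cyclic, is already closed in $\mathbb{R}_+^*$; I expect this to be the mildest of technicalities rather than a real obstacle.
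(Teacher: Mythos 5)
Your proposal is correct and follows essentially the same route as the paper: invoke Theorem \ref{type III} to identify the ratio set $\{\omega_I/\omega_J\}$ with $\{\lambda^k : k \in \Z\}$, apply Lemma \ref{rational} to produce coprime positive exponents $k_i$ with $\sum_i \lambda^{k_i}=1$, and then clear denominators and reduce modulo $p$ to force $p=1$. The only cosmetic difference is that you clear denominators with $K=\max_i k_i$ where the paper uses $k=\sum_i k_i$; both give the same divisibility contradiction.
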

		\begin{proof}
			If $H^{\infty}$ is of type $III_{\lambda}$ for some real $\lambda\in 
			(0,1)$, it follows from Theorem \ref{type III} that, $\{\frac{\omega_{I}}{\omega_J}:I,J\in \Lambda\}=\{ \lambda^k:k\in \Z\}$ and then an appeal to Lemma \ref{rational} shows that there exist positive integers $\{k_i:i\in \Theta\}$ satisfying $\sum_{i\in \Theta}^{}\lambda^{k_i}=1$  and $gcd\{k_i:i\in \Theta\}=1$. In particular, $\lambda$ is algebraic. If $\lambda$ is rational, say, $\lambda=\frac{p}{q}$ where $p, q \in \N$ and  $gcd(p,q)=1$, then $\big(\frac{p}{q}\big)^{k_1}+\big(\frac{p}{q}\big)^{k_2}+\cdots +\big(\frac{p}{q}\big)^{k_n}=1$, i.e., $p^{k_1}q^{k-k_1}+p^{k_2}q^{k-k_2}+\cdots +p^{k_n}q^{k-k_n}=q^k$ where $k=k_1+k_2+\cdots +k_n$. If $p>1$, the left hand side is divisible by $p$ whereas the right hand side is not divisible by $p$ which is a contradiction and hence, $p=1$. 
		\end{proof}
		\begin{remark}
			Continuing with the setting of Theorem \ref{type III}, we will discuss more concretely regarding the  possible types of  $H^\infty$. 
			\begin{enumerate}
				\item Let $\dim \mathcal{H} = 2$ and let $\omega_1 = \frac{1}{3}, \omega_2 = \frac{2}{3}$. A little thought should convince the reader that, in this case, the group $\mathbb{G}$ generated by $\{\frac{1}{3}, \frac{2}{3}\}$ is $\R_+^*$ and hence, in this case, $H^\infty$ is of type $III_1$.\\
				\item 
				It is also possible that $H^\infty$ is a factor of type $III_\lambda$ for some irrational $\lambda \in (0, 1)$. Let us produce such an example. Note that the equation $x^2 + x -1 = 0$ has an irrational solution in $(0, 1)$, say, $\lambda$. If we let $\dim \mathcal{H} = 2$ and $\omega_1 = \lambda, \omega_2 = 1 - \lambda$, then the subgroup $\mathbb{G}$ generated by $\{\lambda, 1-\lambda(= \lambda^2)\}$ is clearly $\{\lambda^n: n \in \mathbb{Z} \}$ so that in this case $H^\infty$ is of type $III_\lambda$. \\
				\item  $H^\infty$ can never be of type ${III}_0$ in the case when $\mathcal{H}$ is finite-dimensional.  
			\end{enumerate}
		\end{remark}

	\section{Automorphism induced by second quantisation}
	In this section we deal with \textbf{Question $1$} stated in the introduction preceding the statement of Theorem C. We answer \textbf{Question $1$} in the affirmative in the case when $\mathcal{H}$ is finite-dimensional with $\dim \mathcal{H} > 1$, and $\omega$ is the constant sequence $\frac{1}{\dim \mathcal{H}}$.
	

	
	Given a unitary $U$ on $\mathcal{H}$, recall from Section $3$ the corresponding second quantization $\Gamma_U$ on $\mathcal{F}(\mathcal{H})$ and the associated automorphism $\widetilde{\Gamma}_U$ of $B(\mathcal{F}(\mathcal{H}))$. First note that $\widetilde{\Gamma}_U$ does not necessarily leave $H^{\infty}$ invariant and moreover, since the multiplication in the von Neumann algebra $H^{\infty}$ is different from that of $B(\mathcal{F}(\mathcal{H}))$, possibly one should not expect that the restriction of $\widetilde{\Gamma}_U$ to $H^{\infty}$ would induce an automorphism of $H^{\infty}$. To see this, let us consider the following simple example. Let $\omega = (\omega_i)_{i \in \Theta}$ denote a sequence of positive real numbers
	such that $\sum_{i \in \Theta} \omega_i = 1$
	and not all the $\omega_i$'s are equal. Let $i_0, j_0 \in \Theta$, $i_0 \neq j_0$, be such that $\omega_{i_0} \neq \omega_{j_0}$. Consider the unitary $U$ on $\mathcal{H}$ defined as follows: $$ U(e_{i_0}) = e_{j_0}, U(e_{j_0}) = e_{i_0} \ \text{and} \  U(e_t) = e_t, \ \text{for all} \ t \in \Theta \setminus \{i_0, j_0\}.$$ 
	It follows from Proposition \ref{multiplications}(iv) that
	$r_{i_0} \circ r_{j_0}^* = r_{i_0} r_{j_0}^* + \omega_{i_0} p_{\Omega} r_{j_0}^* l_{i_0}$ and hence, an appeal to Equation \eqref{eqquan} shows that
	$$\widetilde{\Gamma}_U \big(r_{i_0} \circ r_{j_0}^* \big) = \widetilde{\Gamma}_U \big( r_{i_0} r_{j_0}^* + \omega_{i_0} p_{\Omega} r_{j_0}^* l_{i_0} \big) = r_{j_0} r_{i_0}^* + \omega_{i_0} p_{\Omega} r_{i_0}^* l_{j_0}.$$
	On the other hand,
	$$ \widetilde{\Gamma}_U(r_{i_0}) \circ \widetilde{\Gamma}_U(r_{j_0}) = r_{j_0} \circ r_{i_0}^* = r_{j_0} r_{i_0}^* + \omega_{j_0} p_{\Omega} r_{i_0}^* l_{j_0}.$$ 
	Since $\omega_{i_o} \neq \omega_{j_0}$, it is clear that 
	$\widetilde{\Gamma}_U \big(r_{i_0} \circ r_{j_0}^* \big) \neq
	\widetilde{\Gamma}_U(r_{i_0}) \circ \widetilde{\Gamma}_U(r_{j_0})$. This example shows that if $\omega = (\omega_i)$ is not a constant sequence, then it is possible to construct a a unitary $U$ on $\mathcal{H}$ such that $\widetilde{\Gamma}_U$ fails to induce an automorphism of $H^{\infty}$. Thus arises the following natural question:

	\textit{What happens in the case when $\mathcal{H}$ is finite-dimensional with $\dim \mathcal{H} = n > 1$ and \  $\omega$ is the constant sequence $\frac{1}{n}$, that is, $\omega_1 = \omega_2 = \cdots = \omega_n = \frac{1}{n}?$}
	
	The main result of this section, namely, Theorem \ref{quan}, answers the above question in the affirmative by proving that if $\mathcal{H}$ is finite-dimensional with $\dim \mathcal{H} > 1$ and \  $\omega$ is the constant sequence $\frac{1}{\dim \mathcal{H}}$, then the restriction of $\widetilde{\Gamma}_U$ to $H^{\infty}$ is indeed an automorphism of $H^{\infty}$. We are grateful to the anonymous referee(s) for pointing out to us a simple proof of the result.
	
	\textit{Throughout the rest of this section, $\mathcal{H}$ denotes an $n$-dimensional Hilbert space, $n > 1$, with the orthonormal basis $\{e_1, e_2, \cdots, e_n\}$ and $\omega_1 = \omega_2 = \cdots = \omega_n = \frac{1}{n}$.}
	
	Given a unitary $U$ on $\mathcal{H}$, set $f_i = U(e_i), 1 \leq i \leq n$, and consider the Markov operator $P_\omega^\prime$ on $B \big(\mathcal{F}(\mathcal{H})\big)$ defined by $$P_\omega^\prime(x) = \frac{1}{n} \sum_{i = 1}^n l_{f_i}^* x l_{f_i}, x \in B \big(\mathcal{F}(\mathcal{H})\big)$$ and let $\widetilde{H^\infty_n}$ denote the Poisson boundary associated with $\Big(B \big(\mathcal{F}(\mathcal{H})\big), P_\omega^\prime \Big)$. It follows from the discussion in Section $3$ preceding Proposition \ref{multiplications} that $H^\infty_n \ni x \mapsto \widetilde{\Gamma}_U(x) \in \widetilde{H^\infty_n}$ is an isomorphism of von Neumann algebras. We now aim to show that $H^\infty_n = \widetilde{H^\infty_n}$. Let $\circ^\prime$ denote the multiplication of $\widetilde{H^\infty_n}$. For each $i, 1 \leq i \leq n$, express $f_i = \sum_{j = 1}^{n} u_{i j} e_j$. Then $(u_{ij})$ is a unitary matrix, and hence, $\sum_{k = 1}^n \overline{u_{ki}} u_{kj} = \delta_{i, j}$. Then, given any $x \in B \big(\mathcal{F}(\mathcal{H})\big)$, we see that
	$$P_\omega^\prime(x) = \frac{1}{n} \sum_{i = 1}^n l_{f_i}^* x l_{f_i} = \frac{1}{n} \sum_{i = 1}^n \Big(\sum_{j, k = 1}^n \overline{u_{ij}} u_{ik} l_{e_j}^* x l_{e_k}\Big) 
	 = \frac{1}{n} \sum_{j= 1}^n l_{e_j}^* x l_{e_j} = P_\omega(x).$$
	 It now follows immediately by an appeal to Equation \eqref{formu} that $\big(H^\infty_n, \circ \big) = \big(\widetilde{H^\infty_n}, \circ^\prime \big)$.	 
	We have thus shown that:
	\begin{coro}\label{final}
		With notations as above, $\widetilde{\Gamma}_U|_{H^\infty_n}$ is an automorphism of $H^\infty_n$.
	\end{coro}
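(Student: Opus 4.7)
The plan is to simply combine the two ingredients already laid out in the paragraph preceding the corollary. First, the generic basis-change remark from Section~3 (the $V$-construction just before Proposition~\ref{multiplications}) produces, for any unitary $U$ on $\mathcal{H}$, a von Neumann algebra isomorphism
\[
\widetilde{\Gamma}_U : H^{\infty}\!\bigl(B(\mathcal{F}(\mathcal{H})),P_{\omega}\bigr) \longrightarrow H^{\infty}\!\bigl(B(\mathcal{F}(\mathcal{H})),P_{\omega}^{\prime}\bigr) = \widetilde{H^{\infty}_n},
\]
where $P_{\omega}^{\prime}$ is the Markov operator attached to the orthonormal basis $\{f_i = Ue_i\}$. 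This is the first half of the argument and needs nothing beyond what is recorded earlier.

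The second half is to observe that, because the weights are constant equal to $1/n$, the two Markov operators actually coincide: $P_{\omega}^{\prime} = P_{\omega}$ on $B(\mathcal{F}(\mathcal{H}))$. I would verify this by expanding $f_i = \sum_j u_{ij} e_j$, noting that $(u_{ij})$ is a unitary matrix, and computing
\[
P_{\omega}^{\prime}(x) = \frac{1}{n}\sum_i l_{f_i}^* x\, l_{f_i} = \frac{1}{n}\sum_{i,j,k} \overline{u_{ij}}\,u_{ik}\, l_{e_j}^* x\, l_{e_k} = \frac{1}{n}\sum_j l_{e_j}^* x\, l_{e_j} = P_{\omega}(x),
\]
where the collapse $\sum_i \overline{u_{ij}} u_{ik} = \delta_{jk}$ is the only computational input. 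Since the Choi--Effros product is determined by the Markov operator (via Lemma~\ref{formula} or Equation~\eqref{formu}), the equality $P_{\omega}^{\prime} = P_{\omega}$ upgrades to an equality of von Neumann algebras $(\widetilde{H^{\infty}_n}, \circ^{\prime}) = (H^{\infty}_n, \circ)$.

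Putting the two halves together, $\widetilde{\Gamma}_U$ becomes a von Neumann algebra isomorphism from $H^{\infty}_n$ onto itself, i.e., an automorphism of $H^{\infty}_n$. There is no genuine obstacle here: the only step with any content is the cancellation via unitarity in the Markov operator, and the constant weight hypothesis is precisely what makes that cancellation go through. The example with $\omega_{i_0} \neq \omega_{j_0}$ given earlier in the section shows that this hypothesis cannot be relaxed, so the heart of the matter is really just recognizing that constant weights make $P_\omega$ intrinsic to $\mathcal{H}$ rather than to the chosen basis.
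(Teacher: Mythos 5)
Your proposal is correct and follows essentially the same route as the paper: the basis-change isomorphism $\widetilde{\Gamma}_U : H^{\infty}_n \to \widetilde{H^{\infty}_n}$ from Section~3, followed by the unitarity cancellation $\sum_i \overline{u_{ij}}u_{ik}=\delta_{jk}$ showing $P_{\omega}^{\prime}=P_{\omega}$ when the weights are constant, hence $\bigl(H^{\infty}_n,\circ\bigr)=\bigl(\widetilde{H^{\infty}_n},\circ^{\prime}\bigr)$. Nothing to add.
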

	
	\begin{remark}\label{quanrmk}
		Let $\mathcal{C}$ denote the set $\{r_{\xi}: \xi \in \mathcal{H}\}$. We define an automorphism of $H^{\infty}_n$ to be $\mathcal{C}$-preserving if it maps the set $\mathcal{C}$ onto itself. Let $\mathbb{G}$ denote the subgroup of Aut($H^{\infty}_n)$, the automorphism group of $H^{\infty}_n$, consisting of all $\mathcal{C}$-preserving automorphisms. Also let $\mathcal{U}(\mathcal{H})$ denote the unitary group of $\mathcal{H}$. It follows from Corollary \ref{final} that for any $U \in \mathcal{U}(\mathcal{H})$, $\widetilde{\Gamma}_U|_{H^\infty_n}$ is an element of $\mathbb{G}$. We assert that the map $$\mathcal{U}(\mathcal{H}) \ni U \mapsto \widetilde{\Gamma}_U|_{H^\infty_n} \in \mathbb{G}$$ establishes an isomorphism of groups. One can easily see that this is an injective group homomorphism. We now prove surjectivity of the map. Let $\Psi \in \mathbb{G}$. Since $\Psi$ is $\mathcal{C}$-preserving, for each $\xi \in \mathcal{H}$, there exists unique $\eta \in \mathcal{H}$ such that $\Psi(r_{\xi}) = r_{\eta}$. This allows us to define a linear map $U : \mathcal{H} \rightarrow \mathcal{H}$ as follows: For $\xi \in \mathcal{H}$, define $U(\xi)$ to be the element of $\mathcal{H}$ such that $r_{U(\xi)} = \Psi(r_{\xi})$. To show that $U \in \mathcal{U}(\mathcal{H})$, it suffices to see that $\{U(e_i): 1 \leq i \leq n\}$ is an orthonormal basis of $\mathcal{H}$ where, recall that, $\{e_i: 1 \leq i \leq n\}$ is an orthonormal basis of $\mathcal{H}$. As $\Psi(r_{e_i}) = r_{U(e_i)}, 1 \leq i \leq n$, we have that $$\langle U(e_i), U(e_j)\rangle = \langle r_{U(e_i)}\Omega, r_{U(e_j)}\Omega\rangle = \langle \Psi(r_{e_i})\Omega, \Psi(r_{e_j})\Omega\rangle = \delta_{i, j}.$$ Hence, $U \in \mathcal{U}(\mathcal{H})$ and  we conclude that $\Psi = \widetilde{\Gamma}_U|_{H^\infty_n}$. Thus, \textit{$\mathcal{U}(\mathcal{H})$ and $\mathbb{G}$ are isomorphic as groups}, proving the assertion.
	\end{remark}
	
	Summarizing the foregoing discussions, we have the following theorem.
	\begin{theorem}\label{quan}
		Let $\mathcal{H}$ be finite-dimensional with $\dim \mathcal{H} > 1$, and let $\omega$ be the constant sequence $\frac{1}{\dim \mathcal{H}}$. 
		For each unitary $U$ on $\mathcal{H}$, $\widetilde{\Gamma}_U|_{H^\infty_{\dim \mathcal{H}}}$ is the unique automorphism of $H^\infty_{\dim \mathcal{H}}$ that takes $r_{\xi}$ to $r_{U \xi}$ for $\xi \in \mathcal{H}$. Further, the mapping $$\mathcal{U}(\mathcal{H}) \ni U \mapsto \widetilde{\Gamma}_U|_{H^\infty_{\dim \mathcal{H}}} \in \text{Aut}(H^{\infty}_{\dim \mathcal{H}})$$ of the unitary group $\mathcal{U}(\mathcal{H})$ of $\mathcal{H}$ to $\text{Aut}(H^{\infty}_{\dim \mathcal{H}})$, the automorphism group of $H^{\infty}_{\dim \mathcal{H}}$, is an injective group homomorphism.
	\end{theorem}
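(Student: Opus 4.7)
The plan is to exploit the observation that, when $\omega$ is the constant sequence $\frac{1}{n}$ with $n = \dim \mathcal{H}$, the Markov operator $P_{\omega}$ is \emph{intrinsically basis-independent}. Given a unitary $U$ on $\mathcal{H}$, I would set $f_i := U(e_i)$ for $1 \leq i \leq n$ and define the companion Markov operator $P_{\omega}^\prime(x) = \frac{1}{n}\sum_{i=1}^n l_{f_i}^* x l_{f_i}$. Writing $f_i = \sum_j u_{ij} e_j$ with $(u_{ij}) \in \mathcal{U}(n)$ and using the relation $\sum_i \overline{u_{ij}} u_{ik} = \delta_{jk}$, a short calculation would show $P_{\omega}^\prime = P_{\omega}$. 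Since the Choi--Effros product on the Poisson boundary is governed by iterates of the Markov operator (Lemma \ref{formula}), this equality identifies $H^\infty_n$ with its companion Poisson boundary $\widetilde{H^\infty_n}$ not merely up to isomorphism but as the very same von Neumann algebra (with the same multiplication).

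Next I would invoke the basis-change isomorphism summarized at the end of Section~3: the assignment $x \mapsto \widetilde{\Gamma}_U(x)$ is a $\ast$-algebra isomorphism between $H^\infty\bigl(B(\mathcal{F}(\mathcal{H})), P_{\omega}\bigr)$ and $H^\infty\bigl(B(\mathcal{F}(\mathcal{H})), P_{\omega}^\prime\bigr)$. Combined with the equality $H^\infty_n = \widetilde{H^\infty_n}$ obtained above, this upgrades $\widetilde{\Gamma}_U\big|_{H^\infty_n}$ to an automorphism of $H^\infty_n$. The intertwining formula $\widetilde{\Gamma}_U(r_\xi) = r_{U\xi}$ of Equation \eqref{eqquan} then yields the required automorphism $\Psi_U$. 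Uniqueness is an immediate consequence of Proposition \ref{generator}: any $\ast$-automorphism of $H^\infty_n$ is determined by its values on the generators $\{r_1, \dots, r_n\}$.

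For the group homomorphism statement, I would rely on functoriality of second quantization: $\Gamma_{UV} = \Gamma_U \Gamma_V$ yields $\widetilde{\Gamma}_{UV} = \widetilde{\Gamma}_U \widetilde{\Gamma}_V$ on $B(\mathcal{F}(\mathcal{H}))$, which restricts to the corresponding identity on $H^\infty_n$. Injectivity follows by evaluating: if $\Psi_U$ is the identity on $H^\infty_n$, then $r_{U\xi} = r_\xi$ for every $\xi \in \mathcal{H}$, and applying both sides to the vacuum vector $\Omega$ gives $U\xi = \xi$ for all $\xi$, hence $U = I_{\mathcal{H}}$.

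I do not anticipate a serious obstacle: the only genuine verification is the identity $P_{\omega}^\prime = P_{\omega}$, which reduces to a routine double-sum manipulation enabled by the equality of all weights. Everything else is formal, being built from the already-established basis-independence of the Poisson boundary, the generating property of the right creation operators (Proposition \ref{generator}), and the functoriality of second quantization. The constant-weight hypothesis is used precisely at this one step, and the counterexample in the paragraph preceding the theorem (where unequal weights produce a discrepancy between $\widetilde{\Gamma}_U(r_{i_0}\circ r_{j_0}^*)$ and $\widetilde{\Gamma}_U(r_{i_0})\circ\widetilde{\Gamma}_U(r_{j_0})$) clarifies why this hypothesis cannot be dropped.
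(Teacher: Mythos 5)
Your proposal is correct and follows essentially the same route as the paper: the key step in both is the computation $P_{\omega}^\prime = P_{\omega}$ via the unitarity relation for the change-of-basis matrix (valid precisely because the weights are all $\frac{1}{n}$), which identifies the two Poisson boundaries as the same algebra with the same Choi--Effros product, after which the basis-change isomorphism of Section~3 upgrades $\widetilde{\Gamma}_U|_{H^\infty_n}$ to an automorphism. The uniqueness and injectivity arguments you give (via Proposition \ref{generator} and evaluation at the vacuum) match those in the paper's Remark \ref{quanrmk}.
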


\subsection*{Acknowledgements} The authors are grateful to the anonymous referee(s) for an extremely careful reading of an earlier version of the manuscript and several insightful comments and suggestions, in particular, for making us aware of the study of the KMS states on the Cuntz algebra $\mathcal{O}_n$ ($n \in \mathbb{N}$) and their GNS representations, especially, for pointing out the references \cite{DEvans} and \cite{Iz4}, which has helped in substantial improvement of the presentation. The first author thanks J C Bose Fellowship of SERB (India) for financial support. 
P. Bikram acknowledges the support of the grant CEFIPRA-6101-1 and the fourth author is supported by the NBHM (India) post-doctoral fellowship.

\end{document}